\newtheorem{cor}{Corollary}
\newtheorem{lem}{Lemma}
\newtheorem{prop}{Proposition}
\newtheorem{thm}{Theorem}
\newtheorem{conj}{Conjecture}
\theoremstyle{definition}
\newtheorem{defn}{Definition}
\newtheorem{eg}{Example}
\newtheorem{qn}{Question}
\newtheorem{rmk}{Remark}
\newcommand{\qbin}[2]{\begin{bmatrix}{#1}\\ {#2}\end{bmatrix}}
\newenvironment{psmallmatrix}
  {\left(\begin{smallmatrix}}
  {\end{smallmatrix}\right)}
\newcommand{\n}{\mathord{\includegraphics[height=1.6ex]{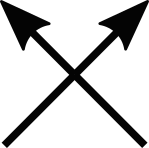}}}
\newcommand{\E}{\mathord{\includegraphics[height=1.6ex]{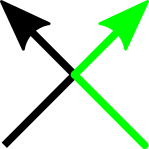}}}
\newcommand{\W}{\mathord{\includegraphics[height=1.6ex]{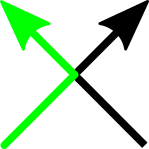}}}
\newcommand{\NW}{\mathord{\includegraphics[height=1.6ex]{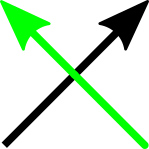}}}
\newcommand{\NE}{\mathord{\includegraphics[height=1.6ex]{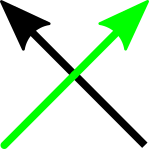}}}
\newcommand{\X}{\mathord{\includegraphics[height=1.6ex]{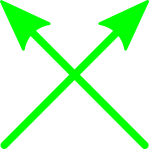}}}
\title{Inverted state sums, inverted Habiro series, and indefinite theta functions}
\author{Sunghyuk Park}
\address{Division of Physics, Mathematics and Astronomy,
California Institute of Technology,
1200 E. California Blvd.,
Pasadena,
CA 91125}
\email{spark3@caltech.edu}
\thanks{}
\begin{document}

\maketitle
\begin{abstract}
S. Gukov and C. Manolescu conjectured that the Melvin-Morton-Rozansky expansion of the colored Jones polynomials can be re-summed into a two-variable series $F_K(x,q)$, which is the knot complement version of the $3$-manifold invariant $\hat{Z}$ whose existence was predicted earlier by S. Gukov, P. Putrov and C. Vafa. In this paper we use an inverted version of the $R$-matrix state sum to prove this conjecture for a big class of links that includes all homogeneous braid links as well as all fibered knots up to 10 crossings. We also study an inverted version of Habiro's cyclotomic series that leads to a new perspective on $F_K$ and discovery of some regularized surgery formulas relating $F_K$ with $\hat{Z}$. These regularized surgery formulas are then used to deduce expressions of $\hat{Z}$ for some plumbed $3$-manifolds in terms of indefinite theta functions. 
\end{abstract}
\tableofcontents

\section{Introduction}
$\hat{Z}$ is a $q$-series-valued invariant of $3$-manifolds introduced by \cite{GPV,GPPV} from the physics of supersymmetric quantum field theories. It is a fascinating object as it is supposed to play an important role in categorification of quantum 3-manifold invariants \cite{GPPV}, and it is connected to various fields of mathematics and physics such as complex Chern-Simons theory and resurgence \cite{GMP}, quantum modular forms and logarithmic vertex algebras \cite{CCFGH}, holomorphic curve counting \cite{EGGKPS}, non-semisimple invariants and geometric representation theory \cite{GPNPPS}, and so on. 
From the purely mathematical point of view, it has been a mysterious object as well, since a fully general mathematical definition of $\hat{Z}$ is not available at the time of writing. 

An important step toward finding a mathematical home for $\hat{Z}$ was made by \cite{GM}. In their paper, Gukov and Manolescu outlined an approach to study $\hat{Z}$ by cutting and gluing. They denoted $\hat{Z}$ for the complement of a knot $K$ in $S^3$ by $F_K$. That is, 
\[
F_K := \hat{Z}(S^3\setminus K).
\]
It is a series in two variables, $x$ and $q$, and they conjectured that it can be obtained by re-summing the famous Melvin-Morton-Rozansky expansion \cite{MelvinMorton, BarNatanGaroufalidis, R} of the colored Jones polynomials, which is a power series in $\hbar = \log q$ or $q-1$, into a series in $q$. They also conjectured a set of surgery formulas that relate these $F_K$ with $\hat{Z}$ for the $3$-manifolds obtained by some Dehn surgeries on $K$. The motivation for this work was to understand $F_K$ and the surgery formulas better. 

This paper can be thought of as a sequel to a previous work by the author \cite{Park}. 
In \cite{Park}, we studied $F_K$ by using what we called the \emph{large-color $R$-matrix}. It is the large-color limit of the $R$-matrix for the $n$-colored Jones polynomials. In the limit where $n$ goes to $\infty$ while we keep $x:= q^n$ fixed, the finite-dimensional irreducible representations of $U_q(\mathfrak{sl}_2)$ become infinite dimensional Verma modules, and the large-color $R$-matrix determines the braiding of these Verma modules. 
The infinite-dimensionality of the modules poses a challenge that did not exist when $n$ was finite: The state sum may not converge. By convergence, we mean convergence as a power series in $x$ (or $x^{-1}$) and $q$. Of course there are ways to get around the convergence issue by working in a different ring such as Habiro ring and its variants \cite{Hab, Hab2, Willetts}. This leads to a beautiful and rich story in itself, but unfortunately this is not what we want for $F_K$, since $F_K$ is an ordinary power series in $x$ and $q$; we would like to keep $q$ and $x$ generic.  
The main result of \cite{Park} was that for positive braid knots, the state sum determined by the large-color $R$-matrices converges absolutely as a power series in $x^{-1}$ and $q$. In this paper, this result is extended to a much bigger class of knots and links that includes homogeneous braid links. This is done with the help of a trick that we call ``inversion". 

Inversion of a series can be best described through the following motivating example:
\[
\sum_{n\geq 0}z^n = \frac{1}{1-z} = -\frac{z^{-1}}{1-z^{-1}} = -\sum_{n<0}z^n.
\]
On the very left-hand side, we have a geometric series in $z$, which converges as long as $|z|<1$. The first equality says that the rational function $\frac{1}{1-z} = -\frac{z^{-1}}{1-z^{-1}}$, which is defined both inside and outside the unit disk, agrees with the first series inside the unit disk. On the very right-hand side, we have a power series in $z^{-1}$ that agrees with the same rational function outside the unit disk. 
In the end, we have an ``equality" between the series in the very left-hand side and the one on the very right-hand side, even though their domains of convergence are different. Note also that, up to sign, we only had to ``invert" the range of summation of $n$ from the set of non-negative integers to the set of negative integers, to go from the first series to the second one. This simple observation turns out to be an extremely useful one in our study of $F_K$. In fact, we apply this inversion technique in two different contexts, firstly to the state sum and secondly to the Habiro series. They both lead to $F_K$, so they provide two new ways to understand $F_K$.

\addtocontents{toc}{\protect\setcounter{tocdepth}{1}}
\subsection*{Organization of the paper}
This paper is divided into three parts: Section \ref{sec:invertedStateSums} on inverted state sums, Section \ref{sec:invertedHabiro} on inverted Habiro series, and Section \ref{sec:indefiniteTheta} on their connection to indefinite theta functions. 

In Section \ref{sec:invertedStateSums}, we study inverted state sums. This section contains the main theorem (Theorem \ref{thm:mainTheorem}) of this paper which says that for homogeneous braid links, the Melvin-Morton-Rozansky expansion can be re-summed into a two-variable series and therefore the conjecture of \cite{GM} is true for those links. The homogeneity condition can be weakened as in Theorem \ref{thm:beyondHomogeneousBraids}, and using this theorem we can obtain state sum expressions of $F_K$ for all fibered knots up to 10 crossings. See Appendix \ref{sec:fiberedKnotsInversionData} for the data. 

Section \ref{sec:invertedHabiro} and \ref{sec:indefiniteTheta} are more experimental in nature. In Section \ref{sec:invertedHabiro}, we invert Habiro series and find that inverted Habiro series often provides a nice closed-form expression for $F_K$. What is particularly nice about inverted Habiro series is that they suggest some regularized version of surgery formulas. \cite{GM} contains nice surgery formulas, but when applied naively they would often give a non-converging answer. The regularized surgery formulas we find in Section \ref{sec:invertedHabiro} enable us to study how $\hat{Z}$ behaves under various surgeries even when the surgery formulas of \cite{GM} are not applicable due to non-convergence. 

In Section \ref{sec:indefiniteTheta}, we show how the regularized surgery formulas are related to indefinite theta functions. An approach to $\hat{Z}$ for indefinite plumbed $3$-manifolds using indefinite theta functions was initiated by \cite{CFS}, and what we do in Section \ref{sec:indefiniteTheta} is pretty much in the same spirit. While we only provide a list of examples, we believe they will serve as a good starting point for future research in finding a formula for $\hat{Z}$ for general plumbed $3$-manifolds.

\subsection*{Notations}
Here we collect the notations and conventions we use. 
\[
[n] := \frac{q^{\frac{n}{2}}-q^{-\frac{n}{2}}}{q^{\frac{1}{2}}-q^{-\frac{1}{2}}},\quad 
[n]_q := \frac{1-q^n}{1-q}
\]
\[
[n]! := \prod_{k=1}^{n}[k],\quad 
[n]_q! := \prod_{k=1}^{n}[k]_q
\]
\[
\qbin{n}{k} := \frac{[n]!}{[k!][n-k]!},\quad 
\qbin{n}{k}_q := \frac{[n]_q!}{[k]_q![n-k]_q!} = \frac{(q;q)_n}{(q;q)_k(q;q)_{n-k}}
\]
\[
(x)_n = (x;q)_n := \prod_{k=0}^{n-1}(1-q^k x)
\]
Note, $\qbin{n}{k} = 0 = \qbin{n}{k}_q$ unless $n\geq k\geq 0$ or $0 > n\geq k$ or $k\geq 0 > n$. 

Unless otherwise specified, we work with $U_q(\mathfrak{sl}_2)$.
The highest weight Verma module with the highest weight $\lambda$ is denoted by $V^h_{\infty,\lambda}$, and the lowest weight Verma module with the lowest weight $\lambda$ is denoted by $V^l_{\infty,\lambda}$. Throughout this article, especially in the first section, we take the point of view of the highest weight Verma module $V^h_{\infty,\log_q x-1}$ which leads to a series in $x^{-1}$. If one wants to get a series in $x$, then they simply need to apply the Weyl symmetry $x\leftrightarrow x^{-1}$ to the whole story; that is, start with $V^l_{\infty,-\log_q x +1}$ instead. 

\subsection*{Acknowledgements}
I would like to express my gratitude to Sergei Gukov for encouragement and helpful conversations. I also thank Piotr Kucharski and Piotr Su\l{}kowski for their feedback on an earlier version of the draft. 

This work was partially supported by Kwanjeong Educational Foundation.

\addtocontents{toc}{\protect\setcounter{tocdepth}{2}}
\section{Inverted state sums}\label{sec:invertedStateSums}
\subsection{Extending the $R$-matrix}
In \cite{Park}, we considered the large-color limit of the $R$-matrices for $n$-colored Jones polynomials. The large-color $R$-matrix and its inverse are given by
\begin{align}
\check{R}(x)_{i,j}^{i',j'} &= \delta_{i+j,i'+j'} q^{\frac{j+j'+1}{2}}x^{-\frac{j+j'+1}{2}} q^{jj'} \qbin{i}{j'}_q \prod_{1\leq l\leq i-j'}(1-q^{j+l}x^{-1}),\\
\check{R}^{-1}(x)_{i,j}^{i',j'} &= \delta_{i+j,i'+j'} q^{-\frac{i+i'+1}{2}}x^{\frac{i+i'+1}{2}}q^{-ii'}\qbin{j}{i'}_{q^{-1}} \prod_{1\leq l\leq j-i'}(1-q^{-i-l}x). \nonumber
\end{align}
They correspond to positive and negative crossings, respectively. See Fig. \ref{fig:RandRinv}. Throughout this section, as in the figure, we will use the indices $i, j, i', j'$ to refer to the bottom left, the bottom right, the top left, and the top right strand of a crossing, respectively. 
\begin{figure}[ht]
    \centering
    \includegraphics[scale=0.6]{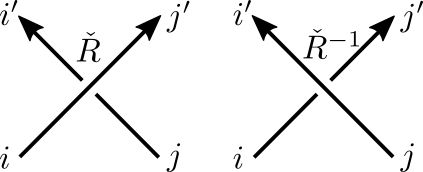}
    \caption{$\check{R}$ and $\check{R}^{-1}$ correspond to positive and negative crossings respectively. }
    \label{fig:RandRinv}
\end{figure}

In the standard state sum, the range of summation for the indices $i,j,i',j'$ is the set of non-negative integers. A crucial observation, however, is that the domain of $i,j,i',j'$ can be naturally extended to the set of all integers, since the $R$-matrix entry makes sense for any integers $i,j,i',j'$. Explicitly, 
\begin{align*}
\check{R}(x)_{i,j}^{i',j'} &= \begin{cases}\delta_{i+j,i'+j'} q^{\frac{j+j'+1}{2}}x^{-\frac{j+j'+1}{2}} q^{jj'} \qbin{i}{i-j'}_q \prod_{1\leq l\leq i-j'}(1-q^{j+l}x^{-1}) &\text{if } \begin{array}{c}i\geq j'\geq 0\\ \text{ or }\\ 0>i\geq j' \end{array}\\ \delta_{i+j,i'+j'} q^{\frac{j+j'+1}{2}}x^{-\frac{j+j'+1}{2}} q^{jj'} \qbin{i}{j'}_q \frac{1}{\prod_{0\leq l\leq j'-i-1}(1-q^{j-l}x^{-1})} &\text{if }j'\geq 0>i \\ 0 &\text{otherwise}\end{cases}
\end{align*}
and
\begin{align*}
\check{R}^{-1}(x)_{i,j}^{i',j'} &= \begin{cases}\delta_{i+j,i'+j'} q^{-\frac{i+i'+1}{2}}x^{\frac{i+i'+1}{2}} q^{-ii'} \qbin{j}{j-i'}_{q^{-1}} \prod_{1\leq l\leq j-i'}(1-q^{-i-l}x) &\text{if } \begin{array}{c}j\geq i'\geq 0\\ \text{ or }\\ 0>j\geq i'\end{array}\\ \delta_{i+j,i'+j'} q^{-\frac{i+i'+1}{2}}x^{\frac{i+i'+1}{2}} q^{-ii'} \qbin{j}{i'}_{q^{-1}} \frac{1}{\prod_{0\leq l\leq i'-j-1}(1-q^{-i+l}x)} &\text{if }i'\geq 0>j \\ 0 &\text{otherwise.}\end{cases}
\end{align*}
As power series in $x^{-1}$, we have
\begin{align}\label{eq:bigO}
\check{R}(x)_{i,j}^{i',j'} &= O(x^{-\frac{j+j'+1}{2}}),\\
\check{R}^{-1}(x)_{i,j}^{i',j'} &= O(x^{\frac{i+i'+1}{2}+(j-i')}) = O(x^{-\frac{i'-i-2j-1}{2}})=O(x^{\frac{j+j'+1}{2}}).\nonumber
\end{align}

When the two strands are different, we can similarly extend the domain. Let's say the $i\rightarrow j'$ strand is assigned with the variable $x$ and $j\rightarrow i'$ strand is assigned with the variable $y$. Then we have
\begin{align*}
\check{R}(x,y)_{i,j}^{i',j'} &= \begin{cases}\delta_{i+j,i'+j'} q^{\frac{j+j'+\frac{1}{2}}{2}}x^{-\frac{i'+j+1}{4}}y^{-\frac{3j'-i+1}{4}} q^{jj'} \qbin{i}{i-j'}_q \prod_{1\leq l\leq i-j'}(1-q^{j+l}y^{-1}) &\text{if } \begin{array}{c}i\geq j'\geq 0\\ \text{ or }\\ 0>i\geq j' \end{array}\\ \delta_{i+j,i'+j'} q^{\frac{j+j'+\frac{1}{2}}{2}}x^{-\frac{i'+j+1}{4}}y^{-\frac{3j'-i+1}{4}} q^{jj'} \qbin{i}{j'}_q  \frac{1}{\prod_{0\leq l\leq j'-i-1}(1-q^{j-l}y^{-1})} &\text{if }j'\geq 0>i \\ 0 &\text{otherwise}\end{cases}
\end{align*}
and
\begin{align*}
\check{R}^{-1}(x,y)_{i,j}^{i',j'} &= \begin{cases}\delta_{i+j,i'+j'} q^{-\frac{i+i'+\frac{1}{2}}{2}}x^{\frac{3i'-j+1}{4}}y^{\frac{j'+i+1}{4}} q^{-ii'} \qbin{j}{j-i'}_{q^{-1}} \prod_{1\leq l\leq j-i'}(1-q^{-i-l}x) &\text{if } \begin{array}{c}j\geq i'\geq 0\\ \text{ or }\\ 0>j\geq i'\end{array}\\ \delta_{i+j,i'+j'} q^{-\frac{i+i'+\frac{1}{2}}{2}}x^{\frac{3i'-j+1}{4}}y^{\frac{j'+i+1}{4}} q^{-ii'} \qbin{j}{i'}_{q^{-1}} \frac{1}{\prod_{0\leq l\leq i'-j-1}(1-q^{-i+l}x)} &\text{if }i'\geq 0>j \\ 0 &\text{otherwise.}\end{cases}
\end{align*}
Note, in our convention, $\check{R}(x) = q^{\frac{1}{4}}\check{R}(x,x)$.

\subsection{Meaning of the extension}
Before extension, `the space of spin states' assigned to each strand of a braid in our state sum model is the highest weight Verma module $V_{\infty,\log_q x-1}^h$ with the highest weight $\lambda = \log_q x-1$. The set of basis vectors $\{v^n\}_{n\geq 0}$ are labelled by non-negative integers, where each $v^n$ is an element of the weight subspace $V_\infty(\lambda-2n)$. In a specific choice of basis used in \cite{Park}, the action of $U_q(\mathfrak{sl}_2)$ is given by
\begin{align*}
    e v^n &= [n]v^{n-1},\\
    f v^{n} &= [\lambda-n]v^{n+1},\\
    q^{\frac{h}{2}}v^{n} &= q^{\frac{\lambda - 2n}{2}}v^{n}.
\end{align*}
We can describe it diagrammatically as
\[\cdots\overset{e}{\underset{f}{\rightleftharpoons}}V_\infty(\lambda-2)\overset{e}{\underset{f}{\rightleftharpoons}} V_\infty(\lambda).\]
Now, extending $n$ to negative values simply means we consider vectors $v^n \in V_\infty(\lambda-2n)$ for $n<0$. That is, the highest weight Verma module is extended to the principal series module
\[\cdots\overset{e}{\underset{f}{\rightleftharpoons}}V_\infty(\lambda-2)\overset{e}{\underset{f}{\rightleftharpoons}} V_\infty(\lambda)
\overset{e}{\underset{f}{\rightleftharpoons}}V_\infty(\lambda+2)\overset{e}{\underset{f}{\rightleftharpoons}}V_\infty(\lambda+4)\overset{e}{\underset{f}{\rightleftharpoons}}\cdots.\]
The new upper-half part of this module,
\[V_\infty(\lambda+2)\overset{e}{\underset{f}{\rightleftharpoons}}V_\infty(\lambda+4)\overset{e}{\underset{f}{\rightleftharpoons}}\cdots,\]
which can be seen as the quotient of the principal series by the highest weight module, is actually the lowest weight Verma module $V^l_{\infty,\log_q x+1}$ with the lowest weight $\lambda+2 = \log_q x+1$. 
To see this, observe that, for $n\geq 0$,
\begin{align*}
    e v^{-n-1} &= [-n-1]v^{-(n+1)-1},\\
    f v^{-n-1} &= [\lambda+n+1]v^{-(n-1)-1},\\
    q^{\frac{h}{2}}v^{-n-1} &= q^{\frac{(\lambda+2) + 2n}{2}}v^{-n-1}.
\end{align*}
If we rescale the basis and define
\[
v_n = \frac{[n]!}{\prod_{k=0}^{n-1}[\lambda+2+k]} v^{-n-1},
\]
then 
\begin{align*}
    e v_n &= [-(\lambda+2)-n]v_{n+1},\\
    f v_n &= [n]v_{n-1},\\
    q^{\frac{h}{2}}v_n &= q^{\frac{(\lambda+2) + 2n}{2}}v_n,
\end{align*}
which agrees with the choice of basis for the lowest weight Verma module used in \cite{Park}.

\subsection{Inverted state sum and homogeneous braid links}
Since we want to get a power series in $x^{-1}$ as a result of the state sum, from \eqref{eq:bigO} we see that we would like $j,j'$ for the positive crossings to be big, and we would like $j,j'$ for the negative crossings to be small. For this purpose, we can try using the 4 types of crossings in Fig. \ref{fig:4basicblocks} as the basic building blocks. 
\begin{figure}[ht]
    \centering
    \includegraphics[scale=0.6]{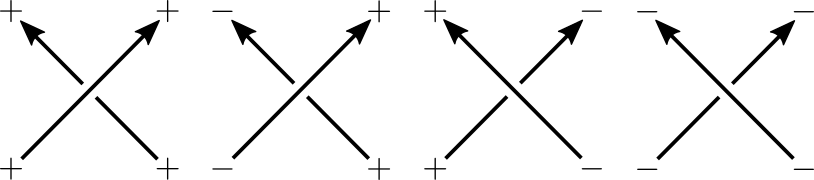}
    \caption{4 basic blocks}
    \label{fig:4basicblocks}
\end{figure}
In the figure, the signs denote whether we sum over non-negative integer weights ($+$ sign) or negative weights ($-$ sign). In other words, we are using the highest weight Verma module $V^h_{\infty,\log_q x - 1}$ for the strands labelled with $+$ and the lowest weight Verma module $V^l_{\infty, \log_q x +1}$ for the strands labelled with $-$. The extension of the $R$-matrix we described above gives the corresponding $R$-matrices. Thanks to the bound \eqref{eq:bigO} on the degree, any state sum built out of these 4 basic blocks always converges to a power series in $x^{-1}$ whose coefficients are Laurent polynomials in $q$. 
The class of links that can be made out of these basic blocks are known as the \emph{homogeneous braid links}. As a reminder, a braid is called homogeneous if, for each fixed position $k$, all the elementary braid $\sigma_k$ appears with either positive or negative powers in the braid word. A link is called a homogeneous braid link if it has a homogeneous braid presentation. 

Let us state in more precise terms what this new state sum model is. Given a homogeneous braid link presented as a closure of a homogeneous braid, we first open up the left-most strand, just as we did in \cite{Park}. Next, for each positive crossing we label the $j$ and $j'$ segments with $+$, and for each negative crossing we label the $j$ and $j'$ segments with $-$. (By a segment, we mean an edge of a braid or a knot diagram when we delete a small neighborhood of the set of all crossings.) In this way, all the columns except for the first column (the left-most one) are labeled with either $+$ or $-$. The first column can be labelled with either all $+$ or all $-$, and it doesn't matter. See Fig. \ref{fig:figure8braid} for an example of the figure-eight knot. In the example, we chose to label the first column with $+$ signs. 
\begin{figure}[ht]
    \centering
    \includegraphics[scale=0.6]{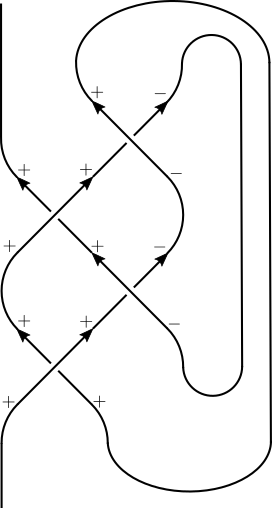}
    \caption{The figure-eight knot made out of the 4 basic blocks}
    \label{fig:figure8braid}
\end{figure}
Once we have labelled all the segments with either $+$ or $-$ sign so that all the crossings are one of the 4 basic blocks as in Fig. \ref{fig:4basicblocks}, we can take the partial quantum trace of the endomorphism on the tensor product of the highest and lowest weight Verma modules induced by the product of $R$-matrices. That is, we consider the state sum obtained by replacing all the crossings by the corresponding $R$-matrices and by taking the quantum trace. As a reminder, $v^n\in V_\infty(\log_q x-1-2n)$, so when a strand with weight $n$ gets closed up, we need to put an additional factor of $x^\frac{1}{2}q^{-\frac{1}{2}-n}$ when we take the quantum trace. As usual, we only sum over the indices of the internal segments and don't sum over the index of the open segment. Instead, we fix a vector in the highest weight Verma module if the open segment is labelled with $+$ and a vector in the lowest weight Verma module if it is labelled with $-$. It doesn't matter which vector we choose, but a typical choice that often simplifies the calculation is $v^0$ for the highest weight Verma module and $v^{-1}$ for the lowest weight Verma module. 
Let's denote the result of this partial quantum trace by $\sum(R\cdots R)$. 
\begin{defn}\label{def:invertedStateSum}
    Given a homogeneous braid diagram $\beta$, the \emph{inverted state sum} is
    \begin{equation}
        Z^{\textrm{inv}}(\beta) := (-1)^{s}\sum(R\cdots R),
    \end{equation}
    where 
    \[
    s = (\#\textrm{ of columns with negative crossings}) + (\#\textrm{ of }(\check{R}^{-1})_{-,-}^{-,-}\text{ blocks}).
    \]
\end{defn}
The role of the sign $(-1)^s$ will become clear below. Our main result of this section is that the inverted state sum gives an invariant of homogeneous braid links. 
\begin{thm}\label{thm:mainTheorem}
For any homogeneous braid link $L$ with a homogeneous braid diagram $\beta_L$, let
\[
F_L := (x^{\frac{1}{2}}-x^{-\frac{1}{2}})Z^{\textrm{inv}}(\beta_L),
\]
where $x$ is the parameter associated to the open strand. Then the followings are true. 
\begin{enumerate}
    \item $F_L$ is an invariant of $L$. That is, it is independent of the choice of the homogeneous braid representative. 
    \item Setting $q=e^{\hbar}$, its $\hbar$-expansion agrees with the Melin-Morton-Rozansky expansion of the colored Jones polynomials. 
    \item $F_L$ is annihilated  by the quantum $A$-polynomial (or quantum $A$-ideal in case it has multiple components). 
\end{enumerate}
In other words, $F_L$ is the invariant whose existence was conjectured in \cite{GM}. 
\end{thm}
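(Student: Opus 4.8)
The plan is to turn the heuristic inversion identity $\sum_{n\geq 0} z^n = -\sum_{n<0} z^n$ into a precise statement at the level of each basic block, and to bootstrap the three assertions from there. The central step is an \emph{inversion lemma}: for a strand relabelled from $+$ to $-$, I would show that summing the extended $R$-matrix entries over the negative indices $n<0$ (i.e.\ over the lowest weight Verma module $V^l_{\infty,\log_q x+1}$), after multiplication by the sign recorded in $(-1)^s$, produces exactly the same rational function of $q$ and $x$ as the formal sum over $n\geq 0$ appearing in the ordinary large-color state sum. The explicit case split in the extended formulas for $\check{R}$ and $\check{R}^{-1}$, together with the rescaling $v_n = \tfrac{[n]!}{\prod_{k=0}^{n-1}[\lambda+2+k]}v^{-n-1}$ already identified in the discussion of the meaning of the extension, is precisely what is needed to carry out this comparison term by term, sending each geometric-type product $\prod(1-q^{j+l}x^{-1})$ to its reciprocal on the other side of the unit circle.

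Granting the inversion lemma, part (2) follows quickly. By \cite{Park} the ordinary large-color state sum resums the Melvin--Morton--Rozansky expansion of the colored Jones polynomials; since a rational function and its inversion share the same $\hbar$-expansion once we substitute $q=e^\hbar$ and $x=q^n$, the $\hbar$-expansion of $Z^{\mathrm{inv}}(\beta_L)$ coincides block by block with that of the ordinary state sum, and the prefactor $(x^{1/2}-x^{-1/2})$ supplies the unknot normalization that matches $F_K$. For part (3) I would argue that the quantum $A$-polynomial, equivalently the $\hat A$-operator of the AJ recursion, annihilates $F_L$ because it is the very recursion satisfied by the colored Jones polynomials, and the recursion operator $\hat A(\hat x,\hat y,q)$ is blind to the inversion, in the same way that $\frac{1}{1-z}$ is a single rational function regardless of which side of the unit circle it is expanded on. Concretely I would either transport the annihilation through the inversion lemma or verify $q$-holonomicity of the state sum directly and identify the resulting recursion with the known one.

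Part (1) is the step I expect to be the main obstacle. The cleanest route is a rigidity argument: the difference equation $\hat A F_L=0$ from part (3) has a finite-dimensional solution space over the field of $q$-constants, and the leading terms pinned down by the MMR expansion of part (2) single out $F_L$ uniquely; since both $\hat A$ and the MMR expansion are link invariants, so is $F_L$. The delicate points are that the two labelings of the first column must be shown to give the same series, so that the normalization is genuinely intrinsic, and that one must control solutions inside the ring of power series in $x^{-1}$ with Laurent-polynomial coefficients in $q$ rather than over an abstract field.

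A more hands-on alternative to part (1) is to verify invariance directly under the moves relating homogeneous braid presentations: the braid relation from the Yang--Baxter equation for the extended (principal series) $R$-matrix, conjugation invariance from the cyclicity of the quantum trace, and stabilization from a single one-crossing partial-trace computation. The genuine difficulty in this route is that one must remain within the homogeneous class throughout, and must check that the sign $s$ and the factor $(x^{1/2}-x^{-1/2})$ compensate exactly for each change of $\pm$ labels and for Markov stabilization; establishing that all homogeneous representatives of a given link are connected by such homogeneity-preserving moves is where I expect the real work to lie.
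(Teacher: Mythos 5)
Your central ``inversion lemma'' is where the proposal breaks down. For a single crossing with all external indices fixed, the extended $R$-matrix entry is indeed rational, but that is not the statement you need: what must be compared are the full state sums, i.e.\ infinite sums over all internal indices, and these are \emph{not} rational functions of $x$ and $q$ --- the summands involve $q$-binomial coefficients in the summation indices, so summing over $n\geq 0$ versus $n<0$ does not produce a common rational function the way the geometric series does. Worse, the two sums do not even live in a common ring where a term-by-term comparison could be made: the inverted sum converges as a power series in $x^{-1}$ with coefficients in $\mathbb{Z}[q,q^{-1}]$, while the ordinary sum over non-negative indices diverges in that topology as soon as the braid has a negative crossing (each negative crossing contributes $O(x^{\frac{j+j'+1}{2}})$ by \eqref{eq:bigO}, so arbitrarily large positive powers of $x$ appear), and it only makes sense near $x=1$, e.g.\ as an $\hbar$-expansion. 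The paper flags exactly this as the key obstruction in the Remark following the proof of Theorem \ref{thm:mainTheorem}: a state sum and its inversion are defined in different domains and ``can't be compared directly.''

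What makes the comparison possible in the paper is a descent to Rozansky's \emph{parametrized} $R$-matrices \eqref{eq:parametrizedRmatrices}, whose entries are monomials in free parameters $\alpha,\beta,\gamma$. At that level the Foata--Zeilberger theorem supplies genuine rationality: the parametrized state sum equals $1/\det(I-\mathcal{B})$, a rational function of the parameters, and the inversion identity $\mathsf{Z}=(-1)^s\mathsf{Z}^{\mathrm{inv}}$ (Lemma \ref{lem:parametrizedInvertedStateSumFormula}) is proved combinatorially by a sign-weighted bijection between simple multi-cycles of the two highway diagrams --- this is also where the sign $(-1)^s$ is actually justified, which your outline never verifies. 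Rozansky's differential operators $D_n$ then transport this one rational-function identity to every order in $\hbar$, giving part (2); parts (1) and (3) are then immediate, since an $\hbar$-series uniquely determines Laurent-polynomial coefficients in $q=e^{\hbar}$. In particular your heavy machinery for part (1) is unnecessary, and your Markov-move alternative is genuinely dangerous: homogeneous representatives of a link need not be connected through homogeneity-preserving moves, as you yourself worry. Without the parametrized/fermionic layer, or some substitute source of rationality, the proposed argument cannot get started.
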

\begin{proof}
For simplicity, let's focus on the case $L$ is a knot. The argument we are about to present can be easily generalized to the case of links. In this case, $Z^{\textrm{inv}}(\beta_L)$ is an element of $\mathbb{Z}[q,q^{-1}][[x^{-1}]]$. Our goal is to show that the $\hbar$-expansion of this series agrees with the Melvin-Morton-Rozansky expansion of the colored Jones polynomials of $L$ expanded near $x=\infty$, which is part (2) of the Theorem. Before doing that, let's first see how part (1) and (3) immediately follow from part (2). Part (1) follows from part (2) because the Melvin-Morton-Rozansky expansion is an invariant of a knot, and part (2) shows that it can be re-summed into a series in $x^{-1}$ with coefficients in $\mathbb{Z}[q,q^{-1}]$. Since the $\hbar$-series uniquely determines the Laurent polynomial in $q=e^{\hbar}$, $F_L$ itself is an invariant of $L$, and that proves part (1). Proof of part (3) from (2) is similar. Since the Melvin-Morton-Rozansky expansion is annihilated by the quantum $A$-polynomial, $\hat{A}_L Z^{\textrm{inv}}(\beta_L)$ should vanish when expanded into a series in $\hbar$. Since $\hat{A}_L Z^{\textrm{inv}}(\beta_L) \in \mathbb{Z}[q,q^{-1}][[x^{-1}]]$, it follows that each coefficient should vanish, meaning that $\hat{A}_L Z^{\textrm{inv}}(\beta_L) = 0$. 

Now let's prove part (2). For this, we combine ideas from \cite{LW} and \cite{R}. Following \cite{R}, we define the \emph{parametrized $R$-matrices} to be
\begin{align}\label{eq:parametrizedRmatrices}
    \mathsf{R}(\alpha,\beta,\gamma)_{i,j}^{i',j'} &= \delta_{i+j,i'+j'}\binom{i}{j'}\alpha^j \beta^{j'} \gamma^{i-j'}, \\
    \mathsf{R}^{-1}(\alpha,\beta,\gamma)_{i,j}^{i',j'} &= \delta_{i+j,i'+j'}\binom{j}{i'}\alpha^i \beta^{i'} \gamma^{j-i'},\nonumber
\end{align}
where $i,j,i',j' \geq 0$. 
Given an oriented knot diagram with one strand open, we can consider the state sum by replacing each positive crossing with $\mathsf{R}$ and negative crossing with $\mathsf{R}^{-1}$. We will use different parameters for each crossing, so there will be $3c$ independent parameters in total, where $c$ is the number of crossings of the knot diagram. It should be noted that the parametrized $R$-matrices do \emph{not} satisfy either Yang-Baxter relation or unitarity. In particular, $\mathsf{R}$ is \emph{not} the inverse matrix of $\mathsf{R}^{-1}$; we are abusing the notation for convenience. The significance of the parametrized $R$-matrix comes from the fact that they induce an algebra morphism in the following way. Denoting the vector $v^i\otimes v^j$ by a monomial $z_1^i z_2^j$, we see that
\begin{align*}
    \mathsf{R}(z_1^iz_2^j) &= (\gamma z_1 + \beta z_2)^i(\alpha z_1)^j = \mathsf{R}(z_1)^i\mathsf{R}(z_2)^j,\\
    \mathsf{R}^{-1}(z_1^iz_2^j) &= (\alpha z_2)^i(\beta z_1 + \gamma z_2)^j = \mathsf{R}^{-1}(z_1)^i\mathsf{R}^{-1}(z_2)^j.
\end{align*}
Put in a slightly different language, this means that the parametrized $R$-matrices induce a model of random walk of free bosons on the knot diagram. It follows from the theorem of Foata and Zeilberger \cite{FZ} (see also \cite{LW} for an exposition of Foata-Zeilberger formula) that the result of this state sum is
\begin{equation}
    \mathsf{Z}(\{\alpha\},\{\beta\},\{\gamma\}) = \frac{1}{\det(I - \mathcal{B})},
\end{equation}
where $\mathcal{B}$ is the transition matrix of this model of random walk. That is, $\mathcal{B}$ is the $n\times n$ matrix where $n$ is the number of internal segments of the knot diagram, and it records the probability (or the weight) of a boson to jump from one segment to another. The weight is determined by the corresponding entry of the matrix $\mathsf{R}$ or $\mathsf{R}^{-1}$, with $(i,j) = (0,1)$ or $(1,0)$ and $(i',j') = (0,1)$ or $(1,0)$. From the definition of the transition matrix, it is easy to see that the denominator, $\det(I - \mathcal{B})$, can be expressed in the following way. 
\begin{equation}\label{eq:detAsSumOfCycles}
        \det(I - \mathcal{B}) = \sum_{c}(-1)^{|c|}W(c),
\end{equation}
where the sum ranges over all simple multi-cycles $c$, $|c|$ is the number of components of the multi-cycle $c$, and $W(c)$ denotes the weight of $c$. Let us clarify some of the terminologies. A \emph{cycle} is a cyclic path (without a starting point or an end point) on the knot diagram as an oriented graph. A \emph{multi-cycle} is an unordered tuple of cycles. We call a multi-cycle $\emph{simple}$ if it uses each segment at most once. Since it counts simple multi-cycles (with weights), $\det(I-\mathcal{B})$ can be obtained as a result of a state sum where the space of spin states is spanned by $0$ and $1$, instead of all non-negative integers, as long as we keep track of the sign $(-1)^{|c|}$. In this sense, $\det(I - \mathcal{B})$ can be obtained as a state sum in a system of random walk of free fermions. 

Now suppose that the oriented knot diagram is a homogeneous braid diagram, and let's `invert' this model of random walk. That is, we consider the inverted state sum, but using $\mathsf{R}$ and $\mathsf{R}^{-1}$ instead $\check{R}$ and $\check{R}^{-1}$. Explicitly, the 4 basic blocks are given by
\begin{align}\label{eq:invertedParametrizedRmatrices}
    \mathsf{R}_{+,+}^{+,+} &: \delta_{i+j,i'+j'}\binom{i}{j'}\alpha^j \beta^{j'} \gamma^{i-j'},\\
    \mathsf{R}_{-,+}^{-,+} &: \delta_{i+j,i'+j'}\gamma^{-1}\binom{j'+(-i-1)}{j'}\alpha^j (-\beta)^{j'} \gamma^{-(-i-1)-j'},\nonumber\\
    (\mathsf{R}^{-1})_{+,-}^{+,-} &: \delta_{i+j,i'+j'}\gamma^{-1}\binom{i'+(-j-1)}{i'}\alpha^i(-\beta)^{i'}\gamma^{-i'-(-j-1)},\nonumber\\
    (\mathsf{R}^{-1})_{-,-}^{-,-} &: \delta_{i+j,i'+j'} \alpha^{-1}\beta^{-1}\binom{-i'-1}{-j-1}\alpha^{-(-i-1)}\beta^{-(-i'-1)}(-\gamma)^{(-i'-1)-(-j-1)}.\nonumber
\end{align}
Extracting out the factors $\gamma^{-1}$ and $\alpha^{-1}\beta^{-1}$ to normalize these `inverted' parametrized $R$-matrices, we see that this again gives a model of random walk of free bosons. This can be best described with what we call ``highway diagrams". Before inversion, the highway diagrams for the positive and the negative crossing look like Fig. \ref{fig:highwayDiagrams}. 
\begin{figure}[ht]
    \centering
    \includegraphics[scale=0.6]{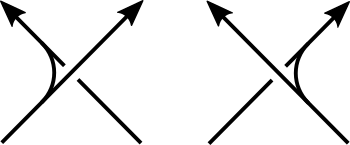}
    \caption{Highway diagrams for the positive and the negative crossing}
    \label{fig:highwayDiagrams}
\end{figure}
Note the arcs connecting the over-strand with the under-strand. They denote the way the bosons can move; they can move from the over-strand to the under-strand, but not the other way around. 
After inverting some of the indices according to the 4 basic blocks as in Fig. \ref{fig:4basicblocks}, we can make change of variables to the inverted indices and use $k_{\mathrm{inv}} := -k-1 \in \mathbb{Z}_{\geq 0}$ instead of $k\in \mathbb{Z}_{<0}$ for each inverted index $k$ ($i$, $j$, $i'$, or $j'$). The effect of this change of variables to the diagram is to invert the orientation of the inverted segments. As a result, the inverted highway diagrams for the 4 basic blocks look like Fig. \ref{fig:invertedHighwayDiagrams}. 
\begin{figure}[ht]
    \centering
    \includegraphics[scale=0.6]{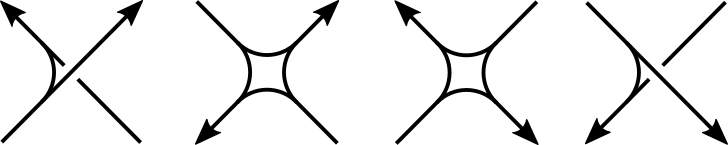}
    \caption{Inverted highway diagrams for the 4 basic blocks}
    \label{fig:invertedHighwayDiagrams}
\end{figure}
The inverted parametrized $R$-matrices can now be thought of as the maps from the incoming strands to the outgoing strands. After extracting out the pre-factor ($\gamma^{-1}$ for $\mathsf{R}_{-,+}^{-,+}$ and $(\mathsf{R}^{-1})_{+,-}^{+,-}$, and $\alpha^{-1}\beta^{-1}$ for $(\mathsf{R}^{-1})_{-,-}^{-,-}$), each of these maps induce an algebra morphism in the sense we discussed above. For instance, in case of $\gamma \mathsf{R}_{-.+}^{-,+}$, denoting the vector $v_{i'_{\mathrm{inv}}} \otimes v^j$ by a monomial $z_1^{i'_{\mathrm{inv}}}z_2^{j}$, we see that
\[
\gamma \mathsf{R}(z_1^{i'_{\mathrm{inv}}}z_2^{j}) = (\gamma^{-1}z_1 - \beta\gamma^{-1}z_2)^{i'_{\mathrm{inv}}}(\alpha\gamma^{-1}z_1 - \alpha\beta\gamma^{-1}z_2)^j = \qty(\gamma \mathsf{R}(z_1))^{i'_{\mathrm{inv}}}\qty(\gamma \mathsf{R}(z_2))^j.
\]
It follows that the result of the state sum of this new model of random walk of free bosons is
\begin{equation}\label{eq:parametrizedInvertedStateSum}
    \mathsf{Z}^{\mathrm{inv}}(\{\alpha\},\{\beta\},\{\gamma\}) = \frac{\prod_{c_2}\gamma^{-1}\prod_{c_3}\gamma^{-1}\prod_{c_4}\alpha^{-1}\beta^{-1}}{\det(I - \mathcal{B}_{\mathrm{inv}})},
\end{equation}
where $\mathcal{B}_{\mathrm{inv}}$ is the transition matrix of this new model of random walk (determined by $\mathsf{R}_{+,+}^{+,+}, \gamma\mathsf{R}_{-,+}^{-,+}, \gamma\mathsf{R}_{+,-}^{+,-}, \alpha\beta\mathsf{R}_{-,-}^{-,-}$), and the term in the numerator denotes the product of $\gamma^{-1}$ for each crossing of the second and the third type multiplied by the product of $\alpha^{-1}\beta^{-1}$ for each crossing of the fourth type, basically pulling out the pre-factor. 

We will prove the following lemma. 
\begin{lem}\label{lem:parametrizedInvertedStateSumFormula}
\begin{equation}
    \mathsf{Z}(\{\alpha\},\{\beta\},\{\gamma\}) = (-1)^s \mathsf{Z}^{\mathrm{inv}}(\{\alpha\},\{\beta\},\{\gamma\}),
\end{equation}
where $s$ is as in Definition \ref{def:invertedStateSum}. 
\end{lem}
To prove the lemma, let's compare the denominators, $\det(I-\mathcal{B})$ and $\det(I-\mathcal{B}_{\mathrm{inv}})$. Recall from \eqref{eq:detAsSumOfCycles} that each of these determinants can be understood as the weighted sum over all simple multi-cycles. So our strategy is to find a one-to-one correspondence between the set of all simple multi-cycles in the first model of random walk of free fermions and that of the second model. 
A crucial observation is the following correspondence. 
\begin{prop}\label{prop:inversionCorrespondence}
For $i,i',j,j'\in \{0,1\}$, there is a correspondence between the parametrized $R$-matrix and its inverted version , as in Fig. \ref{fig:inversionCorrespondence}. 
\begin{figure}[ht]
    \centering
    \includegraphics[scale=0.4]{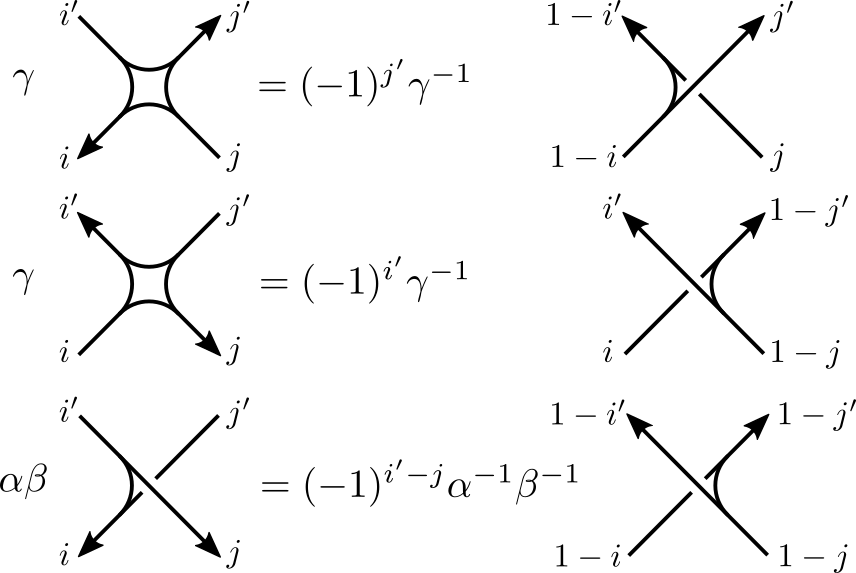}
    \caption{The correspondence between the parametrized $R$-matrix and its inversion}
    \label{fig:inversionCorrespondence}
\end{figure}
\end{prop}
This can be proved by checking all the cases. There are two cases we need to be a little careful of. Those are when $i,j,i',j'$ are all $1$ in the crossing of the type either $\mathsf{R}_{-,+}^{-,+}$ or $\mathsf{R}_{+,-}^{+,-}$. In those cases, the right-hand side of the equations in Fig. \ref{fig:inversionCorrespondence} are zero, but naively the inverted parametrized $R$-matrices \eqref{eq:invertedParametrizedRmatrices} themselves are non-zero. This is okay, and in fact when we consider the fermionic model that computes $\det(I-\mathcal{B}_{\mathrm{inv}})$ (instead of the bosonic model that computes its inverse), it is more natural to set the value of the matrices $\mathsf{R}_{-,+}^{-,+}$ and $\mathsf{R}_{+,-}^{+,-}$ to be 0 when $i,j,i',j'$ are all $1$. This is because in those types of crossing, when $i,j,i',j'$ are all $1$, then there are two different ways to make it into a simple multi-cycle. That is, there are two different simple multi-cycles realizing that configuration. For instance, in case of $\mathsf{R}_{-,+}^{-,+}$, we can either connect $i'$ to $j'$ and $j$ to $i$, or we can connect $i'$ to $i$ and $j$ to $j'$. Moreover, one of the two simple multi-cycles have one more component than the other. Since we are counting simple multi-cycles with weight and sign as in \ref{eq:detAsSumOfCycles}, the contribution of that configuration is 0. Therefore in this fermionic model $\mathsf{R}_{-,+}^{-,+}$ and $\mathsf{R}_{+,-}^{+,-}$ are zero when $i,j,i',j'$ are all $1$, and this proves the proposition. 

We are now ready to compare $\det(I-\mathcal{B})$ with $\det(I-\mathcal{B}_{\mathrm{inv}})$. Let $D$ and $D_{\mathrm{inv}}$ be the highway diagrams for the original knot diagram and its inversion. Any multi-cycles in either $D$ or $D_{\mathrm{inv}}$ can be thought of as a configuration of $0$'s and $1$'s on the diagram. Moreover, any such configuration that has a non-zero weight uniquely determines the simple multi-cycle. 
Following the rule as in Fig. \ref{fig:inversionCorrespondence}, for each simple multi-cycle $c$ in $D$, there is a corresponding multi-cycle $c_{\mathrm{inv}}$ in $D_{\mathrm{inv}}$ and vice versa. Even their weights agree, up to a simple factor as given in Fig. \ref{fig:inversionCorrespondence}. We need to carefully study these extra factors. 
The factors $\gamma^{-1}$ and $\alpha^{-1}\beta^{-1}$ give an overall factor of $\prod_{c_2}\gamma^{-1}\prod_{c_3}\gamma^{-1}\prod_{c_4}\alpha^{-1}\beta^{-1}$, which gets cancelled out with the numerator of \eqref{eq:parametrizedInvertedStateSum}. 

Let's take a careful look at the sign factors. We claim that the overall sign factor agrees with $(-1)^{|c| - |c_{\mathrm{inv}}| + s}$, where $s$ is as in Definition \ref{def:invertedStateSum}. The basic idea is the following. Suppose we have $n$ number of disjoint intervals on a circle, and suppose that the ends of the intervals are connected in a certain way so that they form a collection of cycles. If we change the set of intervals on the circle to their complement, then the resulting number of cycles and the original number of cycles have the same parity if $n$ is odd, and they have the opposite parity if $n$ is even. See Fig. \ref{fig:numberOfComponents} as an illustration. 
\begin{figure}[ht]
    \centering
    \includegraphics[scale=0.4]{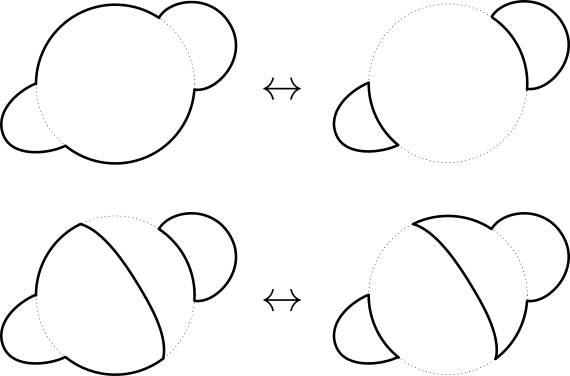}
    \caption{The change of parity of the number of components as we invert the circle}
    \label{fig:numberOfComponents}
\end{figure}
This is exactly what happens when we invert a column of a knot diagram. Under the correspondence in Fig. \ref{fig:inversionCorrespondence}, $0$'s and $1$'s in a column labelled with $-$ signs change place. So, for instance, if we have a column labelled with $-$ signs and if the adjacent columns are labelled with $+$ signs, then a typical picture would be something like Fig. \ref{fig:inversionNnumberOfComponents}. 
\begin{figure}[ht]
    \centering
    \includegraphics[scale=0.6]{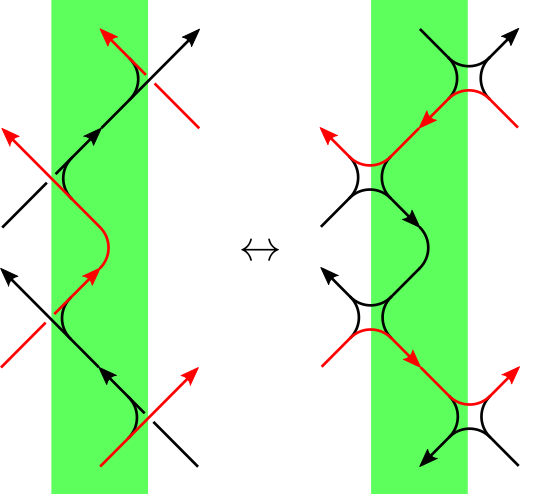}
    \caption{Inverting a column labelled with $-$ sign}
    \label{fig:inversionNnumberOfComponents}
\end{figure}
In the figure, the column we are inverting is highlighted with a green stripe, and this is the one that plays the role of the circle in Fig. \ref{fig:numberOfComponents}. A part of a typical multi-cycle is drawn with red lines. As pointed out above, the change of parity of the number of components of the multi-cycle is determined by the parity of the red intervals in the green stripe. The number of such intervals equals the number of red outgoing strands from the green stripe, which is the sum of $i'$'s for the crossings on the left edge of the stripe plus the sum of $j'$'s for the crossings on the right edge of the stripe. 

More generally, there can be some number of columns labelled with $-$ signs that are adjacent to each other. In that case, the number of circles is not just the number of columns that we invert, because some of the circles are connected through crossings of type $(\mathsf{R}^{-1})^{-,-}_{-,-}$ with either $(i,j,i',j') = (1,0,0,1),\, (0,1,1,0)$ or $(1,1,1,1)$. As a result, the parity of the number of circles that we invert has the same parity with the number of columns with $-$ sign plus the number of such crossings. Note that the number of crossings of type $(\mathsf{R}^{-1})^{-,-}_{-,-}$ with either $(i,j,i',j') = (1,0,0,1),\, (0,1,1,0)$ or $(1,1,1,1)$ equals the number of all crossings of type $(\mathsf{R}^{-1})^{-,-}_{-,-}$ minus the number of such crossings with $(i,j,i',j') = (0,1,0,1)$ (or $(1,0,1,0)$ after inversion). 
All in all, we have
\[
(-1)^{\textrm{change of parity in the }\#\textrm{ of components}} = (-1)^{\sum_{c_2}{j'}+\sum_{c_3}{i'}+\sum_{c_4}{(i'-j)}+s},
\]
where $\sum_{c_2}{j'}+\sum_{c_3}{i'}+\sum_{c_4}{(i'-j)}$ denotes the sum of $j'$'s for all the crossings of the second type plus the sum of $i'$'s for all the crossings of the third type plus the sum of $(i'-j)$'s for all the crossings of the fourth type. Comparing with Fig. \ref{fig:inversionCorrespondence}, we see that the overall change of sign in the weight of any multi-cycle is just $(-1)^s$. This concludes the proof of Lemma \ref{lem:parametrizedInvertedStateSumFormula}. 

In the classical limit, the $R$-matrices $\check{R}(x)$ and $\check{R}^{-1}(x)$ can be obtained by specializing the parameters of the parametrized $R$-matrix. More precisely, 
\begin{align}\label{eq:specializationOfParametrizedRmatrices}
    \check{R}(x)_{i,j}^{i',j'} \bigg\vert_{q=1} &= \mathsf{R}(\alpha,\beta,\gamma)_{i,j}^{i',j'} \bigg\vert_{\alpha = x^{-\frac{1}{2}}, \beta = x^{-\frac{1}{2}}, \gamma = 1-x^{-1}},\\
    \check{R}^{-1}(x)_{i,j}^{i',j'} \bigg\vert_{q=1} &= \mathsf{R}^{-1}(\alpha,\beta,\gamma)_{i,j}^{i',j'} \bigg\vert_{\alpha = x^{\frac{1}{2}}, \beta = x^{\frac{1}{2}}, \gamma = 1-x}.\nonumber
\end{align}
This fact, combined with Lemma \ref{lem:parametrizedInvertedStateSumFormula}, immediately proves that the classical limit of $Z^{\mathrm{inv}}(\beta_L)$ agrees with $\frac{1}{\Delta_L(x)}$, where $\Delta_L(x)$ denotes the Alexander polynomial of $L$. The rest of the proof follows easily from what Rozansky proved in \cite{R}; there exists a differential operator $D_n$ in the parameters $\{\alpha\}, \{\beta\}, \{\gamma\}$ with polynomial coefficients such that the coefficient of $\hbar^n$ in the Melvin-Morton-Rozansky expansion is
\begin{equation}\label{eq:higherhbarTermBeforeInversion}
D_n\,\mathsf{Z}(\{\alpha\},\{\beta\},\{\gamma\}) \bigg\vert_{\textrm{specialize }\{\alpha\}, \{\beta\}, \{\gamma\}\textrm{ according to \eqref{eq:specializationOfParametrizedRmatrices}}}.
\end{equation}
The way it is proved is by observing that the $\hbar$-expansion of $\check{R}(x)$ can be obtained by acting a certain differential operator in $\alpha, \beta, \gamma$ to $\mathsf{R}(\alpha, \beta, \gamma)$ and then specializing these parameters. 
It is easy to see that the differential operator for the $R$-matrix remains the same even when we invert some of the indices, and it follows that the operator $D_n$ itself remains the same under inversion. So the corresponding $\hbar^n$-coefficient for the inverted state sum is given by
\begin{equation}\label{eq:higherhbarTermAfterInversion}
(-1)^s D_n\,\mathsf{Z}^{\mathrm{inv}}(\{\alpha\},\{\beta\},\{\gamma\})\bigg\vert_{\textrm{specialize }\{\alpha\}, \{\beta\}, \{\gamma\}\textrm{ according to \eqref{eq:specializationOfParametrizedRmatrices}}}.
\end{equation}
By Lemma \ref{lem:parametrizedInvertedStateSumFormula}, \eqref{eq:higherhbarTermBeforeInversion} and \eqref{eq:higherhbarTermAfterInversion} are the same as rational functions. This finishes the proof of Theorem \ref{thm:mainTheorem}. 
\end{proof}

\begin{rmk}
It is important to note that \eqref{eq:higherhbarTermBeforeInversion} is a power series in $(1-x)$ while \eqref{eq:higherhbarTermAfterInversion} is a power series in $x^{-1}$. In other words, a state sum and its inversion are typically defined in a different domain; one near $x=1$ and the other near $x=\infty$ (or $x=0$ if we had started with the lowest weight Verma module). For this reason, they can't be compared directly. This is why even though a lot is known about the integral form of the Melvin-Morton-Rozansky expansion cyclotomically (see e.g. \cite{Hab, Hab2, Willetts}), it has been a challenge to find its integral form near $x=0$ or $x=\infty$ as \cite{GM} conjectured. 

In the above proof, we circumvented the issue of having different domains of convergence by making use of the fact that they can both be expressed as rational functions and showed that the two rational functions are the same. 
\end{rmk}

\begin{rmk}
Let $L$ be a homogeneous braid link. 
In case the link has $l$ number of components, let $x_1, \cdots, x_l$ be the corresponding $x$-variables. Define $\deg_x x_n = 1$ for each $1\leq n\leq l$. Then analogously to \eqref{eq:bigO}, 
\begin{align}
    \check{R}(x_n,x_m)_{i,j}^{i',j'} &= O(x^{-\frac{j+j'+1}{2}}),\\
    \check{R}^{-1}(x_n,x_m)_{i,j}^{i',j'} &= O(x^{\frac{j+j'+1}{2}}),\nonumber
\end{align}
as a power series in $x^{-1}$, for any $1\leq n,m\leq l$. From this bound on the degree, it is easy to see that the inverted state sum for $L$ is a well-defined power series whose coefficients are Laurent polynomials. 

By the theorem of Stallings \cite{Stallings}, $L$ is fibered. In particular, its Alexander-Conway polynomial is non-zero, and therefore the Melvin-Morton-Rozansky expansion is well-defined. Theorem \ref{thm:mainTheorem} says that, when written as a power series in $x^{-1}$, its coefficients can be re-summed into Laurent polynomials in $q$. 
\end{rmk}

\begin{rmk}
The effect of the sign factor on the positive and negative stabilization moves can be summarized as in Fig. \ref{fig:stabilizations}. 
\begin{figure}[ht]
    \centering
    \includegraphics[scale=0.7]{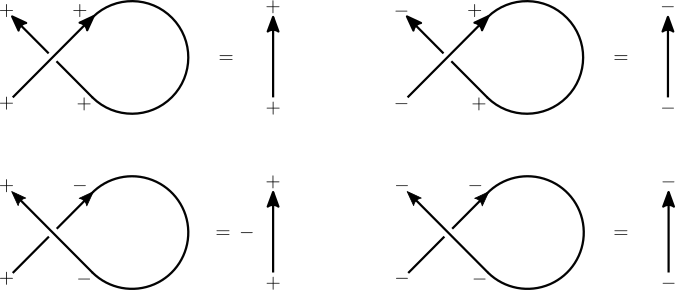}
    \caption{Sign factors under stabilizations}
    \label{fig:stabilizations}
\end{figure}
\end{rmk}

\begin{rmk}
When specialized according to \eqref{eq:specializationOfParametrizedRmatrices}, the equation \eqref{eq:detAsSumOfCycles} becomes Murakami's state sum expression for the (multi-variable) Alexander polynomial \cite{M}. In Murakami's state sum model, the space of spin states is a 2-dimensional super-vector space $V = \mathrm{span}\{v_0,v_1\}$ where $v_0$ has even degree and $v_1$ has odd degree. The trace is replaced by the super-trace, and one of the $R$-matrix element has an extra $-$ sign. The set of $v_1$-colored segments form a simple multi-cycle, and the number of components is counted by the signs coming from the super-trace and the $R$-matrix element. 
\end{rmk}

\begin{eg}[Figure-eight knot]
Take the braid $\sigma_1\sigma_2^{-1}\sigma_1\sigma_2^{-1}$ from bottom to top. We close up the second and the third strand while leaving the first strand open. 
Since this braid is homogeneous, it can be built out of the 4 basic blocks. See Fig. \ref{fig:figure8braid}. 
Before closing up the braid, let's say the indices of the three open strands are $0, m, k$ from left to right. In this case, these three indices uniquely determine all the internal indices. Summing over $m\geq 0$ and $k<0$, we get the following expression for $F_{\mathbf{4}_1}$:
\begin{align*}
F_{\mathbf{4}_1}(x,q) &= -(x^{\frac{1}{2}}-x^{-\frac{1}{2}})\sum_{\substack{m\geq 0\\ k< 0}}\check{R}(x)_{0,m}^{m,0}\,\check{R}^{-1}(x)_{0,k}^{0,k}\,\check{R}(x)_{m,0}^{0,m}\,\check{R}^{-1}(x)_{m,k}^{m,k}\cdot x^{\frac{1}{2}}q^{-\frac{1}{2}-m} \cdot x^{\frac{1}{2}}q^{-\frac{1}{2}-k}\\
&= -x^{-\frac{1}{2}} -2 x^{-\frac{3}{2}} -(\frac{1}{q}+3+q) x^{-\frac{5}{2}} - (\frac{2}{q^2}+\frac{2}{q}+5+2q+2q^2)x^{-\frac{7}{2}} + O(x^{-\frac{9}{2}}).
\end{align*}
In \cite{GM} this series was computed term by term using recursion, but we have found a closed formula! 
\end{eg}

\begin{eg}[$6_2$ knot]\label{eg:62knot}
Take the braid $\sigma_1^3\sigma_2^{-1}\sigma_1\sigma_2^{-1}$ from bottom to top. Computing the inverted state sum, we get
\[
F_{\mathbf{6}_2}(x,q) = -q x^{-\frac{3}{2}} -2q x^{-\frac{5}{2}} +(-1-3q+q^2) x^{-\frac{7}{2}} +(-\frac{2}{q}-2-4q+2q^2)x^{-\frac{9}{2}} +O(x^{-\frac{11}{2}}).
\]
\end{eg}

\begin{eg}[$6_3$ knot]\label{eg:63knot}
Take the braid $\sigma_1^2\sigma_2^{-1}\sigma_1\sigma_2^{-2}$ from bottom to top. Computing the inverted state sum, we get
\[
F_{\mathbf{6}_3}(x,q) = x^{-\frac{3}{2}} +2x^{-\frac{5}{2}} +(-\frac{1}{q}+3-q)x^{-\frac{7}{2}} +(-\frac{2}{q^2}-\frac{2}{q}+4-2q-2q^2)x^{-\frac{9}{2}} +O(x^{-\frac{11}{2}}).
\]
Note that each coefficient has $q\leftrightarrow q^{-1}$ symmetry, which is due to amphichirality of $\mathbf{6}_3$. 
\end{eg}

\begin{eg}[Whitehead link]\label{eg:Whitehead}
Take the braid $\sigma_1\sigma_2^{-1}\sigma_1\sigma_2^{-1}\sigma_1$ from bottom to top. Computing the inverted state sum, we get
\[
F_{\mathbf{Wh}}(x_1,x_2,q) = -q^{\frac{1}{2}}\sum_{n,m\geq 0}f_{n,m}^{\mathbf{Wh}}x_1^{-n-\frac{1}{2}}x_2^{-m-\frac{1}{2}},
\]
where
\[
f^{\mathbf{Wh}} = 
\begin{psmallmatrix}
1 & 1 & 1 & 1 &\cdots\\
1 & \frac{1}{q}+1-q & \frac{1}{q^2}+\frac{1}{q}+1-q-q^2 & \frac{1}{q^3}+\frac{1}{q^2}+\frac{1}{q}+1-q-q^2-q^3 & \cdots\\
1 & \frac{1}{q^2}+\frac{1}{q}+1-q-q^2 & \frac{1}{q^4}+\frac{1}{q^3}+\frac{2}{q^2}+\frac{1}{q} -2q -2q^2 & \frac{1}{q^6}+\frac{1}{q^5}+\frac{2}{q^4}+\frac{2}{q^3}+\frac{2}{q^2}-1-3q-3q^2-q^3+q^5 & \cdots\\
1 & \frac{1}{q^3}+\frac{1}{q^2}+\frac{1}{q}+1-q-q^2-q^3 & \frac{1}{q^6}+\frac{1}{q^5}+\frac{2}{q^4}+\frac{2}{q^3}+\frac{2}{q^2}-1-3q-3q^2-q^3+q^5 & \substack{\frac{1}{q^9}+\frac{1}{q^8}+\frac{2}{q^7}+\frac{3}{q^6}+\frac{3}{q^5}+\frac{3}{q^4}+\frac{2}{q^3}\\-\frac{3}{q}-4-6q-4q^2-2q^3+q^4+2q^5+q^6+q^7} & \cdots\\
\vdots & \vdots & \vdots & \vdots & \ddots
\end{psmallmatrix}.
\]
This matches perfectly with the computations done in \cite{Park} (one is mirror to another). 
\end{eg}

\begin{eg}[Borromean rings]\label{eg:Borromean}
Take the braid $\sigma_1\sigma_2^{-1}\sigma_1\sigma_2^{-1}\sigma_1\sigma_2^{-1}$ from bottom to top. Computing the inverted state sum, we get
\[
F_{\mathbf{Bor}}(x_1,x_2,x_3,q) = \sum_{n,m,l\geq 0}f_{n,m,l}^{\mathbf{Bor}}x_1^{-n-\frac{1}{2}}x_2^{-m-\frac{1}{2}}x_3^{-l-\frac{1}{2}},
\]
where
\[
f_0^{\mathbf{Bor}} =
\begin{psmallmatrix}
1 & 1 & 1 & \cdots\\
1 & 1 & 1 & \cdots\\
1 & 1 & 1 & \cdots\\
\vdots & \vdots & \vdots & \ddots
\end{psmallmatrix},
\]
\[
f_1^{\mathbf{Bor}} =
\begin{psmallmatrix}
1 & 1 & 1 & \cdots\\
1 & -\frac{1}{q^2}+3-q^2 & -\frac{1}{q^3}-\frac{1}{q^2}+\frac{1}{q}+3+q-q^2-q^3 & \cdots\\
1 & -\frac{1}{q^3}-\frac{1}{q^2}+\frac{1}{q}+3+q-q^2-q^3 & -\frac{1}{q^4}-\frac{2}{q^3}-\frac{1}{q^2}+\frac{2}{q}+5+2q-q^2-2q^3-q^4 & \cdots\\
\vdots & \vdots & \vdots & \ddots
\end{psmallmatrix},
\]
\[
f_2^{\mathbf{Bor}} =
\begin{psmallmatrix}
1 & 1 & 1 & \cdots\\
1 & -\frac{1}{q^3}-\frac{1}{q^2}+\frac{1}{q}+3+q-q^2-q^3 & -\frac{1}{q^4}-\frac{2}{q^3}-\frac{1}{q^2}+\frac{2}{q}+5+2q-q^2-2q^3-q^4 & \cdots\\
1 & -\frac{1}{q^4}-\frac{2}{q^3}-\frac{1}{q^2}+\frac{2}{q}+5+2q-q^2-2q^3-q^4 & \frac{1}{q^7}-\frac{1}{q^5}-\frac{5}{q^4}-\frac{6}{q^3}-\frac{1}{q^2}+\frac{6}{q}+13+6q-q^2-6q^3-5q^4-q^5+q^7 & \cdots\\
\vdots & \vdots & \vdots & \ddots
\end{psmallmatrix},
\]
and so on. Again, this is consistent with the computations done in \cite{Park}. Note that the coefficients have $q\leftrightarrow q^{-1}$ symmetry due to amphichirality of the Borromean rings. 
\end{eg}

\begin{eg}[$\mathbf{L7a1}$]\label{eg:L7a1}
Take the braid $\sigma_1^2\sigma_2^{-1}\sigma_1\sigma_2^{-1}\sigma_1\sigma_2^{-1}$ from bottom to top. Among the three strands that are open before closing up, let's say $x_1$ is the variable associated with the first two strands (they are connected once we close up the braid) and $x_2$ is the variable associated with the third strand. Computing the inverted state sum, we get
\[
F_{\mathbf{L7a1}}(x_1,x_2,q) = q^{\frac{1}{2}}\sum_{n,m\geq 0}f_{n,m}^{\mathbf{L7a1}}x_1^{-n-\frac{1}{2}}x_2^{-m-\frac{1}{2}},
\]
where
\[
f^{\mathbf{L7a1}} = 
\begin{psmallmatrix}
0 & 0 & 0 & 0 &\cdots\\
1 & 1 & 1 & 1 &\cdots\\
2 & 2 & 2 & 2 &\cdots\\
3-q & -\frac{1}{q^2}+4-q & -\frac{1}{q^3}-\frac{1}{q^2}+4 & -\frac{1}{q^4}-\frac{1}{q^3}-\frac{1}{q^2}+4+q^2 &\cdots\\
\vdots & \vdots & \vdots & \vdots & \ddots
\end{psmallmatrix}.
\]
The link $\mathbf{L7a1}$ consists of two unknots linked together with linking number $0$. See Fig. \ref{fig:L4a1}.
\begin{figure}[ht]
    \centering
    \includegraphics[scale=0.6]{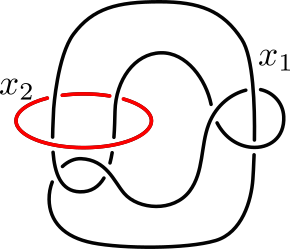}
    \caption{The link $\mathbf{L4a1}$}
    \label{fig:L4a1}
\end{figure}
So by doing $\frac{1}{r}$-surgery on one of the components, we can obtain various knots. For instance, $-1$, $+1$, $-\frac{1}{2}$. and $+\frac{1}{2}$-surgery on the $x_2$-component gives $\mathbf{6_3}$, $\mathbf{6_2}$, $\mathbf{8}_8$, and $\mathbf{8}_6$, respectively. These are good consistency checks, and indeed applying the partial $+1$ and $-1$ surgery formula (from \cite{Park} and also from Section \ref{sec:invertedHabiro} of this paper), we get the same result as in Examples \ref{eg:62knot} and \ref{eg:63knot}. 
\end{eg}

\subsection{Homogenization of braids}
As shown in \cite{Stallings}, for any link $L$ and an integer $n$, we can add an additional unknot component $O$, so that $L' = L \cup O$ is a homogeneous braid link, and $\mathrm{lk}(L,O) = n$. 
It follows that the problem of defining $F_L$ for all links is reduced to the following problem of finding an $\infty$-surgery formula. 
\begin{qn}\label{qn:infinitySurgery}
    Let $L'$ be a link obtained by adding a new component $K$ to a link $L$. Is there a formula relating $F_{L'}$ with $F_L$?
\end{qn}
\begin{rmk}
In \cite{Turaev}, there is a nice $\infty$-surgery formula for the torsion, or equivalently the Alexander-Conway polynomial in our case of link complements, when the $\mathrm{lk}(L,K)$ is non-zero. Written in terms of the inverse torsions, the $\infty$-surgery formula is given by
\begin{equation}\label{eq:torsionInftySurgery}
\frac{1}{\tau(S^3\setminus L)} = \frac{[K]-1}{\textrm{in}(\tau(S^3\setminus L'))},
\end{equation}
where $[K]$ is the homology class of $K$ in $S^3\setminus L$, and $\mathrm{in}:\mathbb{Z}[H_1(S^3\setminus L')] \rightarrow \mathbb{Z}[H_1(S^3\setminus L)]$ is the inclusion homomorphism. 
So, Question \ref{qn:infinitySurgery} is asking if there is a $q$-deformation of the equation \eqref{eq:torsionInftySurgery}. 
\end{rmk}

If a link $L$ can be obtained by performing $-\frac{1}{r}$-surgery on an unknot component of a link $L'$, and if $L'$ is a homogeneous braid link, then we can compute $F_L$ by using the partial surgery formula \cite{Park}. This has been a useful strategy in computing $F_K$ for a variety of knots; for instance, double twist knots can be obtained by $\frac{1}{m}$, $\frac{1}{n}$ surgery on the Borromean rings, which is a homogeneous braid link.

\subsection{Beyond homogeneous braids}\label{subsec:BeyondHomogeneousBraids}
It is clear from the proof of Theorem \ref{thm:mainTheorem} that we don't actually need to invert along the columns. The only place we used homogeneity of the braid was to ensure that the inverted state sum converges absolutely, in a sense that for any $M>0$ there are only finitely many configurations that contribute any terms with $x^{-1}$-degree $<M$. 
As long as the resulting state sum converges absolutely, we can invert along any simple multi-cycle that follows the direction of the braid. More precisely, we have the following theorem. 
\begin{thm}\label{thm:beyondHomogeneousBraids}
Let $\beta$ be any braid. Suppose there is an assignment of either $+$ sign or $-$ sign to each segment of the braid closure in such a way that it satisfies the following two conditions. 
\begin{enumerate}
    \item[(a)] For each crossing, the number of incoming $-$-signed segments equals the number of outgoing $-$-signed segments. 
    \item[(b)] The partial quantum trace computed after inverting the $-$-signed segments converges absolutely. 
\end{enumerate}
Then, denoting the quantum trace by $\sum(R\cdots R)$ as before, we can define the corresponding inverted state sum by
\begin{equation}
    Z^{\mathrm{inv}}(\beta) := (-1)^s\sum(R\cdots R),
\end{equation}
where $s$ is the number of closed components of the set of $-$-signed segments as a simple multi-cycle\footnote{When two $-$-signed strands cross each other at a crossing, assume that the multi-cycle follows along the same strand.}, and the corresponding analogue of Theorem \ref{thm:mainTheorem} is true. 
\end{thm}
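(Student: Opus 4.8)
The plan is to follow the proof of Theorem \ref{thm:mainTheorem} essentially verbatim, isolating the two places where homogeneity entered and replacing them by hypotheses (a) and (b). As before, it suffices to prove the analogue of part (2) — that the $\hbar$-expansion of $Z^{\mathrm{inv}}(\beta)$ reproduces the Melvin–Morton–Rozansky expansion — since parts (1) and (3) then follow formally: the MMR expansion is already a link invariant annihilated by the quantum $A$-ideal, and a series in $x^{-1}$ with coefficients in $\mathbb{Z}[q,q^{-1}]$ is uniquely determined by its $\hbar$-expansion. The convergence needed to even speak of this series as a well-defined power series is precisely hypothesis (b), so nothing must be proved there; it is assumed outright, which is exactly the role homogeneity played in Theorem \ref{thm:mainTheorem}.

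First I would set up the parametrized $R$-matrices and the Foata–Zeilberger identity $\mathsf{Z}=1/\det(I-\mathcal{B})$ exactly as before. Neither this identity nor the expansion \eqref{eq:detAsSumOfCycles} of $\det(I-\mathcal{B})$ as a signed weighted sum over simple multi-cycles uses homogeneity; both hold for an arbitrary braid diagram. The inversion of a single index is a purely local operation: replacing $k$ by $k_{\mathrm{inv}}=-k-1$ reverses the orientation of that segment in the highway diagram and, after extracting a per-crossing prefactor, still induces an algebra morphism of free bosons (the computation displayed for $\gamma\mathsf{R}_{-,+}^{-,+}$ goes through for every index one chooses to invert). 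Hypothesis (a) is exactly what guarantees the inverted indices close up: at each crossing the numbers of incoming and outgoing $-$-segments agree, so the $-$-segments assemble into a well-defined simple multi-cycle whose closed components are the objects counted by $s$. This yields $\mathsf{Z}^{\mathrm{inv}}=(\text{prefactors})/\det(I-\mathcal{B}_{\mathrm{inv}})$ for any sign assignment satisfying (a).

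The crux is then the generalization of Lemma \ref{lem:parametrizedInvertedStateSumFormula}, i.e. the identity $\mathsf{Z}=(-1)^s\,\mathsf{Z}^{\mathrm{inv}}$. I would establish it by the same bijection between simple multi-cycles of $D$ and of $D_{\mathrm{inv}}$ induced by the local correspondence of Proposition \ref{prop:inversionCorrespondence}, now applied crossing-by-crossing to every configuration permitted by (a) — including those in which a $-$-segment follows a physical strand rather than running straight up a column, and the degenerate all-$1$ configurations where one must set the relevant matrix entry to zero to avoid counting a multi-cycle twice, exactly as in the homogeneous case. The prefactors again cancel the numerator of $\mathsf{Z}^{\mathrm{inv}}$, so everything reduces to the sign. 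Here each closed component of the $-$-multi-cycle plays the role of the ``circle'' in the intervals-on-a-circle lemma of Fig. \ref{fig:numberOfComponents}: inverting it changes the parity of the total number of components of the multi-cycle according to that rule, and running this over all $s$ closed components telescopes to the single global factor $(-1)^s$. With the lemma in hand the proof finishes as before, since Rozansky's differential operator $D_n$ is unaffected by inversion and the specialized rational functions therefore agree order by order in $\hbar$.

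The main obstacle I anticipate is precisely this last sign computation. In the homogeneous case the $-$-segments form vertical columns, so the interval count is transparent and the connections between adjacent $-$-columns occur only through the $(\check{R}^{-1})_{-,-}^{-,-}$ crossings; in the general case the $-$-multi-cycle can wind through the braid arbitrarily, its closed components need not be columns, and one must verify that summing the local parity contributions — over all the crossing types now permitted by (a) — still produces exactly $(-1)^s$ for the geometric $s$ of the statement, and that this $s$ specializes to the combinatorial count of Definition \ref{def:invertedStateSum} in the homogeneous case. Checking that no newly-allowed crossing type spoils either the local correspondence or the telescoping of signs is where the real care is needed.
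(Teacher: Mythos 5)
Your proposal is correct and follows essentially the same route as the paper: reduce everything to the generalization of Lemma \ref{lem:parametrizedInvertedStateSumFormula}, extend the local correspondence of Proposition \ref{prop:inversionCorrespondence} to the new crossing types allowed by condition (a), and obtain the sign $(-1)^s$ by applying the intervals-on-a-circle parity argument to each closed component of the $-$-signed multi-cycle. The ``obstacle'' you flag at the end is precisely the point the paper handles, and it does so no more thoroughly than you propose --- it verifies the sign-tracked correspondence explicitly for two representative new crossing types (noting the factor $(-1)^{j'-i}$ for one of them, interpreted as counting exits from $-$-signed segments) and declares the remaining cases straightforward to mimic.
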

\begin{proof}
Most part of the proof of Theorem \ref{thm:mainTheorem} carries over immediately. The only thing we need to check is an analogue of Proposition \ref{prop:inversionCorrespondence} for each type of crossings that satisfy the condition (a) and to keep track of the sign factors carefully. Since it is straightforward to mimic what we did in the proof of Lemma \ref{lem:parametrizedInvertedStateSumFormula}, we will show this just for the two types of crossings shown in Fig. \ref{fig:2newcrossings}. Their highway diagrams are depicted in Fig. \ref{fig:2newcrossingsHighwayDiagrams}. 
\begin{figure}[ht]
    \centering
    \includegraphics[scale=0.5]{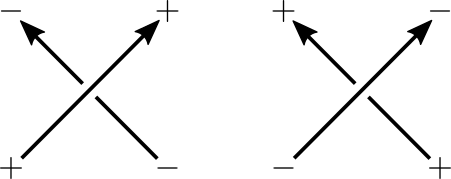}
    \caption{Two examples of new crossings}
    \label{fig:2newcrossings}
\end{figure}
\begin{figure}[ht]
    \centering
    \includegraphics[scale=0.55]{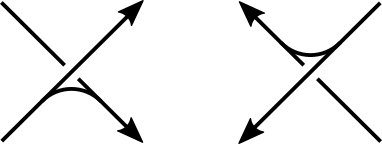}
    \caption{Inverted highway diagrams of the two example crossings}
    \label{fig:2newcrossingsHighwayDiagrams}
\end{figure}
The analogue of Proposition \ref{prop:inversionCorrespondence} for these two can be summarized as in Fig. \ref{fig:newInversionCorrespondence}. 
\begin{figure}[ht]
    \centering
    \includegraphics[scale=0.4]{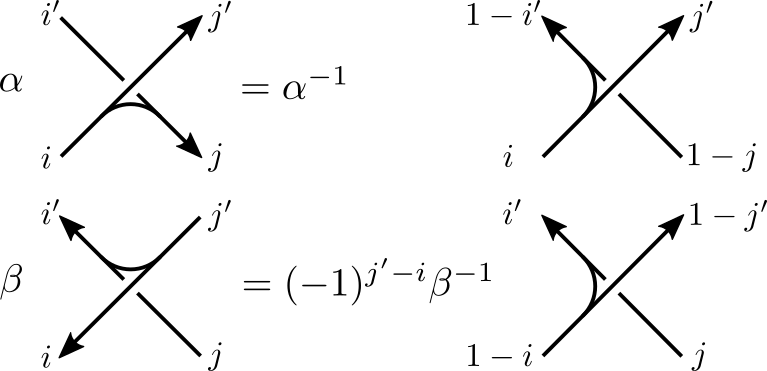}
    \caption{Analogue of Proposition \ref{prop:inversionCorrespondence} for the two example crossings}
    \label{fig:newInversionCorrespondence}
\end{figure}
Observe that while there is no sign factor for the first one, while there is a sign factor of $(-1)^{j'-i}$ for the second one. As before, the sign factors count $-1$ to the number of times a given multi-cycle is coming out of the $-$-signed segments. 
Therefore, by exactly the same argument as before, we see that $(-1)^s$ is the correct overall sign factor we get when we invert the state sum. 
\end{proof}

In Appendix \ref{sec:fiberedKnotsInversionData}, we find that Theorem \ref{thm:beyondHomogeneousBraids} can be used to compute $F_K$ for all fibered knots up to 10 crossings. 
This provides good evidence to the following conjecture. 
\begin{conj}
For any fibered knot $K$, the coefficients of $F_K$ are Laurent polynomials in $q$. 
\end{conj}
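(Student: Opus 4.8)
The plan is to deduce the conjecture from Theorem~\ref{thm:beyondHomogeneousBraids}. That theorem already shows that whenever a braid closure admits a sign assignment satisfying conditions (a) and (b), the resulting $F_K$ has Laurent-polynomial coefficients. So the conjecture reduces to the purely existential statement: \emph{every fibered knot admits a braid diagram together with a $+/-$ sign assignment on its segments satisfying (a) and (b)}. The homogeneous case (Theorem~\ref{thm:mainTheorem}), together with Stallings' theorem \cite{Stallings} that homogeneous braids are fibered, is the prototype; the task is to upgrade this to all fibered knots, and the Appendix data (all fibered knots up to 10 crossings) shows such assignments exist in every tested case.

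First I would fix a closed-braid representative $\beta$ of the fibered knot $K$ and analyze condition (a) separately from (b). Condition (a) is purely local: it asks that the $-$-signed segments glue up, at each crossing, into an oriented multi-cycle following the braid. This is a combinatorial flow condition, and for any proposed set of $-$-segments one checks it crossing-by-crossing as in the two cases of Figure~\ref{fig:2newcrossings}. The substantive content is condition (b), absolute convergence, which by the degree bounds \eqref{eq:bigO} is equivalent to a \emph{properness} statement for the $x^{-1}$-degree function on the lattice of internal weights: for every $M$ only finitely many weight configurations may contribute below degree $M$.

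Then I would connect properness to the fibered hypothesis through the classical limit. By \eqref{eq:specializationOfParametrizedRmatrices} and Lemma~\ref{lem:parametrizedInvertedStateSumFormula}, the $q=1$ specialization of $Z^{\mathrm{inv}}(\beta)$ is $1/\Delta_K(x)$, whose expansion as a power series in $x^{-1}$ exists precisely when the top-degree coefficient of $\Delta_K$ is a unit, i.e. when $\Delta_K$ is monic --- a necessary consequence of fiberedness. So at $q=1$ the required degree function is automatically proper, controlled by the top fermionic cycle computing the leading term of $\Delta_K$. The heart of the argument is to choose the sign assignment so that this classical properness persists at generic $q$; I would try to realize the $-$-signed multi-cycle as a combinatorial shadow of the fibration, for instance tracing a curve adapted to the monodromy of the open book so that the leading $x^{-1}$-degrees of the inverted blocks line up with the genus filtration coming from the fiber surface.

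The main obstacle is exactly this last step: lifting properness from $q=1$ to generic $q$. At $q=1$ only a homological (degree) condition is needed, but at generic $q$ the degree function must grow in \emph{every} weight direction simultaneously, with no cancellation among the Laurent-polynomial coefficients that could secretly lower the effective degree. A promising route is an induction on the construction of the fiber surface: a fibered knot's fiber is built from a disk by plumbing and deplumbing Hopf bands (Giroux--Goodman), and each positive or negative Hopf-band plumbing corresponds to a controlled local modification of the braid --- essentially inserting one of the basic blocks of Figure~\ref{fig:4basicblocks} or one of the new crossings of Figure~\ref{fig:2newcrossings}. If one can show each such move preserves the existence of a convergent sign assignment (and that deplumbings can be avoided or absorbed), the conjecture would follow; making the deplumbing steps harmless, and ruling out higher-$q$ cancellations, is where I expect the real work to lie.
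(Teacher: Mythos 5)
The statement you are addressing is a \emph{conjecture} in the paper: the paper itself offers no proof, only evidence, namely Theorem \ref{thm:beyondHomogeneousBraids} combined with the explicit sign assignments in Appendix \ref{sec:fiberedKnotsInversionData} (covering all fibered knots up to 10 crossings) and a heuristic positive-area argument from the holomorphic-curve interpretation of $F_K$. Your reduction of the conjecture to the existential statement ``every fibered knot admits a braid diagram and a sign assignment satisfying conditions (a) and (b)'' coincides with the paper's own framing --- indeed the paper lists exactly this among its open questions (``Does it contain all fibered knots?'') --- so the entire content of a proof would be that existential step, and your proposal does not supply it, as you yourself acknowledge in the final paragraph.

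Beyond incompleteness, two concrete steps in your outline would fail as stated. First, the $q=1$ argument runs in the wrong logical direction. Condition (b) is a finiteness statement about weight configurations (for each $M$, only finitely many configurations contribute in $x^{-1}$-degree $<M$), and the degree bounds \eqref{eq:bigO} it rests on are independent of $q$; so ``properness'' is the same combinatorial condition at $q=1$ and at generic $q$, and there is no lifting step to perform. What fiberedness (monicity of $\Delta_K$) actually gives you is that the rational function $1/\Delta_K(x)$ \emph{has} an $x^{-1}$-expansion; it does not follow that any particular state-sum model computes that expansion configuration-by-configuration --- properness implies the expansion exists, not conversely, and the converse is precisely the conjecture. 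Second, the Giroux--Goodman induction faces more than bookkeeping: Hopf-band plumbing is a Murasugi sum along a disk in the fiber surface, not a local braid move (only plumbing at the edge of the braided surface is realized by Markov stabilization, which is the situation of Fig.\ \ref{fig:stabilizations}), and deplumbings genuinely occur --- not every fiber surface is obtained from the disk by plumbings alone --- so ``avoiding or absorbing'' them is itself an unsolved problem, not a technical refinement. As it stands, your proposal is a sensible research program that reproduces the paper's reduction and correctly locates the difficulty, but it is not a proof, and the statement remains open.
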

In fact, this is what is expected from the point of view of holomorphic curve counting. In \cite{EGGKPS} (see also \cite{DE}), $F_K$ was interpreted as a count of open holomorphic curves in $T^*S^3$ (or the resolved conifold) in the presence of the knot complement Lagrangian. Moreover, analogously to the knots-quivers correspondence \cite{KRSS0,KRSS,EKL,EKL2} which is for the knot conormal instead of the knot complement, $F_K$ is conjectured \cite{Kuch} to be realized as the quiver partition function of a quiver generated by a set of basic disks. In case $K$ is fibered, the knot complement Lagrangian can be shifted off completely from the zero-section $S^3$, and there is a positive area argument\footnote{I thank Tobias Ekholm for explaining this to me.} which suggests that in this case all the quiver nodes must have positive $x$-degree, meaning the coefficients of $F_K$ as a power series in $x$ must be Laurent polynomials.

\section{Inverted Habiro series}\label{sec:invertedHabiro}
\subsection{Inverting the Habiro series}
It is a well-known theorem of Habiro \cite{Hab} that for any knot $K$, there is a sequence of Laurent polynomials $a_m(K)\in \mathbb{Z}[q,q^{-1}]$ such that the colored Jones polynomials $J_K(V_{n})$, colored by the $n$-dimensional irreducible representation $V_n$ of $\mathfrak{sl}_2$, can be decomposed as
\[
J_K(V_{n}) = \sum_{m=0}^{\infty}a_m(K)\prod_{j=1}^{m}(x+x^{-1}-q^j-q^{-j})\bigg\vert_{x=q^n}.
\]

The purpose of this section is to make a curious observation that for some simple knots and links the sequence $\{a_m(K)\}_{m\geq 0}$ can be naturally extended to negative values of $m$. 
The simplest example is the figure-eight knot. In this case we have $a_m(\mathbf{4}_1)=1$ for any $m\geq 0$, so it is very natural to set $a_m(\mathbf{4}_1)=1$ for any $m\in \mathbb{Z}$. Once we have such an extension of the sequence $\{a_m(K)\}$, we can ``invert" the Habiro series in the following way:
\[
\sum_{m=0}^{\infty}a_m(K)\prod_{j=1}^{m}(x+x^{-1}-q^j-q^{-j}) \leadsto -\sum_{m=1}^{\infty}\frac{a_{-m}(K)}{\prod_{j=0}^{m-1}(x+x^{-1}-q^j-q^{-j})}.
\]
The series on the right-hand side is what we will call an \emph{inverted Habiro series}. As we will see through examples, $a_{m}(K)$ with $m<0$ are in general not necessarily Laurent polynomials, but rather Laurent power series in $q$ with integer coefficients. 

A priori the ``inverted Habiro series" is a very different object compared to the original Habiro series. For one thing, the specialization $x=q^n$ doesn't make sense any more for the inverted Habiro series, as it has poles at $q^\mathbb{Z}$. For another thing, the inverted Habiro series can be expanded into a power series in $x$ or $x^{-1}$, which is not possible for the usual Habiro series. The distinction is clearer in the classical limit $q\rightarrow 1$ where the usual Habiro series take values in $\mathbb{Z}[[x+x^{-1}-2]]$ (a completion at a finite place $x+x^{-1}=2$) whereas the inverted Habiro series take values in $\mathbb{Q}[[\frac{1}{x+x^{-1}-2}]]$ (a completion at an infinite place $x+x^{-1}=\infty$). Morally, this is the same as what we did in the previous section; whether it is a state sum or a Habiro series, we are inverting it to make it convergent near $x+x^{-1}=\infty$. 

Our main conjecture of this section is that $F_K$ can be thought of as the inverted Habiro series. 
\begin{conj}
For any knot $K$ with $\Delta_K(x)\neq 1$, it has an inverted Habiro series, and it agrees with the $F_K$ in the sense that 
\[
F_K(x,q) = -(x^{\frac{1}{2}}-x^{-\frac{1}{2}})\sum_{m=1}^{\infty}\frac{a_{-m}(K)}{\prod_{j=0}^{m-1}(x+x^{-1}-q^j-q^{-j})}
\]
when the right-hand side is expanded into a power series in $x$. 
\end{conj}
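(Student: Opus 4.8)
The plan is to reduce the conjecture to Theorem \ref{thm:mainTheorem} by proving that the inverted Habiro series has the same $\hbar$-expansion as the Melvin-Morton-Rozansky expansion, so that the two must agree coefficient-by-coefficient as Laurent polynomials in $q$. The first step is purely algebraic: writing $C_m(x,q) := \prod_{j=1}^{m}(x+x^{-1}-q^j-q^{-j})$ and extending this to all $m\in\mathbb{Z}$ by the usual empty/negative-product convention, one checks that $C_{-m}(x,q) = 1/\prod_{j=0}^{m-1}(x+x^{-1}-q^j-q^{-j})$ for $m\geq 1$, since $x+x^{-1}-q^j-q^{-j}$ is invariant under $j\mapsto -j$. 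Thus the inverted Habiro series is literally $-(x^{1/2}-x^{-1/2})\sum_{m<0}a_m(K)C_m$, and the conjecture becomes the statement that the two one-sided sums $\sum_{m\geq 0}a_m C_m$ and $-\sum_{m<0}a_m C_m$ are expansions of one and the same object near $x=1$ and near $x=\infty$ respectively --- precisely the cyclotomic analogue of the motivating identity $\sum_{n\geq 0}z^n = -\sum_{n<0}z^n$ from the introduction.

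Second, I would establish existence of the extension $\{a_{-m}(K)\}_{m\geq 1}$ together with the recursion that drives everything. Since $J_K(V_n)$ is annihilated by the quantum $A$-polynomial $\hat{A}_K$, expanding $\hat{A}_K J_K = 0$ in the cyclotomic basis $\{C_m\}$ yields a linear $q$-difference recursion $\sum_k P_k(q,q^m)\,a_{m+k} = 0$ whose coefficients $P_k$ are Laurent polynomials in $q$ and $q^m$. Because these coefficients make sense for every $m\in\mathbb{Z}$, one can run the recursion backward to define $a_{-m}(K)$; the coefficients produced are genuine Laurent power series in $q$ rather than Laurent polynomials, consistent with the observation in the text. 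The backward recursion is well defined provided its leading coefficient is invertible in the relevant completed ring, and since the classical limit of this leading coefficient is governed by the Alexander polynomial, the hypothesis $\Delta_K(x)\neq 1$ is exactly what is needed here. This step proves the first half of the conjecture, that $K$ has an inverted Habiro series.

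Third, I would convert the recursion into a functional equation for the generating functions and run the telescoping argument that underlies the motivating example. Set $G_+ := \sum_{m\geq 0}a_m C_m$ and $G_- := -\sum_{m<0}a_m C_m$. Using the way $\hat{A}_K$ acts on the cyclotomic building blocks $C_m$, the recursion telescopes to give $\hat{A}_K\,G_+ = B_+$ and $\hat{A}_K\,G_- = B_-$, where $B_\pm$ are finite boundary terms coming from the truncation at $m=0$. The crucial point --- exactly as in the computation $(1-z)\sum_{n\geq 0}z^n = a_0$ versus $(1-z)(-\sum_{n<0}z^n) = a_{-1}$ --- is that the two-sided recursion forces $B_+ = B_-$, so that $\hat{A}_K(G_+ - G_-) = 0$. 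Working order-by-order in $\hbar = \log q$, Rozansky's results (as used in the proof of Theorem \ref{thm:mainTheorem}) show that each $\hbar^n$-coefficient is a rational function of $x$; the equality $\hat{A}_K(G_+-G_-)=0$ then forces $G_+$ and $G_-$ to be the expansions of this same rational function near $x=1$ and near $x=\infty$ respectively. Since the $\hbar$-expansion of $G_+$ is by construction the Melvin-Morton-Rozansky expansion, and Theorem \ref{thm:mainTheorem} identifies the $x=\infty$ resummation of that expansion with $F_K$, the $x$-power-series expansion of $-(x^{1/2}-x^{-1/2})G_-$ equals $F_K$.

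The main obstacle I anticipate is the precise bookkeeping in the third step: one must pin down how $\hat{A}_K$ acts across the reciprocal cyclotomic blocks $C_m$ with $m<0$ and verify that the boundary terms $B_+$ and $B_-$ genuinely coincide rather than merely having the same classical limit. Controlling this near the turning point $m=0$ --- where the forward product $\prod_{j=1}^m$ and the backward reciprocal $1/\prod_{j=0}^{m-1}$ meet --- is the delicate heart of the proof, and it is also where the two different domains of convergence ($x=1$ versus $x=\infty$) must be reconciled solely at the level of rational functions, exactly as in the remark following Theorem \ref{thm:mainTheorem}. A secondary technical point is confirming that the backward recursion's leading coefficient is a unit in the appropriate ring for every $K$ with $\Delta_K\neq 1$, which guarantees both the existence and the $q$-power-series nature of the extended coefficients $a_{-m}(K)$.
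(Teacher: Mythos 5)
First, a framing point: the statement you are proving is left as a \emph{conjecture} in the paper --- there is no proof there to compare against. The paper supports it only with examples (trefoils, figure-eight, Whitehead, Borromean rings) and with the quantum $C$-polynomial recursion heuristic, which is in fact quite close to your outline; so the real question is whether your argument closes the gaps the paper deliberately leaves open. It does not, for two concrete reasons.

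Your step 2 (existence and well-definedness of the backward extension) fails as stated. The paper's own example $\mathbf{5}_2$ shows that running the recursion backward does \emph{not} determine the inverted coefficients: $a_{-1}(\mathbf{5}_2)$ is a free parameter of the backward recursion, and the paper has to fix it by an additional limiting ansatz (expressing $a_{-1}$ through $a_{-M}$ and sending $M\to\infty$), whose convergence and correctness are themselves unproven in general; for the $n$-twist knot the ambiguity is $(|n|-1)$-dimensional. Your claim that invertibility of the leading coefficient is guaranteed by $\Delta_K(x)\neq 1$ is unsubstantiated, and it misreads the hypothesis: the paper imposes $\deg\Delta_K>0$ only so that $\frac{1}{\Delta_K(x)}=O(x)$ near $x=0$, i.e.\ so that the expansion of the right-hand side has the correct shape, not to make any recursion coefficient a unit.

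Your step 3 does not establish the identification even granting step 2. From $\hat{A}_K G_+=B_+$, $\hat{A}_K G_-=B_-$ and $B_+=B_-$ you cannot conclude that $G_+$ and $G_-$ are expansions of one rational function: the two series live in different completions (near $x=1$ and near $x=\infty$), so $G_+-G_-$ is not an element of any single ring on which $\hat{A}_K$ acts, and order by order in $\hbar$ the inhomogeneous equation $\hat{A}_K G=B$ can have several solutions in each completion --- you need a uniqueness statement (control of the kernel of $\hat{A}_K$, equivalently of initial conditions at each $\hbar$-order), which is exactly the missing content. The appeal to ``Rozansky's results as used in Theorem \ref{thm:mainTheorem}'' does not transfer: that proof works because the state sum and its inversion are both specializations of \emph{one and the same} parametrized rational function $1/\det(I-\mathcal{B})$ via the Foata--Zeilberger formula (Lemma \ref{lem:parametrizedInvertedStateSumFormula}), so each $\hbar^n$-coefficient on both sides is literally the same rational function acted on by the same operator $D_n$. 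No analogous parametrized object exists for the Habiro series, and producing one is precisely the open problem. (A smaller inaccuracy: the assertion that the $\hbar$-expansion of $G_+$ is ``by construction'' the Melvin--Morton--Rozansky expansion is not by construction; it is a nontrivial theorem in the circle of ideas of \cite{Hab, Willetts}.)
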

\begin{rmk}
In this conjecture, we have imposed the condition that $\deg \Delta_K(x)>0$ to make sure that $\frac{1}{\Delta_K(x)} = O(x)$ near $x=0$. 
\end{rmk}

Before we proceed to study various examples, we summarize some useful identities in the context of inverted Habiro series. 

\begin{prop}\label{prop:powerSeriesToInvertedHabiro}
If we write
\[
-(x^{\frac{1}{2}}-x^{-\frac{1}{2}})\sum_{m=1}^{\infty}\frac{a_{-m}(K;q)}{\prod_{k=0}^{m-1}(x+x^{-1}-q^k-q^{-k})} = x^{\frac{1}{2}}\sum_{j\geq 0}f_j(K;q)x^j,
\]
then
\begin{align}\label{eq:powerSeriesToInvertedHabiro}
f_j(K;q) &= \sum_{i=0}^{j}\qbin{j+i}{2i}a_{-i-1}(K;q),\\
a_{-i-1}(K;q) &= \sum_{j=0}^{i} (-1)^{i+j}\qbin{2i}{i-j}\frac{[2j+1]}{[i+j+1]} f_j(K;q).\nonumber
\end{align}
\end{prop}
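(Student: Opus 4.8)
The plan is to expand the left-hand side directly as a power series in $x$, read off its coefficients to obtain the first identity, and then recognize the second identity as the inverse of the resulting triangular linear system. First I would rewrite each summand using the factorization $x+x^{-1}-q^k-q^{-k} = x^{-1}(x-q^k)(x-q^{-k})$, so that
\[
\prod_{k=0}^{i}(x+x^{-1}-q^k-q^{-k}) = x^{-(i+1)}(x-1)\prod_{k=-i}^{i}(x-q^k).
\]
Combined with $-(x^{\frac{1}{2}}-x^{-\frac{1}{2}}) = -x^{-\frac{1}{2}}(x-1)$ and the reindexing $m=i+1$, the $(x-1)$ factors cancel and the left-hand side collapses to
\[
x^{\frac{1}{2}}\sum_{i\geq 0}a_{-i-1}(K;q)\,g_i(x), \qquad g_i(x):=\frac{-x^i}{\prod_{k=-i}^{i}(x-q^k)}.
\]
Everything then reduces to expanding the single function $g_i(x)$ as a power series in $x$.

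Next I would compute that expansion. Pulling out the constant $\prod_{k=-i}^i(-q^k)=-1$ gives $g_i(x) = x^i/(xq^{-i};q)_{2i+1}$, and the $q$-binomial theorem $1/(y;q)_n = \sum_{j\geq 0}\qbin{n+j-1}{j}_q y^j$ yields, after setting $J=i+j$, that the coefficient of $x^J$ in $g_i(x)$ equals $\qbin{i+J}{2i}_q\, q^{-i(J-i)}$. Converting the unbalanced $q$-binomial to the balanced one via $\qbin{n}{k}_q = q^{\frac{k(n-k)}{2}}\qbin{n}{k}$ produces a factor $q^{i(J-i)}$ that cancels the $q^{-i(J-i)}$ exactly, leaving the clean coefficient $\qbin{i+J}{2i}$. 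Collecting coefficients of $x^J$ gives $f_J = \sum_{i}\qbin{J+i}{2i}a_{-i-1}$; since $\qbin{J+i}{2i}$ vanishes unless $i\leq J$, the sum is finite and this is precisely the first formula.

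It remains to prove the second formula, which is the inverse relation. The matrix $M_{Ji}=\qbin{J+i}{2i}$ is lower triangular and unipotent (with $M_{JJ}=\qbin{2J}{2J}=1$), hence invertible over $\mathbb{Z}[q,q^{-1}]$, so the second formula is equivalent to the single $q$-binomial inversion identity
\[
\sum_{j=k}^{i}(-1)^{i+j}\qbin{2i}{i-j}\frac{[2j+1]}{[i+j+1]}\qbin{j+k}{2k} = \delta_{ik}.
\]
I would establish this as a terminating $q$-hypergeometric summation. The numbers $\frac{[2j+1]}{[i+j+1]}\qbin{2i}{i-j}$ are the $q$-analog of the ballot/Catalan-triangle numbers that classically invert $\binom{j+i}{2i}$, and the identity can be proved either by recognizing it as the orthogonality relation for the $q$-Chebyshev-type polynomials $\prod_{k=0}^{m-1}(u-q^k-q^{-k})$ in $u=x+x^{-1}$, or mechanically by the $q$-Zeilberger algorithm, or by induction on $i-k$ using the $q$-Pascal recursion for $\qbin{\,}{\,}$.

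This inversion step is the main obstacle, as it is the only part not handled by a direct expansion; the remainder is bookkeeping with the factorization and the $q$-binomial theorem. Once the identity is in hand, applying the inverse matrix to $f_J = \sum_i M_{Ji}\,a_{-i-1}(K;q)$ yields the stated expression for $a_{-i-1}(K;q)$ in terms of the $f_j(K;q)$, completing the proof.
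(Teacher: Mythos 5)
Your proposal is correct and follows essentially the same route as the paper's (unpublished, commented-out) proof sketch: the first identity via direct power-series expansion using the $q$-binomial (Heine) theorem, and the second via inverting the resulting unipotent triangular system, where your orthogonality identity $\sum_{j=k}^{i}(-1)^{i+j}\qbin{2i}{i-j}\frac{[2j+1]}{[i+j+1]}\qbin{j+k}{2k}=\delta_{ik}$ is exactly the Bailey-pair inversion formula the paper invokes. Your treatment of the first identity is in fact more detailed than the paper's, and the inversion identity you reduce to is a known, true result provable by the methods you name.
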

Using this proposition and Theorem \ref{thm:mainTheorem}, we can easily compute the inverted Habiro coefficients for any homogeneous braid knot that is not an unknot. 
\begin{rmk}
Proposition \ref{prop:powerSeriesToInvertedHabiro} says that the inverted Habiro series and the power series in $x$ contain the same amount of information. Still, the inverted Habiro series is an especially nice way of presenting $F_K$, as it is manifestly Weyl-symmetric. What is more, it helps us uncover various surgery formulas, as we we will see in later subsections. 
\end{rmk}

\subsection{Inverted Habiro series for some simple knots and links}\label{subsec:invertedHabiroForSimpleKnots}
The $a_m(K)$ for the figure-eight and the trefoil knots of both handedness are particularly simple:
\[
a_m(\mathbf{4}_1) = 1,\quad a_m(\mathbf{3}_1^l) = (-1)^m q^{\frac{m(m+3)}{2}},\quad a_m(\mathbf{3}_1^r) = (-1)^m q^{-\frac{m(m+3)}{2}}.
\]
It is straightforward to extend these to negative $m$. From this, we obtain the inverted Habiro expansions of the corresponding $F_K$'s:
\begin{equation}
    F_{\mathbf{4}_1}(x,q) = -(x^{\frac{1}{2}}-x^{-\frac{1}{2}})\sum_{m=1}^{\infty}\frac{1}{\prod_{j=0}^{m-1}(x+x^{-1}-q^j-q^{-j})},
\end{equation}
\begin{align}
F_{\mathbf{3}_1^l}(x,q) &= -(x^{\frac{1}{2}}-x^{-\frac{1}{2}})\sum_{m=1}^{\infty}\frac{(-1)^m q^{\frac{m(m-3)}{2}}}{\prod_{j=0}^{m-1}(x+x^{-1}-q^j-q^{-j})}\\
&= q^{-1}(x^{\frac{1}{2}}-x^{-\frac{1}{2}})\sum_{n\geq 0}\frac{(-1)^n q^{\frac{n(n-1)}{2}}}{\prod_{j=0}^{n}(x+x^{-1}-q^j-q^{-j})},\nonumber
\end{align}
\begin{align}
F_{\mathbf{3}_1^r}(x,q) &= -(x^{\frac{1}{2}}-x^{-\frac{1}{2}})\sum_{m=1}^{\infty}\frac{(-1)^m q^{-\frac{m(m-3)}{2}}}{\prod_{j=0}^{m-1}(x+x^{-1}-q^j-q^{-j})}\\
&= q(x^{\frac{1}{2}}-x^{-\frac{1}{2}})\sum_{n\geq 0}\frac{(-1)^n q^{-\frac{n(n-1)}{2}}}{\prod_{j=0}^{n}(x+x^{-1}-q^j-q^{-j})}.\nonumber
\end{align}
Their power series expansions agree with the known results in \cite{GM}. 

The Habiro coefficients for the Whitehead link and the Borromean rings are given in \cite{Hab0}. Inverting them, we obtain the following expressions for the corresponding $F_K$'s:
\begin{equation}
F_{\mathbf{Wh}}(x_1,x_2,q) = q^{-\frac{1}{2}}(x_1^{\frac{1}{2}}-x_1^{-\frac{1}{2}})(x_2^{\frac{1}{2}}-x_2^{-\frac{1}{2}})\sum_{n\geq 0}\frac{(-1)^nq^{-\frac{n(n+1)}{2}}(q^{n+1})_n}{\prod_{j=0}^{n}(x_1+x_1^{-1}-q^j-q^{-j})(x_2+x_2^{-1}-q^j-q^{-j})},
\end{equation}
\begin{equation}
F_{\mathbf{Bor}}(x_1,x_2,x_3,q) =
{\scriptstyle (x_1^{\frac{1}{2}}-x_1^{-\frac{1}{2}})(x_2^{\frac{1}{2}}-x_2^{-\frac{1}{2}})(x_3^{\frac{1}{2}}-x_3^{-\frac{1}{2}})\sum_{n\geq 0}\frac{(-1)^nq^{-\frac{3n^2+n}{2}}(q^{n+1})_n^2}{\prod_{j=0}^{n}(x_1+x_1^{-1}-q^j-q^{-j})(x_2+x_2^{-1}-q^j-q^{-j})(x_3+x_3^{-1}-q^j-q^{-j})}}.
\end{equation}
They agree with the results in Examples \ref{eg:Whitehead} and \ref{eg:Borromean} (the Whitehead link formula presented here agrees with \cite{Park} and is mirror to the one we used in Example \ref{eg:Whitehead}).

\subsection{Positive integer surgeries}
One surprising aspect of inverted Habiro series is that it uncovers some surgery formulas that were not known before. 
\begin{prop}
The following identity holds
\begin{equation}\label{eq:minus1surgery}
\frac{1}{\prod_{j=1}^{n}(x+x^{-1}-q^j-q^{-j})}\bigg\vert_{x^u\mapsto q^{u^2}} = \frac{q^{n^2}}{(q^{n+1})_n},
\end{equation}
where $f(x)\bigg\vert_{x^u \mapsto q^{u^2}}$ is a shorthand notation for ``first expand $f(x)$ into a power series in $x$ and then replace each $x^u$ by $q^{u^2}$". In other words, 
\[
\sum_{k=0}^{\infty}(q^{(n+k)^2}-q^{(n+k+1)^2})\qbin{2n+k}{2n} = \frac{q^{n^2}}{(q^{n+1})_n}.
\]
\end{prop}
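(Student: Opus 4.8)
The plan is to prove the second displayed identity---the one with the explicit $q$-series---and to deduce the first (the $x^u\mapsto q^{u^2}$ statement) from it by a bookkeeping argument.

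First I would turn the left-hand side into an honest power series in $x$. Using $x+x^{-1}-q^j-q^{-j}=x^{-1}(1-q^jx)(1-q^{-j}x)$, one gets
\[
\frac{1}{\prod_{j=1}^n(x+x^{-1}-q^j-q^{-j})}=\frac{x^n}{\prod_{j=1}^n(1-q^jx)(1-q^{-j}x)}=:\sum_{u\ge n}c_ux^u ,
\]
with $c_u=0$ for $u<n$. The elementary point is that the partial sums of the $c_u$ are balanced Gaussian binomials: multiplying by $\tfrac{1}{1-x}$ turns the denominator into $\prod_{j=-n}^{n}(1-q^jx)$, so the coefficient of $x^N$ in $\tfrac{1}{1-x}\sum_u c_u x^u$ is the complete homogeneous symmetric function $h_{N-n}$ of the geometric progression $q^{-n},q^{-n+1},\dots,q^{n}$, which equals $\qbin{N+n}{2n}$. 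Hence $\sum_{u=n}^{N}c_u=\qbin{N+n}{2n}$. Applying the substitution gives $\sum_{u\ge n}c_uq^{u^2}$, and since the partial sums are $S_{n+k}=\qbin{2n+k}{2n}$ (with $S_{n-1}=0$) and $q^{u^2}\to0$ $q$-adically, a single summation by parts produces exactly $\sum_{k\ge0}\big(q^{(n+k)^2}-q^{(n+k+1)^2}\big)\qbin{2n+k}{2n}$. This both justifies the ``In other words'' reformulation and reduces the whole statement to one $q$-series identity.

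It therefore remains to evaluate $\sum_{k\ge0}\big(q^{(n+k)^2}-q^{(n+k+1)^2}\big)\qbin{2n+k}{2n}$. Converting to the standard convention via $\qbin{2n+k}{2n}=q^{-nk}\qbin{2n+k}{2n}_q$ and using $(q^{n+1})_n=(q;q)_{2n}/(q;q)_n$, this is equivalent to
\[
\sum_{k\ge0}q^{nk+k^2}\,(1-q^{2n+2k+1})\,\qbin{2n+k}{2n}_q=\frac{(q;q)_n}{(q;q)_{2n}} .
\]
Writing the summand as $t_k=q^{nk+k^2}(1-q^{2n+2k+1})\tfrac{(q^{2n+1};q)_k}{(q;q)_k}$ one sees the very-well-poised signature $1-q^{2n+1}q^{2k}$, so I would finish by invoking a nonterminating very-well-poised basic hypergeometric summation with base parameter $q^{2n+1}$ (equivalently, by exhibiting an explicit creative-telescoping certificate obtained from $q$-Zeilberger). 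This last step is the main obstacle: the sum $\sum_u c_uq^{u^2}$ is theta-like, and the first-order $q$-recurrence one would need for a naive antidifference is, after one checks, equivalent to the identity itself, so no elementary telescoping closes it---some genuine $q$-hypergeometric input is required to collapse the sum to the product $\frac{q^{n^2}}{(q^{n+1})_n}$.
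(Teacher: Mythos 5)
Your bookkeeping is correct but it is not where the content of the proposition lies, and the one step that carries the content is missing. The factorization $x+x^{-1}-q^j-q^{-j}=x^{-1}(1-q^jx)(1-q^{-j}x)$, the identification of the partial sums with $\qbin{N+n}{2n}$ via complete homogeneous symmetric functions, the summation by parts, and the conversion to the unbalanced form
\[
\sum_{k\ge 0}q^{nk+k^2}\,(1-q^{2n+2k+1})\,\qbin{2n+k}{2n}_q=\frac{(q;q)_n}{(q;q)_{2n}}
\]
are all fine, but together they only reproduce the ``In other words'' display that is already part of the statement. The proposition \emph{is} the evaluation of this sum, and you do not prove it: you propose to ``invoke a nonterminating very-well-poised basic hypergeometric summation'' without naming the summation theorem or matching its parameters to your summand, and you then concede that this step ``is the main obstacle'' and that ``some genuine $q$-hypergeometric input is required.'' That is an acknowledged hole exactly where the theorem lives. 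For the record, the input you are gesturing at does exist and does close the argument: in Rogers' ${}_6\phi_5$ summation take $a=q^{2n+1}$, send two of the three free parameters to infinity, and set the remaining one to $d=q^{n+1}$; then $(d;q)_k/(aq/d;q)_k=1$, the power of $q$ becomes $q^{k(k-1)}(aq/d)^k=q^{k^2+nk}$, and the sum evaluates to $(aq;q)_\infty/(aq/d;q)_\infty=(q^{2n+2};q)_\infty/(q^{n+1};q)_\infty$, which after restoring the factor $(1-q^{2n+1})$ gives $1/(q^{n+1})_n=(q;q)_n/(q;q)_{2n}$ as required. Had you written this down, your proof would be complete; as submitted it is not.

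Your closing claim that ``no elementary telescoping closes it'' is also wrong, and it is precisely the point where you diverge from the paper. The paper's argument (written out for the more general substitution $x^u\mapsto \delta_{b,u\ (\mathrm{mod}\ p)}q^{u^2/p}$, of which this proposition is the case $p=1$, $b=0$) multiplies the sum by $(q)_{2n}$, writes the $k$-th term as $(q^{(n+k)^2}-q^{(n+k+1)^2})\,q^{-nk}(1-q^{k+1})\cdots(1-q^{k+2n})$, expands the finite product into $2^{2n}$ monomials, and pairs each monomial of the $q^{(n+k)^2}$ half with the monomial of the $q^{(n+k+1)^2}$ half obtained by flipping all the choices and reversing their order; each such pair telescopes in $k$ (with a shift depending on the monomial), so only finitely many boundary terms survive. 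You are right that the summand as a whole admits no first-order antidifference, but that does not rule out elementary telescoping: splitting the summand first and telescoping piece by piece is exactly the paper's mechanism, and it reduces the identity to collecting the surviving boundary terms. So the comparison is: your set-up coincides with the paper's forced expansion, your proposed finish (a very-well-poised summation) is viable but absent, and the obstruction you cite against the elementary alternative --- the route the paper actually takes --- is not an obstruction.
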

\begin{cor}[$-1$-surgery formula]
Suppose that 
\[
F_K(x,q) = -(x^{\frac{1}{2}}-x^{-\frac{1}{2}})\sum_{m=1}^{\infty}\frac{a_{-m}(K)}{\prod_{j=0}^{m-1}(x+x^{-1}-q^j-q^{-j})}.
\]
Then the $-1$-surgery formula of \cite{GM} can be written as
\[
\hat{Z}(S^3_{-1}(K)) = -q^{-\frac{1}{2}}\sum_{n\geq 0} a_{-n-1}(K)\frac{q^{n^2}}{(q^{n+1})_n}
\]
\end{cor}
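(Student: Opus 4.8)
The plan is to start from the $-1$-surgery formula of \cite{GM}, substitute the assumed inverted Habiro expansion of $F_K$, and then evaluate each resulting term by means of the identity \eqref{eq:minus1surgery}. Recall that the Gukov--Manolescu $-1$-surgery formula expresses $\hat{Z}(S^3_{-1}(K))$ as the Gaussian Laplace transform $x^u \mapsto q^{u^2}$ applied to $(x^{\frac{1}{2}}-x^{-\frac{1}{2}})F_K(x,q)$, expanded as a power series in $x$, up to an overall normalization of $q^{-\frac{1}{2}}$; that is,
\[
\hat{Z}(S^3_{-1}(K)) = q^{-\frac{1}{2}}\left[(x^{\frac{1}{2}}-x^{-\frac{1}{2}})F_K(x,q)\right]\bigg\vert_{x^u \mapsto q^{u^2}}.
\]
The goal is therefore to compute the right-hand side using the inverted Habiro presentation of $F_K$.

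The first and key step is a purely algebraic simplification. Reindexing the inverted Habiro series by $m = n+1$ and multiplying through by the prefactor, I would use $(x^{\frac{1}{2}}-x^{-\frac{1}{2}})^2 = x+x^{-1}-2$ to cancel against the $j=0$ factor in the denominator $\prod_{j=0}^{n}(x+x^{-1}-q^j-q^{-j})$. This collapses the prefactor entirely and yields the clean identity
\[
(x^{\frac{1}{2}}-x^{-\frac{1}{2}})F_K(x,q) = -\sum_{n\geq 0}\frac{a_{-n-1}(K)}{\prod_{j=1}^{n}(x+x^{-1}-q^j-q^{-j})},
\]
whose summands are exactly the left-hand sides appearing in \eqref{eq:minus1surgery}.

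Next I would apply the transform $x^u \mapsto q^{u^2}$ term by term and invoke \eqref{eq:minus1surgery} to replace each $\frac{1}{\prod_{j=1}^{n}(x+x^{-1}-q^j-q^{-j})}$ by $\frac{q^{n^2}}{(q^{n+1})_n}$. To justify the termwise evaluation, I would note that, expanded as a power series in $x$ near $x=0$, each factor $\frac{1}{x+x^{-1}-q^j-q^{-j}}$ is $O(x)$, so the $n$-th summand is $O(x^n)$; hence every coefficient of a fixed power of $x$ in $(x^{\frac{1}{2}}-x^{-\frac{1}{2}})F_K$ receives contributions from only finitely many $n$, and the transform, which acts coefficient-by-coefficient on the $x$-expansion, commutes with the infinite sum. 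Multiplying the resulting $-\sum_{n\geq 0} a_{-n-1}(K)\frac{q^{n^2}}{(q^{n+1})_n}$ by the normalization $q^{-\frac{1}{2}}$ then produces the claimed formula, and the Gaussian weight $q^{n^2}$ guarantees convergence of the final $q$-series.

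The main obstacle I anticipate is not the algebra, which telescopes cleanly, but rather pinning down the precise form of the $-1$-surgery formula of \cite{GM}---in particular the placement of the $(x^{\frac{1}{2}}-x^{-\frac{1}{2}})$ prefactor and the overall power of $q$---and confirming that the power-series-in-$x$ expansion implicit in that formula is the same one used in \eqref{eq:minus1surgery}. Once these conventions are reconciled, the identity \eqref{eq:minus1surgery} does all the analytic work and the corollary follows at once.
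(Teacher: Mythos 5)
Your proposal is correct and follows exactly the route the paper intends: the corollary is stated as an immediate consequence of the Proposition containing \eqref{eq:minus1surgery}, with the implicit argument being precisely your cancellation of $(x^{\frac{1}{2}}-x^{-\frac{1}{2}})^2 = x+x^{-1}-2$ against the $j=0$ factor, followed by termwise application of \eqref{eq:minus1surgery} under the Laplace transform $x^u\mapsto q^{u^2}$ of \cite{GM}. Your added justification that the $n$-th summand is $O(x^n)$, so the transform commutes with the infinite sum, is the right (and only) analytic point to check, and it makes explicit what the paper leaves unsaid.
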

\begin{eg}[$-1$-surgery on $\mathbf{4}_1$]
Let's take a look at the $-1$-surgery on the figure-eight knot $\mathbf{4}_1$. Since the inverted Habiro coefficients of $\mathbf{4}_1$ are all $1$, we immediately get the following formula
\[
\hat{Z}(-\Sigma(2,3,7)) = \hat{Z}(S^3_{-1}(\mathbf{4}_1)) = -q^{-\frac{1}{2}}\sum_{n\geq 0}\frac{q^{n^2}}{(q^{n+1})_n}. 
\]
Up to a prefactor, this is $\mathcal{F}_0(q)$, one of Ramanujan's mock theta functions of order 7. 
This is a result obtained in \cite{GM}, but the inverted Habiro series explains why we have such a nice expression. 
\end{eg}

It is a very useful fact that the right-hand side of \eqref{eq:minus1surgery} is a rational function in $q$. This means that even if we replace $q$ by $q^{-1}$, the right-hand side will still make sense as a power series in $q$. 
\begin{cor}
We have the identity
\begin{equation}\label{eq:plus1surgery}
\frac{1}{\prod_{j=1}^{n}(x+x^{-1}-q^j-q^{-j})}\bigg\vert_{x^u\mapsto q^{-u^2}} = \frac{(-1)^n q^{\frac{n(n+1)}{2}}}{(q^{n+1})_n}.
\end{equation}
\end{cor}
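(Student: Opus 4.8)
The plan is to deduce \eqref{eq:plus1surgery} directly from \eqref{eq:minus1surgery} by exploiting the $q\leftrightarrow q^{-1}$ symmetry of the left-hand side together with the rationality of the right-hand side, exactly as the preceding remark suggests. First I would make the $q$-dependence explicit by writing $f(x;q):=1/\prod_{j=1}^{n}(x+x^{-1}-q^j-q^{-j})$ and recording its power-series expansion $f(x;q)=\sum_u c_u(q)\,x^u$ in $x$. The crucial observation is that each factor $x+x^{-1}-q^j-q^{-j}$ is invariant under $q\mapsto q^{-1}$, so $f(x;q^{-1})=f(x;q)$ as rational functions of $x$; consequently their expansions coincide term by term, giving $c_u(q^{-1})=c_u(q)$ for every $u$.

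Next I would apply \eqref{eq:minus1surgery} with $q$ replaced by $q^{-1}$ throughout. Since the substitution $x^u\mapsto q^{-u^2}$ is literally the substitution $x^u\mapsto (q^{-1})^{u^2}$, the symmetry just established yields
\[
f(x;q)\big\vert_{x^u\mapsto q^{-u^2}}=\sum_u c_u(q)\,q^{-u^2}=\sum_u c_u(q^{-1})\,(q^{-1})^{u^2}=\frac{(q^{-1})^{n^2}}{(q^{-(n+1)};q^{-1})_n},
\]
where the middle equality uses $c_u(q^{-1})=c_u(q)$ and the last one is \eqref{eq:minus1surgery} evaluated at $q\mapsto q^{-1}$. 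It then remains only to simplify the inverted Pochhammer symbol: writing $1-q^{-m}=-q^{-m}(1-q^{m})$ for each factor and noting that the exponents $n+1,\dots,2n$ sum to $(3n^2+n)/2$, one finds $(q^{-(n+1)};q^{-1})_n=(-1)^n q^{-(3n^2+n)/2}(q^{n+1})_n$. Substituting this and collecting the total power of $q$, namely $-n^2+(3n^2+n)/2=n(n+1)/2$, produces the claimed value $(-1)^n q^{n(n+1)/2}/(q^{n+1})_n$.

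There is no genuinely hard step here: the whole content is the symmetry $c_u(q^{-1})=c_u(q)$, which transports the already-established \eqref{eq:minus1surgery} under $q\mapsto q^{-1}$, and the remainder is routine bookkeeping of signs and $q$-powers in the Pochhammer symbol. The one subtlety worth stating carefully is that the formal substitution $x^u\mapsto q^{-u^2}$ commutes with $q\mapsto q^{-1}$ \emph{only because} $f(x;q)$ is $q\leftrightarrow q^{-1}$ symmetric; it is precisely this invariance that makes the two substitutions agree and legitimizes reading off \eqref{eq:plus1surgery} as the $q\mapsto q^{-1}$ image of \eqref{eq:minus1surgery}.
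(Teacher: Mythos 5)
Your proposal is correct and is exactly the paper's (implicit) argument: the corollary is stated right after the remark that the right-hand side of \eqref{eq:minus1surgery} is rational in $q$, precisely so that it can be read off by applying $q\mapsto q^{-1}$ to \eqref{eq:minus1surgery}, using the $q\leftrightarrow q^{-1}$ invariance of the $x$-expansion coefficients of $1/\prod_{j}(x+x^{-1}-q^j-q^{-j})$. Your explicit verification of $c_u(q^{-1})=c_u(q)$ and the simplification $(q^{-(n+1)};q^{-1})_n=(-1)^n q^{-(3n^2+n)/2}(q^{n+1})_n$ just spell out the details the paper leaves to the reader.
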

Since the transform $x^u \mapsto q^{-u^2}$ when applied to $(x^\frac{1}{2}-x^{-\frac{1}{2}})F_K(x,q)$ is the $+1$-surgery formula given in \cite{GM}, we can make use of this identity to do $+1$-surgery. The result, written in terms of inverted Habiro coefficients, looks like
\begin{equation}\label{eq:plus1surgeryformula}
\hat{Z}(S^3_{+1}(K)) = q^{\frac{1}{2}}\sum_{n\geq 0} a_{-n-1}(K)\frac{(-1)^n q^{\frac{n(n+1)}{2}}}{(q^{n+1})_n}.
\end{equation}
However, one should be noted of the important difference between the $+1$-surgery formula of \cite{GM} and the above formula. In \eqref{eq:plus1surgeryformula}, we are making an implicit regularization of the $q$-series. This is because the left-hand side of \eqref{eq:plus1surgery} is a power series in $q^{-1}$, whereas we are using the right-hand side of \eqref{eq:plus1surgery} as a series in $q$. This is more evident when we write the transform \eqref{eq:plus1surgeryformula} in terms of $F_K(x,q) = x^{\frac{1}{2}}\sum_{j\geq 0}f_j(K;q)x^j$ using Proposition \ref{prop:powerSeriesToInvertedHabiro}. The result is 
\begin{equation}\label{eq:plus1surgeryregularization}
\hat{Z}(S^3_{+1}(K)) = q^{\frac{1}{2}}\sum_{j\geq 0}f_j(K;q)(q^{-j^2}-q^{-(j+1)^2})\qty(1-\frac{\sum_{|k|\leq j}(-1)^{k}q^{\frac{k(3k+1)}{2}}}{(q)_\infty}).
\end{equation}
In this form, it is clear what the effect of the implicit regularization \eqref{eq:plus1surgery} is; it adds the `regularization factor' of $1-\frac{\sum_{|k|\leq j}(-1)^{k}q^{\frac{k(3k+1)}{2}}}{(q)_\infty}$. Note that \eqref{eq:plus1surgeryregularization} can be written in the following form
\begin{equation}
\hat{Z}(S^3_{+1}(K)) = \frac{q^{\frac{1}{2}}}{(q)_\infty}\sum_{\substack{j\geq 0 \\ |k| > j}}f_j(K;q)(q^{-j^2}-q^{-(j+1)^2})(-1)^{k}q^{\frac{k(3k+1)}{2}}.
\end{equation}
This expression hints that the $+1$-surgery formula is related to indefinite theta functions. We study this connection further in Section \ref{sec:indefiniteTheta}. 

Since \eqref{eq:plus1surgeryregularization} is different from the $+1$-surgery formula of \cite{GM}, it may seem like we have a contradiction. However, recall that \cite{GM} conjectures that their surgery formula holds when it converges. Of course, when it converges, there is no need to make regularization. So we make the following conjecture. 
\begin{conj}[Regularized $+1$-surgery formula]
When the $+1$-surgery formula of \cite{GM} converges, we need to use their formula (i.e. without regularization factor). When it does not converge, we can to make regularization according to \eqref{eq:plus1surgeryregularization}, as long as the regularization converges. 
\end{conj}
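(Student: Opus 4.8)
The plan is to separate the conjecture into two assertions: \emph{consistency}, that the regularized prescription \eqref{eq:plus1surgeryregularization} agrees with the $+1$-surgery formula of \cite{GM} on the locus where the latter already converges, and \emph{correctness}, that the regularized series actually computes $\hat{Z}(S^3_{+1}(K))$. The algebraic backbone is already in place: by \eqref{eq:plus1surgery} (together with Proposition \ref{prop:powerSeriesToInvertedHabiro}) the substitution $x^u\mapsto q^{-u^2}$ of the building blocks reinterprets the rational functions $\frac{(-1)^nq^{n(n+1)/2}}{(q^{n+1})_n}$ as honest power series in $q$, so the object \eqref{eq:plus1surgeryregularization} is a well-defined $q$-series whenever it converges. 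A first step I would carry out is a clean convergence criterion: since the regularization factor contributes a term whose $q$-order grows like $\tfrac12 j^2$ (the leading pentagonal exponent $\tfrac{(j+1)(3j+2)}{2}$ beats $(j+1)^2$ with net order $\tfrac{j(j+1)}{2}$), the sum over $j$ in \eqref{eq:plus1surgeryregularization} converges as a formal power series as soon as the $q$-degrees of the $f_j(K;q)$ grow subquadratically, which one can control from Theorem \ref{thm:mainTheorem}.

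For consistency I would use the pentagonal number theorem to write the regularization factor as a normalized tail of $(q)_\infty$, exhibiting the difference between the two prescriptions, term by term, as an explicit correction. The task is then to show that this correction sums to zero precisely when the unregularized series of \cite{GM} is already well defined; concretely, on that locus the two expansions (in $q$ versus in $q^{-1}$) of each $\frac{(-1)^nq^{n(n+1)/2}}{(q^{n+1})_n}$ must contribute equally to the total, so that the discrepancy telescopes away. This I would expect to be a finite, checkable computation once the convergent locus is characterized.

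Correctness is the genuinely hard, and genuinely open, part, and it is the main obstacle: there is at present no general mathematical definition of $\hat{Z}$ for an arbitrary surgery $S^3_{+1}(K)$, so there is nothing to match the regularized series against in full generality. My approach would therefore be to anchor the statement on the cases where $\hat{Z}$ is independently defined --- when $S^3_{+1}(K)$ is Seifert-fibered or, more generally, plumbed --- by computing $\hat{Z}$ from the GPPV definition \cite{GPPV} and identifying it with \eqref{eq:plus1surgeryregularization} through the indefinite theta function techniques developed in Section \ref{sec:indefiniteTheta}. The real difficulty, beyond these examples, is twofold: proving that the regularized series is independent of the choices entering $F_K$ (the braid presentation and the splitting into the $f_j$'s), and establishing its convergence and conjectural quantum modularity in general. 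I expect the indefinite theta reformulation of Section \ref{sec:indefiniteTheta} to be the decisive tool, since it repackages the regularized sum as a manifestly convergent object; turning that repackaging into a definition of $\hat{Z}$ for general surgeries is what would ultimately settle the conjecture.
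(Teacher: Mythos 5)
First, be aware of what kind of statement this is: it is a conjecture, and the paper does not prove it. Its support in the paper consists of case-by-case verification (the $+1$-surgeries on $\mathbf{4}_1$, on the Whitehead link, and on $\mathbf{L7a1}$) together with the indefinite-theta computations of Section \ref{sec:indefiniteTheta} for plumbed manifolds. Your ``correctness'' half honestly recognizes that a general proof is out of reach (no general definition of $\hat{Z}$ exists to compare against) and proposes exactly the paper's own strategy: anchor on Seifert/plumbed surgeries, compute $\hat{Z}$ from \cite{GPPV}, and match via indefinite theta functions in the style of \cite{CFS}. That part, and your convergence estimate (the net gain of $q$-order $\tfrac{j(j+1)}{2}$ per term from the pentagonal tail), are consistent with the paper.

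The genuine gap is in your ``consistency'' half. You propose to prove that the correction between the two prescriptions ``sums to zero precisely when the unregularized series of \cite{GM} is already well defined.'' The conjecture asserts no such agreement; its entire content is a dichotomy, and the paper says explicitly that \eqref{eq:plus1surgeryregularization} \emph{is different} from the $+1$-surgery formula of \cite{GM} (``it may seem like we have a contradiction''), resolving the tension not by agreement but by prescribing which formula to use in which regime. Indeed, when the unregularized formula converges (i.e. the orders of $f_j(K;q)(q^{-j^2}-q^{-(j+1)^2})$ tend to $+\infty$), the regularized formula converges as well, since the factor $1-\frac{\sum_{|k|\leq j}(-1)^{k}q^{k(3k+1)/2}}{(q)_\infty}$ has order $\tfrac{(j+1)(3j+2)}{2}$ and only raises $q$-orders; but the discrepancy
\[
\sum_{j\geq 0}f_j(K;q)\,(q^{-j^2}-q^{-(j+1)^2})\,\frac{\sum_{|k|\leq j}(-1)^{k}q^{\frac{k(3k+1)}{2}}}{(q)_\infty}
\]
is then a convergent and generically nonzero $q$-series --- its $j=0$ term alone is $f_0(K;q)(1-q^{-1})/(q)_\infty$, and there is no telescoping mechanism forcing cancellation across $j$. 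So the step you describe as ``finite and checkable'' would fail, and succeeding at it would actually contradict the conjecture's instruction that one ``needs to use'' the unregularized formula when it converges, which is meaningful precisely because the regularized one then gives a different answer. (A smaller point: your worry about independence of ``the splitting into the $f_j$'s'' is vacuous, since $F_K$ is an invariant by Theorem \ref{thm:mainTheorem} and the $f_j$ are just its coefficients.) What remains after removing the consistency step is a verification program, which is what the paper itself carries out.
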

We have checked this conjecture through various examples. 
\begin{eg}[$+1$-surgery on $\mathbf{4}_1$]
Naive application of the $+1$-surgery formula of \cite{GM} to $F_{\mathbf{4}_1}$ does not converge. So we need to use our regularization. From \eqref{eq:plus1surgeryformula}, we immediately get
\[
\hat{Z}(\Sigma(2,3,7)) = \hat{Z}(S^3_{+1}(\mathbf{4}_1)) = q^{\frac{1}{2}}\sum_{n\geq 0}\frac{(-1)^nq^{\frac{n(n+1)}{2}}}{(q^{n+1})_n},
\]
which agrees with the computation from the plumbing description of $\Sigma(2,3,7)$ in \cite{GM}. 
\end{eg}

\begin{eg}[$+1$-surgery on Whitehead link]
Consider the Whitehead link which is obtained by $-1$-surgery on a component of the Borromean rings (this is the same one considered in \cite{Park} and is mirror to Example \ref{eg:Whitehead}). If we apply the $+1$-surgery of \cite{GM}, then it stabilizes to $F_{\mathbf{4}_1}$ but does not converge to it. This is because even if we add more and more terms, there are always terms with higher and higher power of $q^{-1}$. 
This issue is solved by using the regularized $+1$-surgery formula; in this way, the result actually converges to $F_{\mathbf{4}_1}$. 
\end{eg}

\begin{eg}[$+1$-surgery on $\mathbf{L7a1}$]
As briefly mentioned in Example \ref{eg:L7a1}, the $+1$-surgery on the $x_2$-component of $\mathbf{L7a1}$ is the $\mathbf{6}_2$ knot. Indeed, applying the regularized $+1$-surgery formula, the result converges to $F_{\mathbf{6}_2}(x_1,q)$. 
\end{eg}

Now we study some other integer surgeries in a similar fashion. 

\begin{prop}
The $q$-series
\[
\frac{1}{\prod_{j=1}^{n}(x+x^{-1}-q^j-q^{-j})}\bigg\vert_{x^u\mapsto \delta_{b,u(\text{mod }p)}q^{\frac{u^2}{p}}}
\]
is a rational function function in $q$. Here, $f(x)\bigg\vert_{x^u\mapsto \delta_{b,u(\text{mod }p)}q^{\frac{u^2}{p}}}$ is a shorthand notation for ``first expand $f(x)$ into a power series in $x$ and then replace each $x^u$ by $q^{\frac{u^2}{p}}$ whenever $b = u\,(\text{mod }p)$ and by $0$ otherwise''.
\end{prop}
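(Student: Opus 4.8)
The plan is to expand the Weyl-symmetric rational function
\[
g_n(x) := \frac{1}{\prod_{j=1}^{n}(x+x^{-1}-q^j-q^{-j})} = \frac{x^n}{\prod_{j=1}^{n}(x-q^j)(x-q^{-j})}
\]
by partial fractions and then carry out the Gaussian summation $x^u\mapsto\delta_{b,\,u(\mathrm{mod}\,p)}\,q^{\frac{u^2}{p}}$ explicitly. Since $g_n$ is a proper rational function of $x$ with only the simple poles $x=q^{\pm j}$ for $1\le j\le n$, its power-series coefficients $c_u$ (defined by $g_n(x)=\sum_{u\ge 0}c_u x^u$, with $c_u=0$ for $u<n$) take the form
\[
c_u = \sum_{j=1}^{n}\bigl(\alpha_j q^{ju}+\beta_j q^{-ju}\bigr),
\]
where $\alpha_j,\beta_j\in\mathbb{Q}(q)$ come from the residues of $g_n$. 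Writing $u=b+pt$, the quantity in question is $S=\sum_{t\ge 0}c_{b+pt}\,q^{(b+pt)^2/p}$, a finite $\mathbb{Q}(q)$-combination of one-dimensional partial theta functions $\sum_{u\equiv b\,(p)}q^{\frac{u^2}{p}\pm ju}$.

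The first key step is to pin down the residues using the Weyl symmetry $g_n(1/x)=g_n(x)$. Comparing residues at $x=q^{-j}$ and $x=q^{j}$ gives $\operatorname{Res}_{q^{-j}}g_n=-q^{-2j}\operatorname{Res}_{q^{j}}g_n$, which translates, after reading off the Taylor coefficients of $\tfrac{1}{x-q^{\pm j}}$, into the clean antisymmetry
\[
\beta_j = -\alpha_j,\qquad \alpha_j = q^{-j}\operatorname{Res}_{x=q^j}g_n(x).
\]
This antisymmetry is the structural fact that makes the transform collapse. The second step is the telescoping: with $\beta_j=-\alpha_j$ one has $c_{b+pt}=\sum_j\alpha_j\bigl(q^{ju}-q^{-ju}\bigr)$ at $u=b+pt$, so $S=\sum_j\alpha_j\bigl(\sum_{u\equiv b}q^{E_+(u)}-\sum_{u\equiv b}q^{E_-(u)}\bigr)$ with $E_\pm(u):=\frac{u^2}{p}\pm ju$. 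The elementary identity $E_+(u)=E_-(u+jp)$ lets me reindex the first sum by $u\mapsto u+jp$ so that it matches the second except for the finitely many boundary terms with $b\le u<b+jp$. Hence the two partial thetas cancel and
\[
S = -\sum_{j=1}^{n}\alpha_j\sum_{s=0}^{j-1}q^{\frac{(b+ps)^2}{p}-j(b+ps)}.
\]
The right-hand side is a finite sum; since every exponent equals $\frac{b^2}{p}$ plus an integer and each $\alpha_j\in\mathbb{Q}(q)$, this exhibits $S$ as $q^{b^2/p}$ times a rational function of $q$, which proves the claim (up to the expected overall fractional monomial $q^{b^2/p}$, genuinely rational precisely when $p\mid b^2$). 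The case $p=1$, $b=0$ recovers the prototype \eqref{eq:minus1surgery}.

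The main obstacle—and the reason the statement is not obvious—is that each individual partial theta $\sum_{u\equiv b}q^{\frac{u^2}{p}\pm ju}$ is genuinely non-rational (a quantum-modular object), so rationality is invisible term by term; it materializes only through the exact pairwise cancellation of these partial thetas, which hinges entirely on the residue antisymmetry $\beta_j=-\alpha_j$ forced by Weyl symmetry. The points requiring care are the bookkeeping of summation ranges in the reindexing $u\mapsto u+jp$ (and the convergence, as a power series in $q$, that legitimizes it), and checking that the formula $c_u=\sum_j(\alpha_j q^{ju}+\beta_j q^{-ju})$ correctly reproduces the vanishing $c_u=0$ for $0\le u<n$, so that no spurious boundary contributions are introduced.
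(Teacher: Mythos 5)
Your proof is correct, but it takes a genuinely different route from the paper's. The paper never leaves the world of $q$-series: it multiplies the expression by $(q)_{2n}$, expands $\frac{1}{\prod_j(x+x^{-1}-q^j-q^{-j})}$ via the $q$-binomial series $\sum_{k}\qbin{2n+k}{2n}x^{n+k}$ (up to prefactors), splits the transform into the two sums over $k\equiv b-n$ and $k\equiv b-n-1 \pmod p$, and then expands each product $(1-q^{k+1})\cdots(1-q^{k+2n})$ into $2^{2n}$ monomials; a sign-flip/order-reversal involution pairs the monomials of one sum with those of the other so that each pair telescopes over $k$, leaving finitely many terms. You instead decompose $g_n(x)=x^n/\prod_j(x-q^j)(x-q^{-j})$ by partial fractions over $\mathbb{Q}(q)$, use the Weyl symmetry $g_n(x)=g_n(1/x)$ to force the residue antisymmetry $\beta_j=-\alpha_j$, and telescope each difference of partial thetas via $u\mapsto u+jp$. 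The underlying cancellation is the same quadratic-exponent identity (your $u^2/p+ju=(u+jp)^2/p-j(u+jp)$ is the paper's observation that $\frac{1}{p}(n+k)(n+k+pA)$ is shared between the two sums), but your bookkeeping is indexed by the $2n$ poles rather than by $2^{2n}$ sign sequences, which is leaner: it isolates the structural reason for rationality (each partial theta is non-rational, and rationality emerges only from the pairwise cancellation forced by Weyl symmetry) and yields an explicit closed form with $O(n^2)$ boundary terms weighted by residues. What the paper's route buys in exchange is that it stays entirely elementary (no partial fractions or residues) and its $(q)_{2n}$ normalization feeds directly into the refined claim stated right after the proposition, namely the form $\frac{q^{n^2}}{(q^{n+1})_n}q^{-b(p-b)/p}P_n^{p,b}$. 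Finally, your caveat about the overall monomial $q^{b^2/p}$ is not a defect relative to the paper: the paper's own proof also leaves finitely many terms with exponents in $b^2/p+\mathbb{Z}$, and its refined formula carries the prefactor $q^{-b(p-b)/p}=q^{b^2/p-b}$, so the statement's ``rational function in $q$'' should be read as rational up to this fractional monomial (equivalently, rational in $q^{1/p}$); you are, if anything, more careful on this point.
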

\begin{proof}
We start by putting it in a form that is easier to deal with. 
\begin{align*}
&(q)_{2n}\frac{1}{\prod_{j=1}^{n}(x+x^{-1}-q^j-q^{-j})}\bigg\vert_{x^u\mapsto \delta_{b,u(\text{mod }p)}q^{\frac{u^2}{p}}}\\
&= -(q)_{2n}(x^{\frac{1}{2}}-x^{-\frac{1}{2}})\sum_{k\geq 0}x^{n+k+\frac{1}{2}}\qbin{2n+k}{2n}\bigg\vert_{x^u\mapsto \delta_{b,u(\text{mod }p)}q^{\frac{u^2}{p}}}\\
&= \sum_{k\geqq 0}(x^{n+k}-x^{n+k+1})q^{-nk}(1-q^{k+1})\cdots(1-q^{k+2n})\bigg\vert_{x^u\mapsto \delta_{b,u(\text{mod }p)}q^{\frac{u^2}{p}}}\\
&= \sum_{\substack{k\geq 0 \\ k\equiv b-n\text{ mod }p}}q^{\frac{(n+k)^2}{p}}q^{-nk}(1-q^{k+1})\cdots(1-q^{k+2n})\\
&\quad - \sum_{\substack{k\geq 0 \\ k\equiv b-n-1\text{ mod }p}}q^{\frac{(n+k+1)^2}{p}}q^{-nk}(1-q^{k+1})\cdots(1-q^{k+2n})
\end{align*}
Expanding $(1-q^{k+1})\cdots(1-q^{k+2n})$ on both sides, there are $2^{2n}$ terms on each side of the summation. There is a natural pairing between the terms on the left side with the terms on the right side, given by the following rule. Any term on the left side can be expressed as a sequence of signs, where the $i$-th sign would represent whether we choose $1$ or $-q^{k+i}$ from $(1-q^{k+i})$. Then we flip all the signs and reverse the order. This new sequence will correspond to a term on the right summation. If we started with a term $q^{\frac{(n+k)^2}{p}+Ak+B}$ from the left summation, then the corresponding term on the right summation is $-q^{\frac{(n+k+1)^2}{p}-Ak+(B-(2n+1)A)}$. Notice that
\begin{align*}
q^{\frac{(n+k)^2}{p}+Ak+B} &= q^{\frac{1}{p}(n+k)(n+k+pA)+(B-An)},\\
-q^{\frac{(n+k+1)^2}{p}-Ak+(B-(2n+1)A)} &= -q^{\frac{1}{p}(n+k+1-pA)(n+k+1)+(B-An)}.
\end{align*}
This means that the summation of these pairs over $k$ telescopes, leaving only finitely many terms. Therefore the expression we started with is a Laurent polynomial in $q$. 
\end{proof}

This proposition itself is enough to study positive and negative integer surgeries, but we should remark that, based on experiments, it seems we can say more about the structure of the rational function; it takes the following form:
\begin{equation}
\frac{1}{\prod_{j=1}^{n}(x+x^{-1}-q^j-q^{-j})}\bigg\vert_{x^u\mapsto \delta_{b,u(\text{mod }p)}q^{\frac{u^2}{p}}} = \frac{q^{n^2}}{(q^{n+1})_n}q^{-\frac{b(p-b)}{p}}P_{n}^{p,b},
\end{equation}
where $P_{n}^{p,b}\in \mathbb{Z}[q^{-1}]$ is a polynomial with non-negative coefficients of degree at most $p\lfloor \frac{n^2}{4}\rfloor$ whose classical limit is $p^{n-1}$ for any $n\geq 1$. 
In Table \ref{tab:Ppbnexamples}, we list the first few polynomials $P_{n}^{p,b}$. 
\begin{table}[ht]
    \centering
    \begin{tabular}{c | c}
        $n$ &  \\
        \hline\hline
        $0$ & $P_0^{p,b}=\delta_{b,0}$\\
        \hline
        $1$ & $P_1^{p,b}=1$\\
        \hline
        $2$ & \begin{tabular}{@{}c@{}} $P_2^{1,0}=1$ \\ \hline $P_2^{2,0}=1+q^{-2}$ \\ $P_2^{2,1}=1+q^{-1}$ \\ \hline $P_2^{3,0} = 1+q^{-2}+q^{-3}$\\ $P_2^{3,1} = 1+q^{-1}+q^{-2}$ \end{tabular}\\
        \hline
        $3$ & \begin{tabular}{@{}c@{}} $P_3^{1,0}=1$ \\ \hline $P_3^{2,0}=1+q^{-2}+q^{-3}+q^{-4}$ \\ $P_3^{2,1}=1+q^{-1}+q^{-2}+q^{-4}$ \\ \hline $P_3^{3,0} = 1+q^{-2}+2q^{-3}+2q^{-4}+q^{-5}+2q^{-6}$\\ $P_3^{3,1} = 1+q^{-1}+2q^{-2}+q^{-3}+2q^{-4}+q^{-5}+q^{-6}$ \end{tabular}
    \end{tabular}
    \caption{Some examples of $P_n^{p,b}$}
    \label{tab:Ppbnexamples}
\end{table}

Replacing $q$ with $q^{-1}$, the right-hand side still makes sense as a power series in $q$. 
\begin{cor} 
We have the identity
\begin{equation}\label{eq:psurgery}
\frac{1}{\prod_{j=1}^{n}(x+x^{-1}-q^j-q^{-j})}\bigg\vert_{x^u\mapsto \delta_{b,u(\text{mod }p)}q^{-\frac{u^2}{p}}} = \frac{(-1)^n q^{\frac{n(n+1)}{2}}}{(q^{n+1})_n}q^{\frac{b(p-b)}{p}}P_{n}^{p,b}(q^{-1}).
\end{equation}
\end{cor}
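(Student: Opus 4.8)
The plan is to deduce this identity directly from the preceding Proposition by the substitution $q\mapsto q^{-1}$, exploiting the manifest $q\leftrightarrow q^{-1}$ symmetry of the left-hand side. The crucial observation is that the rational function $\frac{1}{\prod_{j=1}^{n}(x+x^{-1}-q^j-q^{-j})}$ is invariant under $q\mapsto q^{-1}$, since each factor $x+x^{-1}-q^j-q^{-j}$ is. Consequently, writing its (unique) power series expansion in $x$ as $\sum_{u}c_u(q)\,x^u$, every coefficient must satisfy $c_u(q)=c_u(q^{-1})$.

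First I would use this coefficient symmetry to relate the two substitutions. Writing $S^{\pm}(q)$ for the result of applying $x^u\mapsto \delta_{b,u(\mathrm{mod}\,p)}q^{\pm u^2/p}$, the symmetry gives
\[
S^{-}(q)=\sum_{u\equiv b}c_u(q)\,q^{-u^2/p}=\sum_{u\equiv b}c_u(q^{-1})\,(q^{-1})^{u^2/p}=S^{+}(q^{-1}),
\]
the middle equality being exactly $c_u(q)=c_u(q^{-1})$. This is an identity of formal series: $S^{+}(q)$ is a power series in $q$, and substituting $q\mapsto q^{-1}$ turns it into $S^{-}(q)$, a power series in $q^{-1}$ — precisely the reinterpretation alluded to in the remark preceding the statement.

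Next I would feed in the closed form from the Proposition, $S^{+}(q)=\frac{q^{n^2}}{(q^{n+1})_n}\,q^{-b(p-b)/p}\,P_n^{p,b}$, and substitute $q\mapsto q^{-1}$ throughout. The only nontrivial piece is the $q$-Pochhammer symbol, for which I would establish the reflection identity
\[
(q^{-(n+1)};q^{-1})_n=\prod_{k=0}^{n-1}\bigl(1-q^{-(n+1+k)}\bigr)=(-1)^n\,q^{-\frac{n(3n+1)}{2}}\,(q^{n+1})_n,
\]
which combines with $q^{-n^2}$ to yield $\frac{(q^{-1})^{n^2}}{(q^{-(n+1)};q^{-1})_n}=\frac{(-1)^n q^{n(n+1)/2}}{(q^{n+1})_n}$, using $-n^2+\tfrac{n(3n+1)}{2}=\tfrac{n(n+1)}{2}$. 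The prefactor transforms as $q^{-b(p-b)/p}\mapsto q^{b(p-b)/p}$, and $P_n^{p,b}\in\mathbb{Z}[q^{-1}]$ becomes $P_n^{p,b}(q^{-1})$ in the notation of the statement. Assembling these pieces reproduces exactly the right-hand side of \eqref{eq:psurgery}.

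The main obstacle is conceptual rather than computational: one must keep track of the fact that $S^{+}$ and $S^{-}$ live in different completions — $S^{+}$ is a genuine power series in $q$ while $S^{-}$ is a power series in $q^{-1}$ — so that the equality $S^{-}(q)=S^{+}(q^{-1})$ is to be read as re-expanding the common rational function in the two opposite directions. Once this is set up, the Pochhammer reflection and the exponent arithmetic are routine, and can be checked against the small cases of Table \ref{tab:Ppbnexamples} (for instance $n=1$, $p=1$, $b=0$, where both sides equal $\frac{-q}{1-q^2}$).
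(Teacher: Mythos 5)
Your proof is correct and is essentially the argument the paper intends but leaves implicit: just as for the $+1$-surgery identity, one substitutes $q\mapsto q^{-1}$ into the closed form, using the $q\leftrightarrow q^{-1}$ symmetry of the coefficients of the $x$-expansion together with the Pochhammer reflection $(q^{-(n+1)};q^{-1})_n=(-1)^n q^{-\frac{n(3n+1)}{2}}(q^{n+1};q)_n$. Your careful reading of the two sides as expansions of one rational function in the two opposite completions (series in $q$ versus series in $q^{-1}$) is exactly the interpretation the paper relies on when it says the right-hand side ``still makes sense as a power series in $q$.''
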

Just like we made use of \eqref{eq:plus1surgery} to conjecture a regularized version of the $+1$-surgery formula, we conjecture that we can use \eqref{eq:psurgery} to regularize $p$-surgery. 

\begin{conj}[Regularized $+p$-surgery formula]
When the $+p$-surgery formula of \cite{GM} converges, we need to use their formula. When it does not converge, we can use \eqref{eq:psurgery} to regularize it, as long as the regularization converges. 
\end{conj}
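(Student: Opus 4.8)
The plan is to prove the conjecture by reducing both the naive Gukov--Manolescu $+p$-surgery formula and its regularization to term-by-term operations on the inverted Habiro blocks, using the rational-function identity \eqref{eq:psurgery} as the bridge, and to recognize the whole construction as a pairing of $F_K$ against an (indefinite) theta kernel. First I would fix the precise form of the transform: writing $F_K(x,q)=x^{\frac12}\sum_{j\geq 0}f_j(K;q)x^j$, the $+p$-surgery formula of \cite{GM} is the Laplace-type transform $\mathcal{L}^{(p)}$ sending $x^u\mapsto \delta_{b,u(\mathrm{mod}\,p)}q^{-u^2/p}$, summed over residues $b$ with the appropriate Gauss-sum prefactors. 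Applying $\mathcal{L}^{(p)}$ directly to the $x$-expansion produces the \emph{naive} series $\sum_j f_j(K;q)\,q^{-(j+\frac12)^2/p}(\cdots)$, while the regularized series is obtained by first using Proposition \ref{prop:powerSeriesToInvertedHabiro} to write $f_j=\sum_{i=0}^{j}\qbin{j+i}{2i}a_{-i-1}(K)$, interchanging the two summations, and evaluating each inner block. The goal is to show that this prescription computes $\hat{Z}(S^3_{+p}(K))$ and is consistent with \cite{GM} on the overlap of the two regimes.

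For the regularized side, after interchanging summations $\mathcal{L}^{(p)}[F_K]$ becomes $\sum_{i\geq 0}a_{-i-1}(K)\,\mathcal{L}^{(p)}[\mathrm{block}_i]$, where $\mathrm{block}_i = 1/\prod_{j=1}^{i}(x+x^{-1}-q^j-q^{-j})$. By \eqref{eq:psurgery} each $\mathcal{L}^{(p)}[\mathrm{block}_i]$ equals, as a rational function of $q$, the quantity $\frac{(-1)^i q^{i(i+1)/2}}{(q^{i+1})_i}\,q^{b(p-b)/p}P_i^{p,b}(q^{-1})$, and the regularization is exactly the instruction to expand this rational function as a power series in $q$ rather than in $q^{-1}$. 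Using the degree bound $\deg_{q^{-1}}P_i^{p,b}\leq p\lfloor i^2/4\rfloor$ from the proposition preceding \eqref{eq:psurgery}, the lowest $q$-power of the $i$-th block is bounded below by $\tfrac{i(i+1)}{2}-p\lfloor i^2/4\rfloor+\tfrac{b(p-b)}{p}$; these tend to $+\infty$ for $p=1$ and can fail to for $p\geq 3$, which matches the hypothesis ``as long as the regularization converges'' in the statement. So under that hypothesis the regularized sum is a well-defined $q$-series, the candidate value of $\hat{Z}(S^3_{+p}(K))$.

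The analytic crux is the consistency claim: whenever the naive $\mathcal{L}^{(p)}$-series converges as a $q$-series it must equal the regularized one. The difficulty is that the two differ only by the \emph{order} of the $\sum_i$ (Habiro) and $\sum_j$ ($x$-degree) summations, yet neither inner sum converges $q$-adically in isolation --- the inner $\sum_j$ is a $q^{-1}$-series that equals its rational value only after the telescoping cancellation established for \eqref{eq:psurgery}. I would make this precise by tracking the explicit regularization factor, which for $p=1$ is visible in \eqref{eq:plus1surgeryregularization} as $1-\frac{\sum_{|k|\leq j}(-1)^{k}q^{k(3k+1)/2}}{(q)_\infty}$: since the truncated pentagonal sums converge to $(q)_\infty$ by the Euler/Jacobi identity, this factor is $q$-adically close to $1$ on the range of $j$ controlled by convergence of the naive side, so the correction it introduces is negligible. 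The main obstacle is upgrading this to a rigorous interchange for general $p$: one must show the $q^{-1}$-tails of the truncated block sums cancel uniformly against the Habiro coefficients $a_{-i-1}(K)$, whose growth is not controlled a priori, so the cleanest route is probably to realize $\mathcal{L}^{(p)}$ as a contour pairing $\oint F_K(x,q)\,\Theta_{p,b}(x,q)\,\tfrac{dx}{x}$ against an indefinite theta kernel and prove agreement by contour deformation, reading ``convergence of the naive formula'' as the vanishing of the error term.

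Finally, for the assertion that the regularized series is genuinely $\hat{Z}$ of the surgered manifold, a fully rigorous proof is out of reach at present, since a general definition of $\hat{Z}$ for the resulting (often indefinite) plumbed $3$-manifolds is not available. My plan is therefore to establish correctness by matching the regularized expression against the independent computations from plumbing descriptions and indefinite theta functions carried out in Section \ref{sec:indefiniteTheta}, and to take the unconditional agreement with \cite{GM} in the convergent regime (the consistency step above) as the rigorous core. The realistic target is thus a theorem proving consistency unconditionally, together with verification of correctness on the families treated in Section \ref{sec:indefiniteTheta}, leaving the general correctness statement contingent on a future general definition of $\hat{Z}$.
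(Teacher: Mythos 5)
First, note that the paper does not prove this statement: it is stated as a conjecture, supported only by the proven rational-function identity \eqref{eq:psurgery} (applied block-by-block to the inverted Habiro expansion, exactly as in your first two paragraphs) and by example verification against plumbing data and the indefinite-theta computations of Section \ref{sec:indefiniteTheta}. Your overall framework and your closing concession -- that general correctness is contingent on a future definition of $\hat{Z}$ for the surgered manifolds, so the realistic target is verification on examples -- match the paper's actual position. However, your designated ``rigorous core'' rests on a misreading of the conjecture. You propose to prove that whenever the naive \cite{GM} series converges it \emph{equals} the regularized one. The conjecture asserts no such equality; it is a case dichotomy, and the paper explicitly flags (in the $+1$ discussion preceding the statement) that \eqref{eq:plus1surgeryregularization} \emph{differs} from the formula of \cite{GM}, resolving the apparent contradiction precisely by instructing one to use the unregularized formula in the convergent regime. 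Indeed, for $p=1$ the two prescriptions differ by
\begin{equation*}
\sum_{j\geq 0}f_j(K;q)\,\bigl(q^{-j^2}-q^{-(j+1)^2}\bigr)\,\frac{\sum_{|k|>j}(-1)^{k}q^{\frac{k(3k+1)}{2}}}{(q)_\infty},
\end{equation*}
which, when the naive side converges, is a well-defined but generically nonzero $q$-series. Your heuristic that the regularization factor is ``$q$-adically close to $1$'' shows only that the $j$-th correction term has order growing roughly like $j^2/2$, i.e.\ that the correction \emph{converges}, not that it vanishes. An unconditional consistency theorem along your lines would therefore be an attempt to prove something the paper implicitly denies, and no contour deformation will rescue it -- though your instinct to realize the transform as a pairing against a theta kernel restricted to a cone is indeed how Section \ref{sec:indefiniteTheta} exploits the regularized formula.

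There is a second, more technical error in your convergence analysis. You bound the lowest $q$-power of the $i$-th regularized block by $\frac{i(i+1)}{2}-p\lfloor i^2/4\rfloor+\frac{b(p-b)}{p}$ and conclude that convergence can fail for $p\geq 3$ on these grounds alone. But $P_i^{p,b}$ lies in $\mathbb{Z}[q^{-1}]$ with non-negative coefficients and constant term, so $P_i^{p,b}(q^{-1})\in\mathbb{Z}[q]$ contributes only \emph{non-negative} powers of $q$; the degree bound $p\lfloor i^2/4\rfloor$ caps the top of the block, not its bottom. Hence every block in \eqref{eq:psurgery} has $q$-order at least $\frac{i(i+1)}{2}+\frac{b(p-b)}{p}\to+\infty$ for \emph{every} $p$, and the hypothesis ``as long as the regularization converges'' guards not against growth of $P_i^{p,b}$ but against growth of negative $q$-powers in the knot-dependent coefficients $a_{-i-1}(K)$ (which, as in $a_{-m}(\mathbf{3}_1^r)\sim q^{-m(m-3)/2}$ or for mirror twist knots, can be quadratically large or fail to be Laurent polynomials at all). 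Your proposal does acknowledge that the growth of $a_{-i-1}(K)$ is uncontrolled, but mis-locating the source of divergence would send the uniform-cancellation argument you sketch in the wrong direction.
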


\begin{eg}[$+1, +2, +3$-surgery on $\mathbf{4}_1$]
The $+1, +2, +3$-surgeries on $\mathbf{4}_1$ are all Seifert manifolds. They are nicely summarized in Table 9 of \cite{GM}. In that table, $\hat{Z}$ for those $3$-manifolds were computed from their plumbing descriptions. The surgery formula of \cite{GM} cannot be applied directly to compute them as $+1, +2, +3$-surgeries on $\mathbf{4}_1$, as it gives non-convergent results. Instead, we can use the regularized $+p$-surgery formula, and we find that the result agrees perfectly with the computation from plumbing descriptions. 
\end{eg}

\subsection{Quantum $C$-polynomial recursion and positive small surgeries}
So far, we have discussed how to use inverted Habiro series to study various integer surgeries, but we haven't discussed much how to compute the inverted Habiro series. Of course, once we know $F_K$, we can use Proposition \ref{prop:powerSeriesToInvertedHabiro} to compute inverted Habiro coefficients, but is there a way to compute the inverted Habiro coefficients just from the Habiro coefficients? 
Sometimes this is possible by solving the quantum (non-commutative) $C$-polynomial recursion, which is the topic of this subsection. 

It was first shown in \cite{GL} that the Habiro coefficients $\{a_m(K)\}_{m\geq 0}$ are $q$-holonomic, and the recurrence relation was further studied in \cite{GS}, and it was named a $C$-polynomial. A quantum $C$-polynomial $\hat{C}_K(\hat{E},\hat{Q},q)$ is written in terms of the $q$-commutative operators $\hat{Q}$ and $\hat{E}$. These operators act on the set of discrete functions by
\[
(\hat{Q}f)(m) = q^mf(m),\quad (\hat{E}f)(m) = f(m+1),
\]
and they satisfy the following $q$-commutation relation
\[
\hat{E}\hat{Q} = q\hat{Q}\hat{E}.
\]
Let $E$ be a variable where $\hat{Q}$ and $\hat{E}$ act by
\[
\hat{Q}E^{-m} = q^m E^{-m},\quad \hat{E}E^{-m} = E^{-(m-1)}.
\]
It is useful to combine the Habiro coefficients $\{a_m(K)\}_{m\geq 0}$ into a series
\[
\sum_{m \geq 0}a_m(K)E^{-m}.
\]
Since $\hat{C}_K(\hat{E},\hat{Q},q)$ defines a recurrence relation for $\{a_m(K)\}_{m\geq 0}$, when we apply $\hat{C}_K$ to the above series, all but finitely many terms will cancel out. However, in general it doesn't vanish completely, because the boundary terms survive. 
The non-vanishing of $\hat{C}_K(\hat{E},\hat{Q},q)\sum_{m \geq 0}a_m(K)E^{-m}$ is actually very useful for our purpose, because we can take it as the starting point of the recursion and use it to extend the sequence $\{a_m(K)\}_{m\geq 0}$ to negative $m$. More precisely, we want to extend it into a bilateral sequence $\{a_m(K)\}_{m\in \mathbb{Z}}$ in such a way that 
\begin{equation}
\hat{C}_K(\hat{E},\hat{Q},q)\sum_{m\in \mathbb{Z}}a_m(K)E^{-m} = 0.
\end{equation}
Since explicit expressions for the quantum $C$-polynomials for twist knots are given in \cite{GS}, we will use them and demonstrate how this strategy works. 

The simplest cases are the trefoil knot and the figure-eight knot. In those cases, we find that solving the quantum $C$-polynomial recursion, there is a unique way to extend the series $\sum_{m\geq 0}a_m(K)E^{-m}$ into a bilateral series $\sum_{m\in \mathbb{Z}}a_m(K)E^{-m}$, and the result agrees with what we found in Section \ref{subsec:invertedHabiroForSimpleKnots}. This is analogous to how in \cite{EGGKPS} we only needed the very first term to determine the full power series $F_K(x,q)$ for the trefoil and the figure-eight knot using quantum $A$-polynomial recursion. 

Other twist knots are more interesting. Let's focus on the two-twist knot $K_2 = \mathbf{5}_2$ for the moment. In this case, the series $\sum_{m\geq 0}a_m(\mathbf{5}_2)E^{-m}$ looks like
\[
1 + (-q^2-q^4)E^{-1} + (q^5+q^7+q^8+q^{11})E^{-2} +(-q^9-q^{11}-q^{12}-q^{13}-q^{15}-q^{16}-q^{17}-q^{21})E^{-3} + \cdots,
\]
and the quantum $C$-polynomial is given by
\[
\hat{C}_{\mathbf{5}_2}(\hat{E},\hat{Q},q) = \hat{E}^2 + (q^2+q^3)\hat{E}\hat{Q} + (q^6-q^3\hat{E})\hat{Q}^2 + (-q^7+q^4\hat{E})\hat{Q}^3.
\]
When we try to solve the recursion, we quickly realize that $a_{-1}(\mathbf{5}_2)$ cannot be determined by $\{a_m(\mathbf{5}_2)\}_{m\geq 0}$, but once we know $a_{-1}(\mathbf{5}_2)$ all the other $a_m(\mathbf{5}_2)$ with $m < -1$ are uniquely determined. This is analogous to how we need to know the first two terms to determine the full power series $F_{\mathbf{5}_2}(x,q)$ if we were to solve it using quantum $A$-polynomial recursion. 

So, how do we determine $a_{-1}(\mathbf{5}_2)$? It turns out this can be done by imposing a certain boundary condition on the bilateral sequence. More precisely, we take the ansatz where we express $a_{-1}(K)$ in terms of $\{a_m(K)\}_{m\leq -M}$ and take the limit where $M$ goes to $\infty$. 
Explicitly, we have the following sequence of relations coming from the quantum $C$-polynomial recursion:
\begin{align*}
a_{-1}(\mathbf{5}_2) &= \frac{1}{1+q}\qty(-q^{-1}+(1-q)a_{-2}(\mathbf{5}_2))\\
a_{-2}(\mathbf{5}_2) &= \frac{1}{1+q^2+q^3+q^4}\qty(q+(1+q-q^2-q^3)a_{-3}(\mathbf{5}_2))\\
a_{-3}(\mathbf{5}_2) &= \frac{1}{1+q^3+q^4+q^5+q^6+q^7+q^8+q^9}\qty(-q^6+(1+q^2+q^4-q^5-q^6-q^7)a_{-4}(\mathbf{5}_2)),
\end{align*}
and so on. Starting with the first equation, we can plug in the second equation to express $a_{-1}(\mathbf{5}_2)$ in terms of $a_{-3}(\mathbf{5}_2)$. Plugging in the third equation to that would give an expression of $a_{-1}(\mathbf{5}_2)$ in terms of $a_{-4}(\mathbf{5}_2)$, and we can proceed in the same way indefinitely. Taking the limit, we have the following ansatz for $a_{-1}(\mathbf{5}_2)$:
\begin{equation}\label{eq:aminus1of52}
a_{-1}(\mathbf{5}_2)=
\frac{-q^{-1}}{1+q}\qty(1+\frac{-(1-q)q^2}{1+q^2+q^3+q^4}\qty(1+\frac{-(1+q-q^2-q^3)q^5}{1+q^3+q^4+q^5+q^6+q^7+q^8+q^9}\qty(\cdots))).
\end{equation}
Expanding this into a power series in $q$, we find
\[
a_{-1}(\mathbf{5}_2) = -q^{-1}+1-q^2+q^5-q^9+q^{14}-q^{20}+q^{27}-O(q^{35}),
\]
which agrees perfectly with $F_{\mathbf{5}_2}(x,q)$ obtained in \cite{Park} (our $\mathbf{5}_2$ is the knot $m(\mathbf{5}_2)$ in \cite{Park}). 

The same type of ansatz seems to work for all twist knots. In general, for the $n$-twist knot $K_n$, $a_{l}(K_n)$ can be expressed in terms of $\{a_{l-k}(K_n)\}_{1\leq k\leq |n|-1}$. Using these relations, we can always express $a_{l}(K_n)$ in terms of $\{a_m(K_n)\}_{m\leq -M}$ for any $M$. Sending $M$ to $\infty$, we get the ansatz. 

All these ansatz can be obtained as a limit of a sequence of rational functions in $q$. As we emphasized in previous subsections, having a rational function is especially useful as it allows us to study the mirror knot and the orientation-reversed $3$-manifold. For instance, we can simply replace $q$ by $q^{-1}$ in \eqref{eq:aminus1of52} and still expand it as a power series in $q$. As a result, we get a candidate for $a_{-1}(m(\mathbf{5}_2))$:
\[
a_{-1}(m(\mathbf{5}_2)) = -q^2+q^4-q^5+2q^6-2q^7+q^8-3q^{10}+O(q^{11}).
\]
By construction, the asymptotic expansion of this series near each root of unity agrees with that of $a_{-1}(\mathbf{5}_2)\big\vert_{q\rightarrow q^{-1}}$. 
What we are doing here is morally very similar to what we did when we described the regularized versions of positive integer surgery formulas. We are regularizing $a_{-1}(\mathbf{5}_2)\big\vert_{q\rightarrow q^{-1}}$ into a power series in $q$ by using the fact that it can be naturally expressed as the limit of a sequence of rational functions. 

In a similar fashion, we get candidates of the inverted Habiro coefficients for all mirror twist knots. For instance, the mirror of 
\begin{align*}
a_{-1}(\mathbf{7}_2) &= -q^{-1}+1-q^4+q^7-q^{15}+q^{20}-q^{32}+q^{39}-O(q^{55}),\\
a_{-2}(\mathbf{7}_2) &= q^3-q^5-q^6+q^8+q^{13}+q^{14}-q^{16}-q^{17}-O(q^{18})
\end{align*}
are
\begin{align*}
a_{-1}(m(\mathbf{7}_2)) &= -q^3+q^5-q^7+2q^8-2q^{10}+2q^{12}+O(q^{13}),\\
a_{-2}(m(\mathbf{7}_2)) &= q^5-q^8-q^9-q^{11}+q^{13}+O(q^{14}).
\end{align*}

Since all these twist knots can be obtained from the Whitehead link by doing $\frac{1}{r}$-surgery for various $r$, we can use these results to reverse-engineer the regularized $\frac{1}{r}$-surgery formula. It turns out, the end result is very simple. 
\begin{conj}[Regularized $+\frac{1}{r}$-surgery formula]
When the $+\frac{1}{r}$-surgery formula of \cite{GM} converges, we need to use their formula. When it does not converge, we can regularize it in the following way, as long as this regularization converges:
\begin{equation}
\hat{Z}(S^3_{+1/r}(K)) = q^{\frac{r+r^{-1}}{4}}\sum_{j\geq 0}f_j(K;q)(q^{-r(j+\frac{1}{2}-\frac{1}{2r})^2}-q^{-r(j+\frac{1}{2}+\frac{1}{2r})^2})\qty(1 - \frac{\sum_{|k|\leq j}(-1)^kq^{\frac{k((2r+1)k+1)}{2}}}{f(-q^r,-q^{r+1})})
\end{equation}
where $F_K(x,q) = x^{\frac{1}{2}}\sum_{j\geq 0}f_j(K;q)x^j$, and 
\[
f(a,b) := \sum_{n\in \mathbb{Z}}a^{\frac{n(n+1)}{2}}b^{\frac{n(n-1)}{2}} = (-a;ab)_\infty (-b,ab)_\infty (ab;ab)_\infty
\]
denotes the Ramanujan theta function. 
\end{conj}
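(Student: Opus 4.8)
The plan is to mirror the derivations of the regularized $+1$- and $+p$-surgery formulas, treating the general $\frac{1}{r}$ case as the $q\mapsto q^{-1}$ reinterpretation of a convergent $-\frac{1}{r}$-surgery formula. First I would recall from \cite{GM} that $+\frac{1}{r}$-surgery is the Laplace/Gauss transform $x^u\mapsto q^{-ru^2}$ (with the balancing shifts and the overall prefactor $q^{(r+r^{-1})/4}$) applied to $(x^{\frac12}-x^{-\frac12})F_K$. Writing $F_K=x^{\frac12}\sum_{j\geq 0}f_j x^j$ and completing the square, one checks that the naive transform of the $j$-th term is $q^{(r+r^{-1})/4}f_j(q^{-r(j+\frac12-\frac{1}{2r})^2}-q^{-r(j+\frac12+\frac{1}{2r})^2})=f_j\,q^{-rj^2-rj}(q^{j+\frac12}-q^{-j-\frac12})$, which exhibits the stated Gaussian part directly. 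The obstruction to convergence is already visible: the factor $q^{-rj^2}$ drives the $q$-adic valuation to $-\infty$ as $j\to\infty$, so the sum over $j$ is at best a divergent power series in $q^{-1}$.

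Second, I would establish the building-block identity underlying the regularization by generalizing the telescoping/pairing argument used to prove that the $+p$-surgery building blocks are rational functions. Using the inverted Habiro expansion of $F_K$ together with \eqref{eq:powerSeriesToInvertedHabiro}, it suffices to evaluate the transform of $\frac{1}{\prod_{j=1}^{n}(x+x^{-1}-q^j-q^{-j})}$. For the convergent ($-\frac{1}{r}$) sign, the same argument should show this is a rational function of $q$; applying $q\mapsto q^{-1}$ then produces the $+\frac{1}{r}$ building block as a rational function that we reinterpret as a power series in $q$. Exactly as in the passage from \eqref{eq:minus1surgery} to \eqref{eq:plus1surgery}, this reinterpretation is the regularization, and the discrepancy between the divergent $q^{-1}$-series and the regularized $q$-series is a ratio of theta functions.

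Third, I would identify that theta function. A direct Ramanujan-theta computation gives $f(-q^r,-q^{r+1})=\sum_{n\in\mathbb{Z}}(-1)^n q^{rn^2+\frac{n(n-1)}{2}}=\sum_{n}(-1)^n q^{\frac{n((2r+1)n+1)}{2}}$ via the Jacobi triple product, specializing to $(q)_\infty$ when $r=1$ in agreement with \eqref{eq:plus1surgeryregularization}. The cutoff $|k|\leq j$ in the numerator arises because a building block whose length is tied to $j$ only ``sees'' the partial theta up to that index; rewriting the regularization factor as $\frac{\sum_{|k|>j}(-1)^k q^{\frac{k((2r+1)k+1)}{2}}}{f(-q^r,-q^{r+1})}$ makes the convergence mechanism manifest, since the tail begins at $q$-degree of order $\frac{2r+1}{2}(j+1)^2$, which dominates the $q^{-rj^2}$ coming from the Gaussian and thereby restores convergence. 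Assembling these pieces recovers the displayed formula, and the $r=1$ specialization must reproduce \eqref{eq:plus1surgeryregularization} as a consistency check.

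The main obstacle is justifying that this particular regularization computes $\hat{Z}(S^3_{1/r}(K))$ rather than merely producing \emph{some} convergent $q$-series. Concretely one must (a) prove the generalized building-block telescoping identity and pin down the exact theta denominator, where the half-integer shifts special to $\frac{1}{r}$-surgery require more care than the integer-$p$ case, and (b) show agreement with the $+\frac{1}{r}$-surgery formula of \cite{GM} whenever the latter converges, as well as with the plumbing computations of $\hat{Z}$ for Seifert-fibered examples such as $\frac{1}{r}$-surgeries on $\mathbf{4}_1$. Since a fully general definition of $\hat{Z}$ is not yet available, part (b) can at present only be checked against all accessible examples; this is precisely why the statement is posed as a conjecture, and a complete proof would likely require an independent surgery-theoretic construction of $\hat{Z}$.
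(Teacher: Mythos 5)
The statement you are addressing is posed in the paper as a \emph{conjecture}, and the paper offers no proof of it; its justification is empirical. Concretely, the paper computes inverted Habiro coefficients of twist knots by solving the quantum $C$-polynomial recursion with the rational-function ansatz (as in \eqref{eq:aminus1of52}), regularizes them under $q\mapsto q^{-1}$ by exploiting that they are limits of rational functions, and then — using the fact that all twist knots arise as $\frac{1}{r}$-surgeries on the Whitehead link — \emph{reverse-engineers} the displayed formula from this data, with the stated consistency check that the three descriptions $S^3_{1/2}(\mathbf{4}_1)=S^3_{-1}(m(K_2))=S^3_{+1}(K_{-2})$ produce the same $q$-series. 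Your route is genuinely different: a top-down generalization of the derivations of the regularized $+1$- and $+p$-surgery formulas. Much of it is sound. Your completion of the square giving $f_j\,q^{-rj^2-rj}(q^{j+\frac12}-q^{-j-\frac12})$, the identification $f(-q^r,-q^{r+1})=\sum_{k\in\mathbb{Z}}(-1)^kq^{\frac{k((2r+1)k+1)}{2}}$ via the triple product (reducing to $(q)_\infty$ at $r=1$, matching \eqref{eq:plus1surgeryregularization}), and the observation that the tail $\sum_{|k|>j}$ starts in degree $\approx\frac{2r+1}{2}(j+1)^2$ and so dominates $q^{-rj^2}$, are all correct; so is your closing assessment of why the statement can only be a conjecture absent a general construction of $\hat Z$.

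However, your second step has a concrete gap. The paper's telescoping proof of rationality for the $+p$ building blocks depends essentially on the quadratic form being $u^2/p$: a monomial $q^{\frac{(n+k)^2}{p}+Ak+B}$ is rewritten as $q^{\frac1p(n+k)(n+k+pA)+(B-An)}$ and cancels against its partner at index shifted by $pA-1$, which is an integer. For the $-\frac1r$ transform the quadratic form is $ru^2$; the analogous rewriting is $q^{r(n+k)(n+k+\frac{A}{r})+(B-An)}$, and the required index shift $\frac{A}{r}-1$ is an integer only when $r\mid A$. So ``the same argument'' does not show that the $-\frac1r$ building block is a rational function of $q$, and without that lemma the $q\mapsto q^{-1}$ regularization mechanism you invoke has nothing to act on. This is not the ``more care with half-integer shifts'' you flag; it is a structural failure of the pairing, and it is presumably why the paper abandons the building-block strategy for $\frac1r$-surgeries and reverse-engineers from twist-knot data instead. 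Until you supply a replacement identity (or adopt the paper's empirical route), your derivation does not reach the displayed formula even at the heuristic level at which the paper's $+1$ and $+p$ conjectures were obtained.
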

As a small consistency check, we find that the three surgery descriptions $S^3_{\frac{1}{2}}(\mathbf{4}_1) = S^3_{-1}(m(K_2)) = S^3_{+1}(K_{-2})$ of the same manifold give the same result for $\hat{Z}$, which is
\[q^{3/2}(1-2q^2+q^3-3q^4+4q^5-q^6+q^7+5q^8+O(q^9)).\]

\begin{rmk}
Using the positive surgery formulas we have discussed so far, we can generate a variety of ``false-mock'' pairs of $\hat{Z}$'s in the sense of \cite{CCFGH}. 
It would be interesting to study their modular properties in detail. 
\end{rmk}

\subsection{Inverted Habiro series in higher rank}
When we restrict our attention to symmetric representations, the higher rank analogue of the cyclotomic expansion is known. See \cite{IMMM,MMM} and references therein. The $q$-holonomicity of these cyclotomic coefficients follows from the $q$-holonomicity of the colored HOMFLY-PT polynomials. The corresponding higher rank analogue of the quantum $C$-polynomial was studied in \cite{MM}. Thanks to these known results, it is straightforward to extend our analysis in this section to the higher rank case, by solving the higher rank quantum $C$-polynomial recursions. 
For example, in case of the figure-eight knot, the reduced version of $F_{\mathbf{4}_1}^{SU(N),sym}(x,q)$ is given by
\[
F_{\mathbf{4}_1}^{SU(N),sym,red}(x,q) = -\sum_{n\geq 0}\frac{\frac{[-n][-n+1]\cdots[-n+N-3]}{[N-2]!}}{\prod_{j=0}^{n}(x^{\frac{1}{2}}q^{\frac{j}{2}}-x^{-\frac{1}{2}}q^{-\frac{j}{2}})(x^{\frac{1}{2}}q^{\frac{(N-2)-j}{2}}-x^{-\frac{1}{2}}q^{\frac{j-(N-2)}{2}})}
\]
Note that in this form the Weyl symmetry $F_K^{SU(N)}(x,q) = F_K^{SU(N)}(q^{2-N}x^{-1},q)$ of \cite{Park0} is manifest.

\section{Connection to indefinite theta functions}\label{sec:indefiniteTheta}
\subsection{$\hat{Z}$ and indefinite theta functions}
While the definition of $\hat{Z}$ for (weakly) negative definite plumbed $3$-manifolds is given in \cite{GPPV}, it has been a challenge to extend their definition to indefinite plumbings. This is mainly because the formula of \cite{GPPV}, when naively applied to a plumbed $3$-manifold that is not (weakly) negative definite, gives a non-convergent result. One approach to overcome this issue suggested by \cite{CFS} is to use indefinite theta functions with a regularization factor. In their paper, they gave one example, which is $-\Sigma(2,3,7)$. Starting from the formula of \cite{GPPV} naively applied to a plumbing description of $-\Sigma(2,3,7)$, they multiplied both the numerator and the denominator by $(q)_\infty$. The factor $(q)_\infty = \sum_{k\in \mathbb{Z}}(-1)^k q^{\frac{k(3k+1)}{2}}$ in the numerator is then interpreted as a part of an indefinite theta function. Then they inserted a regularization factor (denoted by $\rho(\mathbf{v})$ in their paper), and the resulting formula somewhat magically gives the correct $\hat{Z}(-\Sigma(2,3,7))$. 

In this section, we explore this connection to indefinite theta functions based on our observation from the previous section that the regularization factors appearing in the regularized positive surgery formulas look very much like the regularization factors we can use for indefinite theta functions. We will work out a number of explicit examples, hoping that they will eventually shed light on figuring out a formula for $\hat{Z}$ for general plumbed $3$-manifolds.

\begin{eg}[$-\Sigma(2,3,7)$]\label{eg:minusSigma237}
The $3$-manifold $-\Sigma(2,3,7)$ has various descriptions, for instance, 
\[
-\Sigma(2,3,7) = M(1;-\frac{1}{2},-\frac{1}{3},-\frac{1}{7}) = S^3_{+1}(\mathbf{3}_1^l) = S^3_{-1}(\mathbf{4}_1).
\]
For our purpose, the description as the $+1$-surgery on the left-handed trefoil knot is the most useful one. 
The naive plumbing formula \cite{GPPV} applied to $-\Sigma(2,3,7)$ gives the following expression
\begin{align*}
\hat{Z}(-\Sigma(2,3,7)) ``&\cong" \oint \frac{d x_1}{2\pi i x_1}\frac{d x_2}{2\pi i x_2}\frac{d x_3}{2\pi i x_3}\frac{d x_7}{2\pi i x_7} \frac{(x_2^{\frac{1}{2}}-x_2^{-\frac{1}{2}})(x_3^{\frac{1}{2}}-x_3^{-\frac{1}{2}})(x_7^{\frac{1}{2}}-x_7^{-\frac{1}{2}})}{x_1^{\frac{1}{2}}-x_1^{-\frac{1}{2}}}\\
&\quad\quad\times \sum_{\vec{\ell}\in M\mathbb{Z}^4 + \frac{1}{2}\vec{1}}q^{-(\vec{\ell}, M^{-1}\vec{\ell})}\vec{x}^{\vec{\ell}},
\end{align*}
where
\[
M = 
\begin{pmatrix}
1 & 1 & 1 & 1 \\
1 & 2 & 0 & 0 \\
1 & 0 & 3 & 0 \\
1 & 0 & 0 & 7
\end{pmatrix}
, \quad
M^{-1} = \begin{pmatrix}
42 & -21 & -14 & -6 \\
-21 & 11 & 7 & 3 \\
-14 & 7 & 5 & 2 \\
-6 & 3 & 2 & 1
\end{pmatrix}.
\]
The symbol $\cong$ denotes equality up to an overall sign and a power of $q$, and we wrote it in quote, because the right-hand side does not converge. 
Integrating out $x_2,x_3,x_7$, we get
\begin{equation}\label{eq:naiveMinusSigma237}
\hat{Z}(-\Sigma(2,3,7)) ``\cong"  \oint \frac{dx_1}{2\pi i x_1}\frac{1}{x_1^{\frac{1}{2}}-x_1^{-\frac{1}{2}}}\sum_{\epsilon_2,\epsilon_3,\epsilon_7=\pm 1}\sum_{\ell_1\in \mathbb{Z}+\frac{1}{2}}\epsilon_2\epsilon_3\epsilon_7\; q^{-42(\ell_1-\frac{1}{2}(\frac{1}{2}\epsilon_2+\frac{1}{3}\epsilon_3+\frac{1}{7}\epsilon_7))^2+\frac{1}{168}}x_1^{\ell_1}.
\end{equation}
Now recall that from the plumbing description of the left-handed trefoil, we have
\[
F_{\mathbf{3}_1^l}(x,q) \cong \oint \frac{d x_1}{2\pi i x_1}\frac{d x_2}{2\pi i x_2}\frac{d x_3}{2\pi i x_3}\frac{(x_2^{\frac{1}{2}}-x_2^{-\frac{1}{2}})(x_3^{\frac{1}{2}}-x_3^{-\frac{1}{2}})}{x_1^{\frac{1}{2}}-x_1^{-\frac{1}{2}}}\sum_{\vec{n}\in \mathbb{Z}^3\times\{0\}}q^{-(\vec{n}, M'\vec{n})-(\vec{n},\vec{1})}\vec{x}^{M'\vec{n}+\frac{1}{2}\vec{1}},
\]
where
\[
M' = 
\begin{pmatrix}
1 & 1 & 1 & 1 \\
1 & 2 & 0 & 0 \\
1 & 0 & 3 & 0 \\
1 & 0 & 0 & 6
\end{pmatrix}.
\]
So the power of $x=x_6$ is $n_1+\frac{1}{2}$. 
We have seen previously that the regularized $+1$ surgery formula can be written as
\[
x^{m+\frac{1}{2}} \mapsto (q^{-m^2}-q^{-(m+1)^2})\frac{\sum_{\substack{k\in \mathbb{Z}\\ |k|> |m+\frac{1}{2}|}}(-1)^kq^{\frac{k(3k-1)}{2}}}{(q)_\infty}.
\]
Therefore,
\begin{align*}
\hat{Z}(-\Sigma(2,3,7)) &\cong \oint \frac{d x_1}{2\pi i x_1}\frac{d x_2}{2\pi i x_2}\frac{d x_3}{2\pi i x_3}\frac{(x_2^{\frac{1}{2}}-x_2^{-\frac{1}{2}})(x_3^{\frac{1}{2}}-x_3^{-\frac{1}{2}})}{x_1^{\frac{1}{2}}-x_1^{-\frac{1}{2}}}\\
&\quad \times\sum_{\vec{n}\in \mathbb{Z}^3\times\{0\}}q^{-(\vec{n}, M'\vec{n})-(\vec{n},\vec{1})}x_1^{n_1+n_2+n_3+\frac{1}{2}}x_2^{n_1+2n_2+\frac{1}{2}}x_3^{n_1+3n_3+\frac{1}{2}}\\
&\quad\times (q^{-n_1^2}-q^{-(n_1+1)^2})\frac{\sum_{\substack{k\in \mathbb{Z}\\ |k|> |n_1+\frac{1}{2}|}}(-1)^kq^{\frac{k(3k-1)}{2}}}{(q)_\infty}\\
&\cong \oint \frac{d x_1}{2\pi i x_1}\frac{d x_2}{2\pi i x_2}\frac{d x_3}{2\pi i x_3}\frac{d x_7}{2\pi i x_7}\frac{(x_2^{\frac{1}{2}}-x_2^{-\frac{1}{2}})(x_3^{\frac{1}{2}}-x_3^{-\frac{1}{2}})(x_7^{\frac{1}{2}}-x_7^{-\frac{1}{2}})}{x_1^{\frac{1}{2}}-x_1^{-\frac{1}{2}}}\\
&\quad \times\sum_{\vec{n}\in \mathbb{Z}^3\times \mathbb{Z}}q^{-(\vec{n}, M'\vec{n})-n_7^2-(\vec{n},\vec{1})}x_1^{n_1+n_2+n_3+\frac{1}{2}}x_2^{n_1+2n_2+\frac{1}{2}}x_3^{n_1+3n_3+\frac{1}{2}}x_7^{n_1+n_7+\frac{1}{2}}\\
&\quad\times \frac{\sum_{\substack{k\in \mathbb{Z}\\ |k|> |n_1+\frac{1}{2}|}}(-1)^kq^{\frac{k(3k-1)}{2}}}{(q)_\infty}\\
&= \oint \frac{d x_1}{2\pi i x_1}\frac{d x_2}{2\pi i x_2}\frac{d x_3}{2\pi i x_3}\frac{d x_7}{2\pi i x_7}\frac{(x_2^{\frac{1}{2}}-x_2^{-\frac{1}{2}})(x_3^{\frac{1}{2}}-x_3^{-\frac{1}{2}})(x_7^{\frac{1}{2}}-x_7^{-\frac{1}{2}})}{x_1^{\frac{1}{2}}-x_1^{-\frac{1}{2}}}\\
&\quad \times\sum_{\vec{n}\in \mathbb{Z}^3\times \mathbb{Z}}q^{-((n_1+6n_7)^2+2(n_2-3n_7)^2+3(n_3-2n_7)^2+2(n_1+6n_7)(n_2-3n_7)+2(n_1+6n_7)(n_3-2n_7)+n_7^2)}\\
&\quad\times q^{-((n_1+6n_7)+(n_2-3n_7)+(n_3-2n_7))}\\
&\quad \times x_1^{(n_1+6n_7)+(n_2-3n_7)+(n_3-2n_7)+\frac{1}{2}}x_2^{(n_1+6n_7)+2(n_2-3n_7)+\frac{1}{2}}\\
&\quad\times x_3^{(n_1+6n_7)+3(n_3-2n_7)+\frac{1}{2}}x_7^{(n_1+6n_7)+n_7+\frac{1}{2}}\\
&\quad\times \frac{\sum_{\substack{k\in \mathbb{Z}\\ |k|> |n_1+6n_7+\frac{1}{2}|}}(-1)^kq^{\frac{k(3k-1)}{2}}}{(q)_\infty}\\
&= \oint \frac{d x_1}{2\pi i x_1}\frac{d x_2}{2\pi i x_2}\frac{d x_3}{2\pi i x_3}\frac{d x_7}{2\pi i x_7}\frac{(x_2^{\frac{1}{2}}-x_2^{-\frac{1}{2}})(x_3^{\frac{1}{2}}-x_3^{-\frac{1}{2}})(x_7^{\frac{1}{2}}-x_7^{-\frac{1}{2}})}{x_1^{\frac{1}{2}}-x_1^{-\frac{1}{2}}}\\
&\quad \times\sum_{\vec{n}\in \mathbb{Z}^3\times \mathbb{Z}}q^{-(\vec{n}, M\vec{n})-(\vec{n},\vec{1})}x^{M\vec{n}+\frac{1}{2}\vec{1}}
\frac{\sum_{\substack{k\in \mathbb{Z}\\ |k|> |n_1+6n_7+\frac{1}{2}|}}(-1)^kq^{\frac{k(3k-1)}{2}}}{(q)_\infty}.
\end{align*}
The last expression can be rewritten as
\begin{align*}
&\oint \frac{d x_1}{2\pi i x_1}\frac{d x_2}{2\pi i x_2}\frac{d x_3}{2\pi i x_3}\frac{d x_7}{2\pi i x_7}\frac{(x_2^{\frac{1}{2}}-x_2^{-\frac{1}{2}})(x_3^{\frac{1}{2}}-x_3^{-\frac{1}{2}})(x_7^{\frac{1}{2}}-x_7^{-\frac{1}{2}})}{x_1^{\frac{1}{2}}-x_1^{-\frac{1}{2}}}\\
&\quad \times \sum_{\vec{\ell}\in M\mathbb{Z}^4+\frac{1}{2}\vec{1}}q^{-(\vec{\ell},M^{-1}\vec{\ell})}\vec{x}^{\vec{\ell}}\frac{\sum_{\substack{k\in \mathbb{Z}\\ |k|>| ((6,-3,-2,0),\vec{\ell})|}}(-1)^kq^{\frac{k(3k-1)}{2}}}{(q)_\infty}\\
&\cong \oint \frac{dx_1}{2\pi i x_1}\frac{1}{x_1^{\frac{1}{2}}-x_1^{-\frac{1}{2}}}\sum_{\epsilon_2,\epsilon_3,\epsilon_7=\pm 1}\sum_{\ell_1\in \mathbb{Z}+\frac{1}{2}}\epsilon_2\epsilon_3\epsilon_7\; q^{-42\ell_1^2+(21\epsilon_2+14\epsilon_3+6\epsilon_7)\ell_1-\frac{1}{2}(7\epsilon_2\epsilon_3+3\epsilon_2\epsilon_7+2\epsilon_3\epsilon_7)}x_1^{\ell_1}\\
&\quad\times \frac{\sum_{\substack{k\in \mathbb{Z}\\ |k|> 6|\ell_1-\frac{1}{2}(\frac{1}{2}\epsilon_2 +\frac{1}{3}\epsilon_3)|}}(-1)^kq^{\frac{k(3k-1)}{2}}}{(q)_\infty}\\
&= \oint \frac{dx_1}{2\pi i x_1}\frac{1}{x_1^{\frac{1}{2}}-x_1^{-\frac{1}{2}}}\sum_{\epsilon_2,\epsilon_3,\epsilon_7=\pm 1}\sum_{\ell_1\in \mathbb{Z}+\frac{1}{2}}\epsilon_2\epsilon_3\epsilon_7\;\\ &\quad\times q^{-42(\ell_1-\frac{1}{2}(\frac{1}{2}\epsilon_2+\frac{1}{3}\epsilon_3+\frac{1}{7}\epsilon_7))^2-\frac{1}{2}(7\epsilon_2\epsilon_3+3\epsilon_2\epsilon_7+2\epsilon_3\epsilon_7)+\frac{21}{2}(\frac{1}{2}\epsilon_2+\frac{1}{3}\epsilon_3+\frac{1}{7}\epsilon_7))^2}x_1^{\ell_1}\\
&\quad\times \frac{\sum_{\substack{k\in \mathbb{Z}\\ |k|> 6|\ell_1-\frac{1}{2}(\frac{1}{2}\epsilon_2 +\frac{1}{3}\epsilon_3)|}}(-1)^kq^{\frac{3}{2}(k-\frac{1}{6})^2-\frac{1}{24}}}{(q)_\infty}\\
&\cong \oint \frac{dx_1}{2\pi i x_1}\frac{1}{x_1^{\frac{1}{2}}-x_1^{-\frac{1}{2}}}\sum_{\epsilon_2,\epsilon_3,\epsilon_7=\pm 1}\sum_{\ell_1\in \mathbb{Z}+\frac{1}{2}}\epsilon_2\epsilon_3\epsilon_7\; q^{-42(\ell_1-\frac{1}{2}(\frac{1}{2}\epsilon_2+\frac{1}{3}\epsilon_3+\frac{1}{7}\epsilon_7))^2+\frac{1}{168}}x_1^{\ell_1}\\
&\quad\times \frac{\sum_{\substack{k\in \mathbb{Z}\\ |k|> 6|\ell_1-\frac{1}{2}(\frac{1}{2}\epsilon_2 +\frac{1}{3}\epsilon_3+\frac{1}{7}\epsilon_7)|}}(-1)^kq^{\frac{3}{2}(k-\frac{1}{6})^2-\frac{1}{24}}}{(q)_\infty}\\
&= \oint \frac{dx_1}{2\pi i x_1}\frac{1}{x_1^{\frac{1}{2}}-x_1^{-\frac{1}{2}}}\sum_{\epsilon_2,\epsilon_3,\epsilon_7=\pm 1}\sum_{\ell_1\in \mathbb{Z}+\frac{1}{2}}\epsilon_2\epsilon_3\epsilon_7\; q^{-42(\ell_1-\frac{1}{2}(\frac{1}{2}\epsilon_2+\frac{1}{3}\epsilon_3+\frac{1}{7}\epsilon_7))^2+\frac{1}{168}}x_1^{\ell_1}\\
&\quad\times \frac{\sum_{\substack{k\in \mathbb{Z}\\ |k-\frac{1}{6}|> 6|\ell_1-\frac{1}{2}(\frac{1}{2}\epsilon_2 +\frac{1}{3}\epsilon_3+\frac{1}{7}\epsilon_7)|}}(-1)^kq^{\frac{3}{2}(k-\frac{1}{6})^2-\frac{1}{24}}}{(q)_\infty}.
\end{align*}
Comparing this last expression with \eqref{eq:naiveMinusSigma237}, we see that a regularization factor has been added to the naive non-convergent expression. What this regularization factor is doing is clear; it first multiplies $(q)_\infty$ to both the numerator and the denominator, and then restrict the range of summation from the rank 2 indefinite lattice $(\ell_1,k)\in (\mathbb{Z}+\frac{1}{2})\times \mathbb{Z}$ to a double cone $|k-\frac{1}{6}|> 6|\ell_1-\frac{1}{2}(\frac{1}{2}\epsilon_2 +\frac{1}{3}\epsilon_3+\frac{1}{7}\epsilon_7)|$ where the lattice is positive definite. 
\end{eg}
\begin{rmk}
While our expression of $\hat{Z}(-\Sigma(2,3,7))$ is very similar to that of \cite{CFS}, there is an important difference between the two. While we regularized the indefinite theta function by restricting it to a double cone, \cite{CFS} regularized it by restricting it to a one-sided cone $k-\frac{1}{6}>\frac{16}{3}|\ell_1-\frac{1}{2}(\frac{1}{2}\epsilon_2 +\frac{1}{3}\epsilon_3+\frac{1}{7}\epsilon_7)|$. The one-sided cone is slightly wider than the double cone, and they give the same answer. See Fig. \ref{fig:cones}.
\begin{figure}[ht]
    \centering
    \includegraphics[scale=0.7]{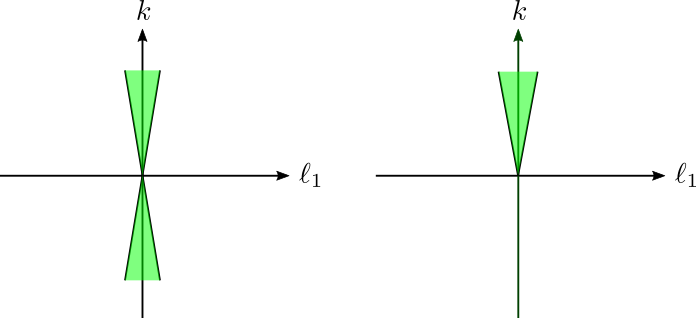}
    \caption{Double cone vs one-sided cone}
    \label{fig:cones}
\end{figure}
We believe it is more natural to use a double cone instead of a one-sided cone since it comes naturally from the regularized surgery formula. 
\end{rmk}

\begin{eg}[$-\Sigma(2,3,5)$]
The $3$-manifold $-\Sigma(2,3,5)$ has various descriptions, such as
\[
-\Sigma(2,3,5) = M(-1;\frac{1}{2},\frac{1}{3},\frac{1}{5}) = S^3_{+1}(\mathbf{3}_1^r).
\]
For us, the description as the $+1$-surgery on the right-handed trefoil is the most useful one. 
The naive plumbing formula gives
\begin{align*}
\hat{Z}(-\Sigma(2,3,5)) ``&\cong" \oint \frac{d x_1}{2\pi i x_1}\frac{d x_2}{2\pi i x_2}\frac{d x_3}{2\pi i x_3}\frac{d x_5}{2\pi i x_5} \frac{(x_2^{\frac{1}{2}}-x_2^{-\frac{1}{2}})(x_3^{\frac{1}{2}}-x_3^{-\frac{1}{2}})(x_5^{\frac{1}{2}}-x_5^{-\frac{1}{2}})}{x_1^{\frac{1}{2}}-x_1^{-\frac{1}{2}}}\\
&\quad\quad\times \sum_{\vec{\ell}\in M\mathbb{Z}^4 + \frac{1}{2}\vec{1}}q^{-(\vec{\ell}, M^{-1}\vec{\ell})}\vec{x}^{\vec{\ell}},
\end{align*}
where
\[
M = 
\begin{pmatrix}
-1 & 1 & 1 & 1 \\
1 & -2 & 0 & 0 \\
1 & 0 & -3 & 0 \\
1 & 0 & 0 & -5
\end{pmatrix}.
\]
Integrating out $x_2, x_3, x_5$, we get 
\begin{equation*}\label{eq:naiveMinusSigma235}
\hat{Z}(-\Sigma(2,3,5)) ``\cong"  \oint \frac{dx_1}{2\pi i x_1}\frac{1}{x_1^{\frac{1}{2}}-x_1^{-\frac{1}{2}}}\sum_{\epsilon_2,\epsilon_3,\epsilon_5=\pm 1}\sum_{\ell_1\in \mathbb{Z}+\frac{1}{2}}\epsilon_2\epsilon_3\epsilon_5\; q^{-30(\ell_1-\frac{1}{2}(\frac{1}{2}\epsilon_2+\frac{1}{3}\epsilon_3+\frac{1}{5}\epsilon_5))^2+\frac{1}{120}}x_1^{\ell_1}.
\end{equation*}
From the plumbing description of the right-handed trefoil knot, we have
\[
F_{\mathbf{3}_1^r}(x,q) \cong \oint \frac{d x_1}{2\pi i x_1}\frac{d x_2}{2\pi i x_2}\frac{d x_3}{2\pi i x_3}\frac{(x_2^{\frac{1}{2}}-x_2^{-\frac{1}{2}})(x_3^{\frac{1}{2}}-x_3^{-\frac{1}{2}})}{x_1^{\frac{1}{2}}-x_1^{-\frac{1}{2}}}\sum_{\vec{n}\in \mathbb{Z}^3\times\{0\}}q^{-(\vec{n}, M'\vec{n})-(\vec{n},\vec{1})}\vec{x}^{M'\vec{n}+\frac{1}{2}\vec{1}},
\]
where
\[
M' = 
\begin{pmatrix}
-1 & 1 & 1 & 1 \\
1 & -2 & 0 & 0 \\
1 & 0 & -3 & 0 \\
1 & 0 & 0 & -6
\end{pmatrix}.
\]
Using the regularized $+1$-surgery formula, we can deduce an expression for $\hat{Z}(-\Sigma(2,3,5))$ in terms of a indefinite theta function, just like the way we obtained such an expression for $\hat{Z}(-\Sigma(2,3,7))$. The result looks like
\begin{align*}
\hat{Z}(-\Sigma(2,3,5)) &\cong \oint \frac{dx_1}{2\pi i x_1}\frac{1}{x_1^{\frac{1}{2}}-x_1^{-\frac{1}{2}}}\sum_{\epsilon_2,\epsilon_3,\epsilon_5=\pm 1}\sum_{\ell_1\in \mathbb{Z}+\frac{1}{2}}\epsilon_2\epsilon_3\epsilon_5\; q^{-30(\ell_1-\frac{1}{2}(\frac{1}{2}\epsilon_2+\frac{1}{3}\epsilon_3+\frac{1}{5}\epsilon_5))^2+\frac{1}{120}}x_1^{\ell_1}\\
&\quad\times \frac{\sum_{\substack{k\in \mathbb{Z}\\ |k-\frac{1}{6}|> 6|\ell_1-\frac{1}{2}(\frac{1}{2}\epsilon_2 +\frac{1}{3}\epsilon_3+\frac{1}{5}\epsilon_5)|}}(-1)^kq^{\frac{3}{2}(k-\frac{1}{6})^2-\frac{1}{24}}}{(q)_\infty}.
\end{align*}
Again, we see that the indefinite theta function is regularized by restricting the range of summation to a double cone $|k-\frac{1}{6}|> 6|(\ell_1-\frac{1}{2}(\frac{1}{2}\epsilon_2 +\frac{1}{3}\epsilon_3+\frac{1}{5}\epsilon_5))|$. 
\end{eg}

\begin{eg}[$-\Sigma(2,3,13)$]
The $3$-manifold $-\Sigma(2,3,13)$ can be obtained as the $+\frac{1}{2}$-surgery on the left-handed trefoil knot. As a plumbed $3$-manifold, the plumbing graph has the linking matrix
\[
M = 
\begin{pmatrix}
1 & 1 & 1 & 1 & 0\\
1 & 2 & 0 & 0 & 0\\
1 & 0 & 3 & 0 & 0\\
1 & 0 & 0 & 6 & 1\\
0 & 0 & 0 & 1 & -2
\end{pmatrix}.
\]
Assuming that the regularized $+\frac{1}{2}$-surgery formula is valid, we can readily apply it to the expression of $F_{\mathbf{3}_1^l}$ coming from the plumbing description, and as a result we get the following
\begin{align*}
\hat{Z}(-\Sigma(2,3,13)) &\cong \oint\frac{dx_1}{2\pi i x_1} \frac{1}{x_1^{\frac{1}{2}}-x_1^{-\frac{1}{2}}}\sum_{\epsilon_2,\epsilon_3,\epsilon_{-2} = \pm 1}\sum_{\ell_1\in \mathbb{Z}+\frac{1}{2}}\epsilon_2\epsilon_3\epsilon_{-2}q^{-78(\ell_1 - \frac{1}{2}(\frac{1}{2}\epsilon_2+\frac{1}{3}\epsilon_3+\frac{1}{13}\epsilon_{-2}))^2-\frac{263}{312}}x_1^{\ell_1}\\
&\quad\times \frac{\sum_{\substack{k\in \mathbb{Z} \\ |k-\frac{1}{10}|>6|\ell_1 -\frac{1}{2}(\frac{1}{2}\epsilon_2+\frac{1}{3}\epsilon_3+\frac{1}{13}\epsilon_{-2})|}} (-1)^k q^{\frac{5}{2}(k-\frac{1}{10})^2-\frac{1}{40}}}{f(-q^2,-q^3)}\\
&= 1+q^3-q^4+q^5-q^7+2q^8-2q^9+2q^{10} +O(q^{11}).
\end{align*}
\end{eg}

\begin{eg}[$S^3_{-2}(\mathbf{4}_1)$]
The $-2$-surgery on the figure-eight knot is a Seifert manifold 
\[
S^3_{-2}(\mathbf{4}_1) = M(1;-\frac{1}{2},-\frac{1}{4},-\frac{1}{5}).
\]
As a plumbed $3$-manifold, the plumbing graph has the linking matrix
\[
M = 
\begin{pmatrix}
1 & 1 & 1 & 1\\
1 & 2 & 0 & 0\\
1 & 0 & 4 & 0\\
1 & 0 & 0 & 5
\end{pmatrix}.
\]
We have experimentally found the double cone, and the resulting expression for $\hat{Z}(S_{-2}^3(\mathbf{4}_1))$ is given by
\begin{align*}
\hat{Z}(S^3_{-2}(\mathbf{4}_1)) &\cong \oint \frac{dx_1}{2\pi i x_1}\frac{1}{x_1^{\frac{1}{2}}-x_1^{-\frac{1}{2}}}\sum_{\epsilon_2,\epsilon_4,\epsilon_5=\pm 1}\sum_{\ell_1\in \mathbb{Z}+\frac{1}{2}}\epsilon_2\epsilon_4\epsilon_5\; q^{-\frac{40}{2}(\ell_1-\frac{1}{2}(\frac{1}{2}\epsilon_2+\frac{1}{4}\epsilon_4+\frac{1}{5}\epsilon_5))^2+\frac{1}{80}}x_1^{\ell_1}\\
&\quad\times \frac{\sum_{\substack{k\in \mathbb{Z}\\ |k-\frac{1}{6}|> 4|\ell_1-\frac{1}{2}(\frac{1}{2}\epsilon_2 +\frac{1}{4}\epsilon_4+\frac{1}{5}\epsilon_5)|}}(-1)^kq^{\frac{3}{2}(k-\frac{1}{6})^2-\frac{1}{24}}}{(q)_\infty}.
\end{align*}
To separate the contribution of each $b$, we just need to add the characteristic function for the following condition: 
\[
\begin{pmatrix}\ell_1 \\ \epsilon_2 \\ \epsilon_4 \\ \epsilon_5\end{pmatrix} \in M \mathbb{Z}^4 + b.
\]
\end{eg}

\begin{eg}[$S^3_{-3}(\mathbf{4}_1)$]
The $-3$-surgery on the figure-eight knot is a Seifert manifold
\[
S^3_{-3}(\mathbf{4}_1) = M(1;-\frac{1}{3},-\frac{1}{3},-\frac{1}{4})
\]
As a plumbed $3$-manifold, the plumbing graph has the linking matrix
\[
M = 
\begin{pmatrix}
1 & 1 & 1 & 1\\
1 & 3 & 0 & 0\\
1 & 0 & 3 & 0\\
1 & 0 & 0 & 4
\end{pmatrix}.
\]
We have experimentally found the double cone, and the resulting expression for $\hat{Z}(S_{-3}^3(\mathbf{4}_1))$ is given by
\begin{align*}
\hat{Z}(S^3_{-3}(\mathbf{4}_1)) &\cong \oint \frac{dx_1}{2\pi i x_1}\frac{1}{x_1^{\frac{1}{2}}-x_1^{-\frac{1}{2}}}\sum_{\epsilon_{3a},\epsilon_{3b},\epsilon_4=\pm 1}\sum_{\ell_1\in \mathbb{Z}+\frac{1}{2}}\epsilon_{3a}\epsilon_{3b}\epsilon_4\; q^{-\frac{36}{3}(\ell_1-\frac{1}{2}(\frac{1}{3}\epsilon_{3a}+\frac{1}{3}\epsilon_{3b}+\frac{1}{4}\epsilon_4))^2+\frac{1}{48}}x_1^{\ell_1}\\
&\quad\times \frac{\sum_{\substack{k\in \mathbb{Z}\\ |k-\frac{1}{6}|> 3|\ell_1-\frac{1}{2}(\frac{1}{3}\epsilon_{3a} +\frac{1}{3}\epsilon_{3b}+\frac{1}{4}\epsilon_4)|}}(-1)^kq^{\frac{3}{2}(k-\frac{1}{6})^2-\frac{1}{24}}}{(q)_\infty}.
\end{align*}
To separate the contribution of each $b$, we just need to add the characteristic function for the following condition: 
\[
\begin{pmatrix}\ell_1 \\ \epsilon_{3a} \\ \epsilon_{3b} \\ \epsilon_4\end{pmatrix} \in M \mathbb{Z}^4 + b.
\]
\end{eg}

\begin{eg}[$-\Sigma(2,3,7)$ with another degree $3$ node]
As our last example, let's see what happens to the regularization factor if we create a new degree $3$ node to the plumbing graph of $-\Sigma(2,3,7)$ by applying Neumann moves twice. (As a reminder, Neumann moves are a set of moves on the plumbing graph that preserves the resulting 3-manifold. See Section 4 of \cite{GM} for a review.)
The linking matrix of the resulting plumbing graph is
\[
M = 
\begin{pmatrix}
1 & 1 & 1 & 1 & 0 & 0\\
1 & 2 & 0 & 0 & 0 & 0\\
1 & 0 & 3 & 0 & 0 & 0\\
1 & 0 & 0 & 5 & 1 & 1\\
0 & 0 & 0 & 1 & -1 & 0\\
0 & 0 & 0 & 1 & 0 & -1
\end{pmatrix}.
\]
As a result, after multiplying $(q)_\infty$ in both the numerator and the denominator, the indefinite theta function is of signature $+,-,-$. With a bit of experiment, we found an appropriate double cone, and the resulting expression is given below
\begin{align*}
\hat{Z}(-\Sigma(2,3,7)) &\cong \oint \frac{dx_1}{2\pi i x_1}\frac{dx_5}{2\pi i x_5}\frac{1}{x_1^{\frac{1}{2}}-x_1^{-\frac{1}{2}}}\frac{1}{x_5^{\frac{1}{2}}-x_5^{-\frac{1}{2}}}\sum_{\epsilon_2,\epsilon_3,\epsilon_{-1a},\epsilon_{-1b} =\pm 1}\sum_{\ell_1, \ell_5\in \mathbb{Z}+\frac{1}{2}}\epsilon_2\epsilon_3\epsilon_{-1a}\epsilon_{-1b}\\
&\quad\times q^{(\ell_1 - \frac{1}{2}(\frac{1}{2}\epsilon_2+\frac{1}{3}\epsilon_3), \ell_5 -\frac{1}{2}(-\epsilon_{-1a}-\epsilon_{-1b}))\cdot \begin{psmallmatrix}-42 & 6 \\ 6 & -1\end{psmallmatrix} \cdot (\ell_1 - \frac{1}{2}(\frac{1}{2}\epsilon_2+\frac{1}{3}\epsilon_3), \ell_5 -\frac{1}{2}(-\epsilon_{-1a}-\epsilon_{-1b}))^t +\frac{1}{24}}\\
&\quad\times x_1^{\ell_1}x_5^{\ell_5}\frac{\sum_{\substack{k\in \mathbb{Z} \\ |k-\frac{1}{6}| > 6|\ell_1-\frac{1}{2}(\frac{1}{2}\epsilon_2+\frac{1}{3}\epsilon_3)| \\ |k-\frac{1}{6}| > |\ell_5 -\frac{1}{2}(-\epsilon_{-1a}-\epsilon_{-1b})|}}(-1)^k q^{\frac{3}{2}(k-\frac{1}{6})-\frac{1}{24}}}{(q)_\infty}.
\end{align*}
That is, the double cone is determined by the two inequalities $|k-\frac{1}{6}| > 6|\ell_1-\frac{1}{2}(\frac{1}{2}\epsilon_2+\frac{1}{3}\epsilon_3)|$ and $|k-\frac{1}{6}| > |\ell_5 -\frac{1}{2}(-\epsilon_{-1a}-\epsilon_{-1b})|$. 
Compare this with Example \ref{eg:minusSigma237}. It is a very interesting problem to study how the double cone behaves under Neumann moves in general. 
\end{eg}

\section{Future directions}
We conclude this paper by collecting some interesting questions for further study. 
\begin{itemize}
    \item Is there a way to derive the appearance of both highest weight and lowest weight Verma modules from complex Chern-Simons theory?
    \item What is the most general class of links for which the inverted state sum method works? Does it contain all fibered knots?
    \item What's the $\infty$-surgery formula for $\hat{Z}$? (Question \ref{qn:infinitySurgery})
    \item Is there a way to obtain $F_{m(\mathbf{5}_2)}(x,q)$ directly from a state sum model? More generally, is there a natural way to regularize the $q$-series in the context of state sum?
    \item Can we invert the cyclotomic expansions for $\mathfrak{gl}_N$ knot invariants studied by \cite{BG}?
    \item When written as an inverted Habiro series, $F_K$ naturally has poles at $x=q^n$. What's the meaning of their residues?
    \item Given a general plumbing graph, what's the regularization factor gives rise to $\hat{Z}$? In particular, how does the double cone behave under Neumann moves?
\end{itemize}

\appendix
\section{All fibered knots up to 10 crossings}\label{sec:fiberedKnotsInversionData}
According to KnotInfo \cite{KnotInfo}, there are 117 fibered knots up to 10 crossings. Using their minimum braid representatives in Knot Atlas \cite{KnotAtlas}, we see that 74 of them are homogeneous braids. Since they are covered by Theorem \ref{thm:mainTheorem}, we can focus on the other 43 of them which are non-homogeneous braids. It turns out, all of them can be covered by Theorem \ref{thm:beyondHomogeneousBraids}. We summarize the inversion data (assignment of $+$'s and $-$'s to each segment so that the state sum is absolutely convergent) in Table \ref{tab:fiberedKnotsInversionData}. 
\begin{table}[ht]
    \centering
    \begin{tabular}{|c|c|c|}
        \hline
        Knot & Braid & Inversion data \\
        \hline\hline
        $\mathbf{8}_{20}$ & $\sigma_1^{-1}\sigma_2^{-3}\sigma_1^{-1}\sigma_2^3$ & $\X\NE\E\NW\X\W\W\W$\\
        $\mathbf{8}_{21}$ & $\sigma_1^{-2}\sigma_2^2\sigma_1^{-1}\sigma_2^{-3}$ & $\X\X\W\W\X\NE\E\NW$\\
        $\mathbf{9}_{42}$ & $\sigma_1\sigma_2^{-1}\sigma_1\sigma_3^{-2}\sigma_2^{-1}\sigma_3^{3}$ & $\n\E\n\NE\NW\E\W\W\W$\\
        $\mathbf{9}_{44}$ & $\sigma_1\sigma_2^{-1}\sigma_1\sigma_3^2\sigma_2^{-1}\sigma_3^{-3}$ & $\n\E\n\W\W\E\NE\E\NW$\\
        $\mathbf{9}_{45}$ & $\sigma_1^{-1}\sigma_2\sigma_1^{-1}\sigma_3^{-1}\sigma_2^{-1}\sigma_3\sigma_2^{-1}\sigma_3^{-2}$ & $\X\W\X\NW\X\W\X\NE\E$\\
        $\mathbf{9}_{48}$ & $\sigma_1^{-1}\sigma_2\sigma_3^{-1}\sigma_2\sigma_1^{-1}\sigma_3\sigma_2\sigma_3^{-1}\sigma_2\sigma_3^2$ & $\X\W\n\W\X\n\W\n\W\n\n$\\
        $\mathbf{10}_{60}$ & $\sigma_1^{-1}\sigma_2\sigma_1^{-1}\sigma_2^2\sigma_3^{-1}\sigma_2\sigma_3^{-1}\sigma_2^{-1}\sigma_4^{-1}\sigma_3\sigma_4^{-1}$ & $\X\W\X\W\W\E\W\NW\X\E\NE\X$\\
        $\mathbf{10}_{69}$ & $\sigma_1\sigma_2^{-1}\sigma_3\sigma_2^{-1}\sigma_4\sigma_1\sigma_3\sigma_2^{-1}\sigma_4^{-1}\sigma_3\sigma_4^2$ & $\n\E\W\E\n\n\W\E\n\W\n\n$\\
        $\mathbf{10}_{73}$ & $\sigma_1^{-1}\sigma_2\sigma_1^{-1}\sigma_2\sigma_3^{-1}\sigma_2\sigma_4^{-1}\sigma_3^{-1}\sigma_4\sigma_3^{-1}\sigma_4^{-2}$ & $\X\W\X\W\E\W\X\E\X\E\X\X$\\
        $\mathbf{10}_{75}$ & $\sigma_1\sigma_2^{-1}\sigma_1\sigma_2^{-1}\sigma_3\sigma_2^{-2}\sigma_4\sigma_3^{-1}\sigma_2\sigma_4\sigma_3$ & $\n\E\n\E\W\E\E\n\NE\n\W\NW$\\
        $\mathbf{10}_{78}$ & $\sigma_1^{-2}\sigma_2\sigma_1^{-1}\sigma_3^{-1}\sigma_2\sigma_4^{-1}\sigma_3^{-1}\sigma_4\sigma_3^{-1}\sigma_4^{-2}$ & $\X\X\W\X\E\W\X\E\X\E\X\X$\\
        $\mathbf{10}_{81}$ & $\sigma_1^2\sigma_2^{-1}\sigma_1\sigma_3\sigma_2^2\sigma_4^{-1}\sigma_3^{-3}\sigma_4^{-1}$ & $\n\n\E\n\NE\n\n\X\E\E\NW\E$\\
        $\mathbf{10}_{89}$ & $\sigma_1^{-1}\sigma_2^{-1}\sigma_3\sigma_2^{-1}\sigma_4^{-1}\sigma_1^{-1}\sigma_3^{-1}\sigma_2\sigma_3\sigma_4^{-1}\sigma_3\sigma_4^{-1}$ & $\X\X\W\X\E\X\NE\W\NW\E\W\E$\\
        $\mathbf{10}_{96}$ & $\sigma_1\sigma_2^{-1}\sigma_3\sigma_2^{-1}\sigma_4\sigma_1\sigma_2^{-1}\sigma_3\sigma_2^{-1}\sigma_4\sigma_3\sigma_4^{-1}$ & $\n\E\W\E\n\n\E\W\E\n\W\n$\\
        $\mathbf{10}_{105}$ & $\sigma_1^2\sigma_2^{-1}\sigma_1\sigma_3\sigma_2^2\sigma_4^{-1}\sigma_3^{-1}\sigma_2\sigma_3^{-1}\sigma_4^{-1}$ & $\n\n\E\n\NE\n\n\X\E\n\NW\E$\\
        $\mathbf{10}_{107}$ & $\sigma_1^{-2}\sigma_2\sigma_1^{-1}\sigma_3\sigma_2^2\sigma_4^{-1}\sigma_3\sigma_2^{-1}\sigma_3\sigma_4^{-1}$ & $\X\X\W\X\n\W\NE\E\W\NW\n\E$\\
        $\mathbf{10}_{110}$ & $\sigma_1^{-1}\sigma_2\sigma_1^{-1}\sigma_3^{-1}\sigma_2^{-3}\sigma_4\sigma_3\sigma_2^{-1}\sigma_3\sigma_4$ & $\X\W\X\NW\X\X\X\n\W\X\NE\W$\\
        $\mathbf{10}_{115}$ & $\sigma_1\sigma_2^{-1}\sigma_1\sigma_3\sigma_2^2\sigma_4^{-1}\sigma_3^{-1}\sigma_2\sigma_3^{-2}\sigma_4^{-1}$ & $\n\E\n\NE\n\n\X\E\n\E\NW\E$\\
        $\mathbf{10}_{125}$ & $\sigma_1^{-1}\sigma_2^{-3}\sigma_1^{-1}\sigma_2^5$ & $\X\NE\E\NW\X\W\W\W\W\W$\\
        $\mathbf{10}_{126}$ & $\sigma_1^{-1}\sigma_2^3\sigma_1^{-1}\sigma_2^{-5}$ & $\X\W\W\W\X\NE\E\E\E\NW$\\
        $\mathbf{10}_{127}$ & $\sigma_1^{-2}\sigma_2^2\sigma_1^{-1}\sigma_2^{-5}$ & $\X\X\W\W\X\NE\E\E\E\NW$\\
        $\mathbf{10}_{132}$ & $\sigma_1^{-1}\sigma_2^{-1}\sigma_3^{-1}\sigma_2\sigma_1^3\sigma_3^{-2}\sigma_2^{-1}\sigma_1^{-1}$ & $\X\NE\NE\n\W\W\W\E\NW\NW\X$\\
        $\mathbf{10}_{133}$ & $\sigma_1^{-2}\sigma_2\sigma_1^{-1}\sigma_2^{-1}\sigma_3^2\sigma_2^{-1}\sigma_3^{-3}$ & $\X\X\X\X\X\W\W\X\NE\E\NW$\\
        $\mathbf{10}_{136}$ & $\sigma_1\sigma_2^{-1}\sigma_1\sigma_3^2\sigma_2^{-1}\sigma_3^{-1}\sigma_4\sigma_3^{-1}\sigma_4$ & $\n\E\n\W\W\E\NE\W\NW\n$\\
        $\mathbf{10}_{137}$ & $\sigma_1\sigma_2^{-1}\sigma_1\sigma_3^{-2}\sigma_2^{-1}\sigma_3\sigma_4^{-1}\sigma_3\sigma_4^{-1}$ & $\n\E\n\NE\NW\E\W\E\W\E$\\
        $\mathbf{10}_{140}$ & $\sigma_1^{-1}\sigma_2^{-1}\sigma_3^{-1}\sigma_2\sigma_1^3\sigma_3^{-1}\sigma_2^{-1}\sigma_1^{-2}$ & $\X\NE\NE\n\W\W\W\NW\NW\X\X$\\
        $\mathbf{10}_{141}$ & $\sigma_1^{-2}\sigma_2^{-3}\sigma_1^{-1}\sigma_2^4$ & $\X\X\NE\E\NW\X\W\W\W\W$\\
        $\mathbf{10}_{143}$ & $\sigma_1^{-2}\sigma_2^3\sigma_1^{-1}\sigma_2^{-4}$ & $\X\X\W\W\W\X\NE\E\E\NW$\\
        $\mathbf{10}_{145}$ & $\sigma_1^{-1}\sigma_2^{-1}\sigma_3\sigma_2^{-1}\sigma_1^{-1}\sigma_3^{-1}\sigma_2^{-1}\sigma_3\sigma_2^{-1}\sigma_3^{-2}$ & $\X\X\X\X\X\X\X\X\X\X\X$\\
        $\mathbf{10}_{148}$ & $\sigma_1^{-1}\sigma_2\sigma_1^{-1}\sigma_2^2\sigma_1^{-1}\sigma_2^{-4}$ & $\X\W\X\W\W\X\NE\E\E\NW$\\
        $\mathbf{10}_{149}$ & $\sigma_1^{-2}\sigma_2\sigma_1^{-1}\sigma_2\sigma_1^{-1}\sigma_2^{-4}$ & $\X\X\W\X\W\X\NE\E\E\NW$\\
        $\mathbf{10}_{150}$ & $\sigma_2\sigma_1\sigma_3^{-1}\sigma_2^{-1}\sigma_1\sigma_3^2\sigma_2^{-1}\sigma_3^3$ & $\n\n\NW\E\n\W\W\E\W\W\NE$\\
        $\mathbf{10}_{151}$ & $\sigma_2^{-1}\sigma_1\sigma_3\sigma_2^{-1}\sigma_1\sigma_3^{-2}\sigma_2\sigma_3^3$ & $\E\n\W\E\n\NE\E\n\NW\W\W$\\
        $\mathbf{10}_{153}$ & $\sigma_1^{-3}\sigma_2^{-1}\sigma_1^{-2}\sigma_3\sigma_2^3\sigma_3$ & $\X\X\X\X\X\X\NE\W\W\W\NW$\\
        $\mathbf{10}_{154}$ & $\sigma_1\sigma_2^3\sigma_1\sigma_3\sigma_2\sigma_3^{-1}\sigma_2\sigma_3^2$ & $\n\n\n\n\n\n\n\n\n\n\n$\\
        $\mathbf{10}_{155}$ & $\sigma_1\sigma_2^{-2}\sigma_1\sigma_2^{-2}\sigma_1\sigma_2^3$ & $\n\E\E\n\E\E\n\NW\W\NE$\\
        $\mathbf{10}_{156}$ & $\sigma_1^{-1}\sigma_2^{-1}\sigma_3\sigma_2^{-1}\sigma_1^{-1}\sigma_3^2\sigma_2\sigma_3^{-3}$ & $\X\X\W\X\X\W\NE\W\E\E\NW$\\
        $\mathbf{10}_{157}$ & $\sigma_1^2\sigma_2^{-1}\sigma_1\sigma_2^{-1}\sigma_1^2\sigma_2^3$ & $\n\n\E\n\E\n\n\NW\W\NE$\\
        $\mathbf{10}_{158}$ & $\sigma_1\sigma_2\sigma_3^{-1}\sigma_2\sigma_1\sigma_3^2\sigma_2^{-1}\sigma_3^{-3}$ & $\n\n\E\n\n\NW\W\E\NE\E\E$\\
        $\mathbf{10}_{159}$ & $\sigma_1^{-2}\sigma_2^2\sigma_1^{-1}\sigma_2\sigma_1^{-1}\sigma_2^{-3}$ & $\X\X\W\W\X\W\X\NE\E\NW$\\
        $\mathbf{10}_{160}$ & $\sigma_1^{-1}\sigma_2\sigma_3^{-1}\sigma_2\sigma_1^{-1}\sigma_3^2\sigma_2\sigma_3^3$ & $\X\W\n\W\X\n\n\W\n\n\n$\\
        $\mathbf{10}_{161}$ & $\sigma_1^{-2}\sigma_2^{-2}\sigma_1^{-1}\sigma_2\sigma_1^{-1}\sigma_2^{-3}$ & $\X\X\X\X\X\X\X\X\X\X$\\
        $\mathbf{10}_{163}$ & $\sigma_1\sigma_2^2\sigma_3^{-1}\sigma_2\sigma_1\sigma_3^{-2}\sigma_2^{-1}\sigma_3^2$ & $\n\n\n\E\n\n\E\NW\E\W\NE$\\
        \hline
    \end{tabular}
    \caption{Fibered knots up to 10 crossings that are possibly not homogeneous braid knots}
    \label{tab:fiberedKnotsInversionData}
\end{table}
In the table, each inversion datum shows how each elementary braid in the braid word looks like. The green segments represent the ones labelled with $-$ signs and the black segments represent the ones labelled with $+$ signs. 
For example, the inversion data for $\mathbf{8}_{20}$ can be translated into the diagram in Fig. \ref{fig:8_20}. 
\begin{figure}[ht]
    \centering
    \includegraphics[scale=0.5]{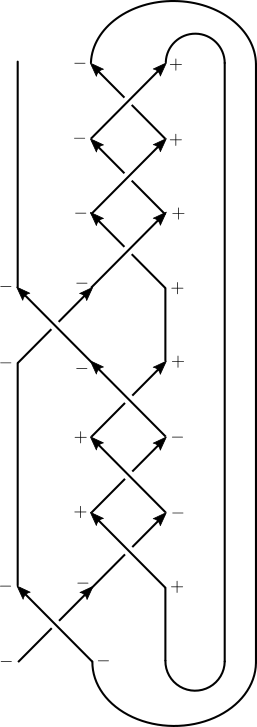}
    \caption{The $\mathbf{8}_{20}$ knot}
    \label{fig:8_20}
\end{figure}
As explained in Theorem \ref{thm:beyondHomogeneousBraids}, the overall sign is given by $(-1)^s$, where $s$ is the number of closed cycles of the set of $-$-signed segments as a simple multi-cycle. Whenever the $-$-signed strands cross each other, one should basically ignore the crossing, as the multi-cycle follows the same strand on such crossings. In case of the diagram in Fig. \ref{fig:8_20}, the $-$-signed simple multi-cycle consists of one open path and one closed cycle, so $(-1)^s = -1$ in this case.

\bibliography{ref}

\newcommand{\etalchar}[1]{$^{#1}$}
\begin{thebibliography}{IMMM12}

\bibitem[BG21]{BG}
Anna Beliakova and Eugene Gorsky.
\newblock {Cyclotomic expansions for $\mathfrak{gl}_N$ knot invariants via
  interpolation Macdonald polynomials}.
\newblock 2021.
\newblock arXiv:2101.08243.

\bibitem[BNG96]{BarNatanGaroufalidis}
Dror Bar-Natan and Stavros Garoufalidis.
\newblock On the {M}elvin-{M}orton-{R}ozansky conjecture.
\newblock {\em Invent. Math.}, 125(1):103--133, 1996.

\bibitem[CCF{\etalchar{+}}19]{CCFGH}
Miranda~C.N. Cheng, Sungbong Chun, Francesca Ferrari, Sergei Gukov, and
  Sarah~M. Harrison.
\newblock 3d modularity.
\newblock {\em J. High Energy Phys.}, (10):010, 93, 2019.
\newblock arXiv:1809.10148.

\bibitem[CFS20]{CFS}
Miranda C.~N. Cheng, Francesca Ferrari, and Gabriele Sgroi.
\newblock Three-manifold quantum invariants and mock theta functions.
\newblock {\em Philos. Trans. Roy. Soc. A}, 378(2163):20180439, 15, 2020.
\newblock arXiv:1912.07997.

\bibitem[DE20]{DE}
Luís Diogo and Tobias Ekholm.
\newblock {Augmentations, annuli, and Alexander polynomials}.
\newblock 2020.
\newblock arXiv:2005.09733.

\bibitem[EGG{\etalchar{+}}20]{EGGKPS}
Tobias Ekholm, Angus Gruen, Sergei Gukov, Piotr Kucharski, Sunghyuk Park, and
  Piotr Sułkowski.
\newblock {$\widehat{Z}$ at large $N$: from curve counts to quantum
  modularity}.
\newblock 2020.
\newblock arXiv:2005.13349.

\bibitem[EKL20a]{EKL2}
Tobias Ekholm, Piotr Kucharski, and Pietro Longhi.
\newblock {Multi-cover skeins, quivers, and 3d {$\mathcal{N} = 2$} dualities}.
\newblock {\em J. High Energy Phys.}, (2):018, 50, 2020.
\newblock arXiv:1910.06193.

\bibitem[EKL20b]{EKL}
Tobias Ekholm, Piotr Kucharski, and Pietro Longhi.
\newblock Physics and geometry of knots-quivers correspondence.
\newblock {\em Comm. Math. Phys.}, 379(2):361--415, 2020.
\newblock arXiv:1811.03110.

\bibitem[FZ99]{FZ}
Dominique Foata and Doron Zeilberger.
\newblock A combinatorial proof of {B}ass's evaluations of the
  {I}hara-{S}elberg zeta function for graphs.
\newblock {\em Trans. Amer. Math. Soc.}, 351(6):2257--2274, 1999.

\bibitem[GHN{\etalchar{+}}20]{GPNPPS}
Sergei Gukov, Po-Shen Hsin, Hiraku Nakajima, Sunghyuk Park, Du~Pei, and Nikita
  Sopenko.
\newblock {Rozansky-Witten geometry of Coulomb branches and logarithmic knot
  invariants}.
\newblock 2020.
\newblock arXiv:2005.05347.

\bibitem[GL05]{GL}
Stavros Garoufalidis and Thang T.~Q. L\^{e}.
\newblock The colored {J}ones function is {$q$}-holonomic.
\newblock {\em Geom. Topol.}, 9:1253--1293, 2005.

\bibitem[GM21]{GM}
Sergei Gukov and Ciprian Manolescu.
\newblock A two-variable series for knot complements.
\newblock {\em Quantum Topol.}, 12(1):1--109, 2021.
\newblock arXiv:1904.06057.

\bibitem[GMP16]{GMP}
Sergei Gukov, Marcos Marino, and Pavel Putrov.
\newblock {Resurgence in complex Chern-Simons theory}.
\newblock 2016.
\newblock arXiv:1605.07615.

\bibitem[GPPV20]{GPPV}
Sergei Gukov, Du~Pei, Pavel Putrov, and Cumrun Vafa.
\newblock B{PS} spectra and 3-manifold invariants.
\newblock {\em J. Knot Theory Ramifications}, 29(2):2040003, 85, 2020.
\newblock arXiv:1701.06567.

\bibitem[GPV17]{GPV}
Sergei Gukov, Pavel Putrov, and Cumrun Vafa.
\newblock Fivebranes and 3-manifold homology.
\newblock {\em J. High Energy Phys.}, 07:071, 2017.
\newblock arXiv:1602.05302.

\bibitem[GS06]{GS}
Stavros Garoufalidis and Xinyu Sun.
\newblock The {$C$}-polynomial of a knot.
\newblock {\em Algebr. Geom. Topol.}, 6:1623--1653, 2006.

\bibitem[Hab00]{Hab0}
Kazuo Habiro.
\newblock On the colored {J}ones polynomials of some simple links.
\newblock Number 1172, pages 34--43. 2000.
\newblock Recent progress towards the volume conjecture (Japanese) (Kyoto,
  2000).

\bibitem[Hab02]{Hab}
Kazuo Habiro.
\newblock On the quantum {$\rm sl_2$} invariants of knots and integral homology
  spheres.
\newblock In {\em Invariants of knots and 3-manifolds ({K}yoto, 2001)},
  volume~4 of {\em Geom. Topol. Monogr.}, pages 55--68. Geom. Topol. Publ.,
  Coventry, 2002.

\bibitem[Hab07]{Hab2}
Kazuo Habiro.
\newblock An integral form of the quantized enveloping algebra of {${\rm
  sl}_2$} and its completions.
\newblock {\em J. Pure Appl. Algebra}, 211(1):265--292, 2007.

\bibitem[IMMM12]{IMMM}
H.~Itoyama, A.~Mironov, A.~Morozov, and And. Morozov.
\newblock {HOMFLY and superpolynomials for figure eight knot in all symmetric
  and antisymmetric representations}.
\newblock {\em Journal of High Energy Physics}, 2012(7), Jul 2012.
\newblock arXiv:1203.5978.

\bibitem[Kno]{KnotAtlas}
{The Knot Atlas}.
\newblock http://katlas.org/.

\bibitem[KRSS17]{KRSS0}
Piotr Kucharski, Markus Reineke, Marko Sto\v{s}i\'{c}, and Piotr Su\l{}kowski.
\newblock {BPS states, knots, and quivers}.
\newblock {\em Physical Review D}, 96(12), Dec 2017.
\newblock arXiv:1707.02991.

\bibitem[KRSS19]{KRSS}
Piotr Kucharski, Markus Reineke, Marko Sto\v{s}i\'{c}, and Piotr Su\l{}kowski.
\newblock Knots-quivers correspondence.
\newblock {\em Adv. Theor. Math. Phys.}, 23(7):1849--1902, 2019.
\newblock arXiv:1707.04017.

\bibitem[Kuc20]{Kuch}
Piotr Kucharski.
\newblock {Quivers for 3-manifolds: the correspondence, {BPS} states, and 3d
  {$\mathcal{N} = 2$} theories}.
\newblock {\em J. High Energy Phys.}, (9):075, 26, 2020.
\newblock arXiv:2005.13394.

\bibitem[LM]{KnotInfo}
C.~{Livingston} and A.~H. {Moore}.
\newblock {KnotInfo: Table of Knot Invariants}.
\newblock http://www.indiana.edu/~knotinfo.

\bibitem[LW01]{LW}
Xiao-Song Lin and Zhenghan Wang.
\newblock Random walk on knot diagrams, colored {J}ones polynomial and
  {I}hara-{S}elberg zeta function.
\newblock In {\em Knots, braids, and mapping class groups---papers dedicated to
  {J}oan {S}. {B}irman ({N}ew {Y}ork, 1998)}, volume~24 of {\em AMS/IP Stud.
  Adv. Math.}, pages 107--121. Amer. Math. Soc., Providence, RI, 2001.

\bibitem[MM95]{MelvinMorton}
P.~M. Melvin and H.~R. Morton.
\newblock The coloured {J}ones function.
\newblock {\em Comm. Math. Phys.}, 169(3):501--520, 1995.

\bibitem[MM21]{MM}
Andrei Mironov and Alexei Morozov.
\newblock {Algebra of quantum $\mathcal{C}$-polynomials}.
\newblock {\em Journal of High Energy Physics}, 2021(2), Feb 2021.
\newblock arXiv:2009.11641.

\bibitem[MMM13]{MMM}
A.~Mironov, A.~Morozov, and And. Morozov.
\newblock {Evolution method and “differential hierarchy” of colored knot
  polynomials}.
\newblock 2013.
\newblock arXiv:1306.3197.

\bibitem[Mur93]{M}
Jun Murakami.
\newblock A state model for the multivariable {A}lexander polynomial.
\newblock {\em Pacific J. Math.}, 157(1):109--135, 1993.

\bibitem[Par20a]{Park0}
Sunghyuk Park.
\newblock {Higher Rank $\hat{Z}$ and $F_K$}.
\newblock {\em Symmetry, Integrability and Geometry: Methods and Applications},
  May 2020.
\newblock arXiv:1909.13002.

\bibitem[Par20b]{Park}
Sunghyuk Park.
\newblock {Large color R-matrix for knot complements and strange identities}.
\newblock {\em Journal of Knot Theory and Its Ramifications}, 29(14):2050097,
  Dec 2020.
\newblock arXiv:2004.02087.

\bibitem[Roz98]{R}
L.~Rozansky.
\newblock The universal {$R$}-matrix, {B}urau representation, and the
  {M}elvin-{M}orton expansion of the colored {J}ones polynomial.
\newblock {\em Adv. Math.}, 134(1):1--31, 1998.
\newblock arXiv:q-alg/9604005.

\bibitem[Sta78]{Stallings}
John~R. Stallings.
\newblock Constructions of fibred knots and links.
\newblock In {\em Algebraic and geometric topology ({P}roc. {S}ympos. {P}ure
  {M}ath., {S}tanford {U}niv., {S}tanford, {C}alif., 1976), {P}art 2}, Proc.
  Sympos. Pure Math., XXXII, pages 55--60. Amer. Math. Soc., Providence, R.I.,
  1978.

\bibitem[Tur02]{Turaev}
Vladimir Turaev.
\newblock {\em Torsions of {$3$}-dimensional manifolds}, volume 208 of {\em
  Progress in Mathematics}.
\newblock Birkh\"{a}user Verlag, Basel, 2002.

\bibitem[Wil20]{Willetts}
Sonny Willetts.
\newblock {A unification of the ADO and colored Jones polynomials of a knot}.
\newblock 2020.
\newblock arXiv:2003.09854.

\end{thebibliography}
\bibliographystyle{alpha}

\end{document}